\documentclass[a4paper]{article}
\usepackage[margin=25mm]{geometry}
\usepackage{amsmath}
\usepackage{amsthm}
\usepackage{amssymb}
\usepackage{amsfonts}
\usepackage{latexsym}
\usepackage{mathtools}
\usepackage{array}
\usepackage{bm}
\usepackage{comment} 
\usepackage{authblk}

\usepackage{tikz}
\usetikzlibrary{arrows.meta, decorations.pathreplacing, positioning, calc, shapes.geometric, shapes, backgrounds}

\newtheorem{prop}{Proposition}[section]
\newtheorem{lem}[prop]{Lemma}
\newtheorem{thm}[prop]{Theorem}
\newtheorem{cor}[prop]{Corollary}

\theoremstyle{definition}
\newtheorem{dfn}[prop]{Definition}

\newtheorem{state}[prop]{Statement}

\numberwithin{prop}{section}
\numberwithin{equation}{section}

\newenvironment{lemproof}{
  
  \begin{proof}
}{
  \end{proof}
}

\title{Bisimulations in second-order arithmetic}
\author{Yuto Takeda, Keita Yokoyama}
\affil{Mathematical Institute, Tohoku University, Japan}

\usepackage[T1]{fontenc}

\begin{document}
\maketitle
%
%\titlerunning{Abbreviated paper title}
% If the paper title is too long for the running head, you can set
% an abbreviated paper title here
%

%
%\authorrunning{Y. Takeda et al.}
% First names are abbreviated in the running head.
% If there are more than two authors, 'et al.' is used.
%
%\institute{Tohoku University, Sendai, Miyagi, JPN\and
%\email{sho.shimomichi.s8@dc.tohoku.ac.jp}\and
%\email{yuto.takeda.t8@dc.tohoku.ac.jp}\and
%\email{keita.yokoyama.c2@dc.tohoku.ac.jp}}
%

%
\begin{abstract}
This paper investigates the logical strength of two theorems in modal propositional logic—the Hennessy–Milner theorem and the van Benthem characterization theorem—within the framework of second-order arithmetic. We demonstrate that the Hennessy–Milner theorem is equivalent to $\mathrm{ACA}_0$ over $\mathrm{RCA}_0$. For the van Benthem characterization theorem, we introduce three variants: the semantic, syntactic, and hybrid forms. We show that the semantic form is provable in $\mathrm{RCA}_0$, the syntactic form is provable in $\mathrm{PRA}$, and the hybrid form is equivalent to the weak completeness theorem for first-order logic over $\mathrm{RCA}_0$.
%In this paper, we investigate the strength of completeness theorems for modal propositional logic in second-order arithmetic. We show that the weak completeness theorem for modal propositional logic is provable in $\mathrm{RCA}_0$ and that $\mathrm{ACA}_0$ is equivalent over $\mathrm{RCA}_0$ to the strong completeness theorem for modal propositional logic. Moreover, we define a weak form of the strong completeness theorem, and we show that $\mathrm{WKL}_0$ is equivalent to this weak form.
%\keywords{second-order arithmetic \and reverse mathematics \and completeness theorems \and modal logic}
\end{abstract}
\section{Introduction}
%この論文では, 二階算術において, 様相論理における双模倣性の分析を行う. 特に, 様相論理において重要な定理である, Hennessy-Milner theoremとvan Benthem characterization theoremについて逆数学の視点から分析を行う. 双模倣とは, Kripkeモデルの世界の間の関係であり, 命題の真偽を保存し, 到達可能性関係を両方向から模倣するものである. 本論文では, $\mathrm{RCA}_0$上で, Hennessy-Milner theoremが体系$\mathrm{ACA}_0$と同値であることを示した。この結果は証明論における一結果にとどまらず、Hennessy-Milner theoremの十分条件であるimage-finitenessと可能世界間おける様相同値性のみでは, 計算可能な手法でクリプキモデル間の双模倣関係を構築することは（一般には）できないということを示しており, これは計算可能性理論の視点においても重要な結果であるといえる. またvan Benthem characterization theoremに関して、この主張が$\mathrm{RCA}_0$上$\mathrm{WKL}_0$で証明可能であることを示し、更なる詳細な分析を経て、原始再帰的関数を取り扱うだけの性能がある公理系$\mathrm{PRA}$においても証明が可能であることを示した. これはさらに弱い体系においてもこの定理が証明可能であることを示唆している.
In this paper, we investigate bisimulations for modal logic in second-order arithmetic. In particular, we analyze two fundamental theorems in modal logic—the Hennessy–Milner theorem~\cite{MR832336} and the van Benthem characterization theorem~\cite{van2014modal}—from the perspective of reverse mathematics. Several studies have analyzed logic within second-order arithmetic. For example, over $\mathrm{RCA}_0$, Simpson~\cite{MR2517689} showed that G\"odel’s completeness theorem is equivalent to $\mathrm{WKL}_0$ and Yamazaki~\cite{MR2010178} showed that the strong completeness theorem for intuitionistic predicate logic is equivalent to $\mathrm{ACA}_0$. %In \cite{MR2517689}, Simpson studied the relationships between several well-known theorems of classical logic in second-order arithmetic and showed that, over $\mathrm{RCA}_0$, G\"odel's completeness theorem is equivalent to $\mathrm{WKL}_0$. Similar studies have been conducted on intuitionistic logic, and in \cite{MR2010178}, Yamazaki showed that, over $\mathrm{RCA}_0$, the strong completeness theorem for intuitionistic predicate logic is equivalent to $\mathrm{ACA}_0$. 
For modal logic, we previously showed that the weak completeness theorem for propositional modal logic is provable in $\mathrm{RCA}_0$. We then showed that proving the strong completeness theorem via canonical models requires $\mathrm{ACA}_0$, whereas the strong completeness theorem itself is equivalent to $\mathrm{WKL}_0$ over $\mathrm{RCA}_0$~\cite{10.1007/978-3-031-95908-0_32}.%A bisimulation is a relation between worlds in Kripke models that preserves the truth of propositional variables and mirrors the accessibility relation in both directions.

We show that, over $\mathrm{RCA}_0$, the Hennessy–Milner theorem is equivalent to the system $\mathrm{ACA}_0$. This result is not merely a proof-theoretic observation: it also implies that, in general, the assumption of image-finiteness alone is insufficient to effectively construct a bisimulation between modally equivalent Kripke models. Hence, this equivalence is also significant from the viewpoint of computability theory. 

Regarding the van Benthem characterization theorem, we introduce and analyze three variants: the semantic, syntactic, and hybrid forms. First, we show that the semantic form is provable in $\mathrm{RCA}_0$. While van Benthem's original proof~\cite{van2014modal} relies on ultrafilter extensions—which are inherently difficult to handle in second-order arithmetic—we demonstrate that Otto's elementary approach \cite{Otto2004b, MR2258711} can be successfully refined within $\mathrm{RCA}_0$. Second, a closer analysis reveals that the syntactic form is provable even in $\mathrm{PRA}$. This not only suggests that the characterization theorem may hold in even weaker systems, but also implies that a primitive recursive process can effectively yield a characteristic formula from a given proof of bisimulation invariance, contributing a fresh insight to computability theory. Finally, we establish that the hybrid form is equivalent over $\mathrm{RCA}_0$ to the weak completeness theorem for first-order logic. By shifting to the hybrid setting, we illuminate a novel connection that yields an equivalence to a system strictly weaker than $\mathrm{WKL}_0$.

%Regarding the van Benthem characterization theorem, we introduce three variants: the semantic, syntactic, and hybrid forms. First, we prove that the semantic form is provable in $\mathrm{RCA}_0$. Van Benthem's original proof~\cite{van2014modal} used ultrafilter extensions, which are inherently difficult to handle within second-order arithmetic. In contrast, Otto's elementary proof \cite{Otto2004b, MR2258711} does not use ultrafilters, and the present result demonstrates that his approach can be refined within $\mathrm{RCA}_0$. Moreover, through a more detailed analysis, we show that the syntactic form is provable even in $\mathrm{PRA}$, a system strong enough to formalize reasoning about primitive recursive functions. This suggests that the van Benthem characterization theorem may be provable in even weaker systems. Furthermore, this result demonstrates that if there is a \emph{proof} of the bisimulation invariance of first-order logic formulas, a characteristic formula can be obtained via a primitive recursive process. This contributes a valuable insight to computability theory. Last, we show the hybrid form is equivalent to the weak completeness theorem for first-order logic over $\mathrm{RCA}_0$. While the relationship between the bisimulation invariance and the unsatisfiability of first-order formulas is well known, reconsidering this connection in a hybrid form has made it possible to establish an equivalence to the weak completeness theorem for first-order logic, which is strictly weaker than $\mathrm{WKL}_0$.

In Section 2, we show that the Hennessy-Milner theorem is equivalent to $\mathrm{ACA}_0$ over $\mathrm{RCA}_0$. In Section 3, we consider three variants of the van Benthem characterization theorem. First, we show that the semantic form is provable in $\mathrm{RCA}_0$. Specifically, we formalize the elementary proof of the theorem by Otto~\cite{Otto2004b, MR2258711} within $\mathrm{RCA}_0$. Moreover, we demonstrate that the syntactic form is provable in $\mathrm{PRA}$, and show that the hybrid form is equivalent to the weak completeness theorem for first-order logic. For the formalization of modal logic over $\mathrm{RCA}_0$, we refer to~\cite{10.1007/978-3-031-95908-0_32}.

%\subsection{Framework}
It uses the framework of subsystems of second-order arithmetic, with base theory $\mathrm{RCA}_0$. $\mathrm{RCA}_0$ consists of the axioms of Robinson arithmetic, together with the $\Sigma^0_1$-induction scheme and the $\Delta^0_1$-comprehension scheme. $\mathrm{WKL}_0$ consists of the axioms of $\mathrm{RCA}_0$ together with a formalization of weak K\"onig's lemma. $\mathrm{ACA}_0$ consists of the axioms of $\mathrm{RCA}_0$ together with the comprehension scheme for all arithmetic formulas. $\mathrm{PRA}$ is formulated in a language containing a symbol for each primitive recursive function. Its axioms consist of the usual axioms for equality, the induction scheme for quantifier-free formulas, and the defining equations of primitive recursive functions. It is known that $\mathrm{WKL}_0$ is a $\Pi^0_2$-conservative extension of $\mathrm{PRA}$ (see~\cite{MR2517689}). 

%\subsection{Bisimulations for modal logic}

\renewcommand{\thefootnote}{\fnsymbol{footnote}}
\section{The Hennessy-Milner theorem in second-order arithmetic}
In this section, we prove that the Hennessy-Milner theorem is equivalent to $\mathrm{ACA}_0$ over $\mathrm{RCA}_0$.  As usual, we identify finite sequences of symbols of the language for modal logic with natural numbers. 

Let $\mathrm{Prop}=\{p,q,r,\dots\}$ be an infinite set of propositional variables. In $\mathrm{RCA}_0$, the set of all atomic formulas, $\mathrm{Atm}=\mathrm{Prop}\cup\{\bot\}$, and the set of all modal formulas, $\mathrm{Fml}$, exist as the sets of natural numbers. 

We next formalize the Kripke semantics.
\begin{dfn}[$\mathrm{RCA}_0$~\cite{10.1007/978-3-031-95908-0_32}]
A \emph{Kripke model} is a tuple $M=(W, R, V)$ satisfying the following conditions:
\begin{itemize}
    \item $W\subseteq\mathbb{N}$ is a non-empty set,
    \item $R$ is a binary relation on $W$, i.e., $R\subseteq W\times W$,
    \item %$V$ is a function which assigns a truth value to each pair of a propositional formula and an element of $W$, i.e., 
$V : W\times\mathrm{Fml} \rightarrow\{0, 1\}$,
    \item $V(w, \bot)=0$ for any $w\in W$, 
    \item $V(w, \varphi\rightarrow\psi)=1-V(w, \varphi)(1-V(w, \psi))$ for any $w\in W$ and any $\varphi, \psi\in\mathrm{Fml}$, 
    \item $V(w, \Box\varphi)=1\iff\forall v\in W(wRv\rightarrow V(v, \varphi)=1)$ for any $w\in W$ and any $\varphi\in\mathrm{Fml}$.
\end{itemize}
A pair $(W,R)$ is called a \emph{frame}, and a function $V$ is called a \emph{valuation} on $(W,R)$. A \emph{pointed model} is a pair $(M, w)$ where $w\in W$. 
\end{dfn}

We also use the following notations.
Let $F$ be a frame, $M$ be a Kripke model, $w\in W$, and $\varphi\in\mathrm{Fml}$, we define
\begin{align*}
&M, w\Vdash\varphi\equiv V(w, \varphi)=1, \quad \quad M\Vdash\varphi\equiv(\forall w\in W)(V(w, \varphi)=1),\\
&F\Vdash\varphi\equiv(\forall V\text{: valuation on }F)(\forall w\in W)(V(w, \varphi)=1).
\end{align*}

In the above definition, the valuation $V$ should cover the truth values of all formulas. Within $\mathrm{RCA}_{0}$, one cannot extend a valuation on atomic formulas to a full valuation.

\begin{prop}[$\mathrm{RCA}_0$~\cite{MR2517689}]\label{acaeqijf}
$\mathrm{ACA}_0$ is equivalent to the statement that for each injective function $f:\mathbb{N}\rightarrow\mathbb{N}$, there exists a set $A$ such that $\forall x(x\in A\leftrightarrow\exists y(f(y)=x))$.
\end{prop}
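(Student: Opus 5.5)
The plan is to prove the two directions separately, working in $\mathrm{RCA}_0$. The forward direction is immediate. Given an injective $f$, the formula $\exists y(f(y)=x)$ is $\Sigma^0_1$, hence arithmetical. Arithmetical comprehension in $\mathrm{ACA}_0$ then yields the set $A$ directly.

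For the converse, assume that ranges of injective functions exist. The aim is to derive $\Sigma^0_1$-comprehension and then iterate.

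\textbf{Step 1: ranges of arbitrary functions.} Let $g:\mathbb{N}\to\mathbb{N}$ be arbitrary. Define $f(n)=2g(n)+1$ if $g(n)\neq g(m)$ for all $m<n$, and $f(n)=2n$ otherwise. This $f$ is $\Delta^0_1$-definable, since it uses only bounded quantification over $g$. It is injective:
\begin{itemize}
\item distinct odd values come from first occurrences of distinct $g$-values;
\item even values $2n$ are distinct for distinct $n$;
\item odd and even values never coincide.
\end{itemize}
Taking $A$ the range of $f$, we get $x\in\mathrm{rng}(g)\iff 2x+1\in A$. So the range of $g$ exists by $\Delta^0_1$-comprehension.

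\textbf{Step 2: $\Sigma^0_1$-comprehension.} Consider a $\Sigma^0_1$ formula $\exists y\,\theta(x,y)$ with $\theta$ bounded (parameters allowed). Define $g(\langle x,y\rangle)=x+1$ if $\theta(x,y)$ holds, and $g(\langle x,y\rangle)=0$ otherwise. Then $\{x:\exists y\,\theta(x,y)\}=\{x: x+1\in\mathrm{rng}(g)\}$, which exists by Step 1.

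\textbf{Step 3: all arithmetical formulas.} This is the standard argument that $\Sigma^0_1$-comprehension implies arithmetical comprehension, by induction on the complexity of the formula in the metatheory. For $\Sigma^0_{n+1}$ we use the $\Sigma^0_n$ set already obtained, relative to the parameters, and note that $\Sigma^0_1$-comprehension holds relative to any set parameter.

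The only points needing care are the injectivity bookkeeping in Step 1 and checking that every function used is given by a $\Delta^0_1$ definition, so that it exists in $\mathrm{RCA}_0$. I expect no real obstacle; this is a routine reverse-mathematics coding argument.
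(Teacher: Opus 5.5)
Your proof is correct. It is essentially the standard argument that the paper cites from \cite{MR2517689} without reproducing: you turn a $\Sigma^0_1$ formula into an injective function whose range encodes it, which gives $\Sigma^0_1$-comprehension with parameters, and then you climb the arithmetical hierarchy by induction on formula complexity. The only differences are that you route the coding through the intermediate statement ``ranges of arbitrary functions exist'' rather than doing it in one step, and in Step 2 you should set $g(m)=0$ when $m$ is not in the image of the (non-surjective) pairing function.
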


\begin{state}\label{pextend}
For each frame $F=(W, R)$ and $v:W\times\mathrm{Prop}\rightarrow2$, there exists a valuation $V:W\times\mathrm{Fml}\rightarrow2$ such that $M=(W, R, V)$ is a Kripke model and $V$ is an extension of $v$.
\end{state}

\begin{prop}[$\mathrm{RCA}_0$]
Statement~\ref{pextend} implies $\mathrm{ACA}_0$.
\end{prop}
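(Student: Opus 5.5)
We derive $\mathrm{ACA}_0$ from Statement~\ref{pextend} by verifying the criterion of Proposition~\ref{acaeqijf}. Fix an injective $f:\mathbb{N}\rightarrow\mathbb{N}$; we must produce the range of $f$. The idea is to build a computable frame in which, for each $x$, a designated world $a_x$ sees a world where a fixed propositional variable $p$ is true exactly when $x$ is in the range of $f$. The truth value of $\Diamond p$ at $a_x$ then encodes $\exists y\,(f(y)=x)$, and a full valuation extending the atomic one yields the range by $\Delta^0_1$-comprehension.

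For the construction, let $W$ be the set of codes of worlds $a_x$ and $b_{x,y}$ for $x,y\in\mathbb{N}$, say $a_x=\langle 0,x\rangle$ and $b_{x,y}=\langle 1,x,y\rangle$. Set $a_x R b_{x,y}$ for all $x,y$, with no other edges. Define $v(b_{x,y},p)=1$ iff $f(y)=x$, and let every other pair $(w,q)$ take value $0$. All of these are $\Delta^0_1$ in $f$, so $W$, $R$ and $v$ exist in $\mathrm{RCA}_0$. (Injectivity of $f$ is not needed here, but it does no harm.)

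Apply Statement~\ref{pextend} to obtain a valuation $V$ on $(W,R)$ extending $v$. Write $\Diamond p$ as $\neg\Box\neg p$, i.e.\ $(\Box(p\rightarrow\bot))\rightarrow\bot$.

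\textbf{Step 1.} From the clauses for $\bot$ and $\rightarrow$, we have $V(w,p\rightarrow\bot)=1-V(w,p)$ for every $w$.

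\textbf{Step 2.} By the $\Box$ clause, $V(a_x,\Box(p\rightarrow\bot))=1$ iff every $R$-successor $b_{x,y}$ has $V(b_{x,y},p)=0$, that is, iff $\forall y\,(f(y)\neq x)$.

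\textbf{Step 3.} Applying the $\rightarrow$ and $\bot$ clauses once more gives $V(a_x,\Diamond p)=1\iff\exists y\,(f(y)=x)$.

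Now let $A=\{x: V(a_x,\Diamond p)=1\}$. This set exists by $\Delta^0_1$-comprehension relative to $V$, and it is exactly the range of $f$. Proposition~\ref{acaeqijf} then gives $\mathrm{ACA}_0$.

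There is no real obstacle here. The only point needing care is that all the Kripke model clauses are used pointwise, applied to the finitely many fixed formulas $p$, $\bot$, $p\rightarrow\bot$, $\Box(p\rightarrow\bot)$ and $\Diamond p$. No induction on formulas is involved, so everything goes through in $\mathrm{RCA}_0$.
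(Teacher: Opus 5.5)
Your proof is correct and follows the same strategy as the paper. Both reduce to Proposition~\ref{acaeqijf} and read off the range of $f$ from the truth values of a modal-depth-one $\Diamond$-formula under the full valuation supplied by Statement~\ref{pextend}. The only difference is the gadget. The paper takes the universal frame $W=\mathbb{N}$, $R=W\times W$ with infinitely many atoms, sets $v(w,p_i)=1\iff f(w)=i$, and uses $I=\{n\mid V(n,\Diamond p_n)=1\}$. Your disjoint stars instead let a single fixed formula $\Diamond p$ in one atom do the job. So your version shows the extension principle already yields $\mathrm{ACA}_0$ when restricted to one propositional variable, while the paper's shows it already for the simplest (universal) frame.
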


\begin{proof}
By Proposition~\ref{acaeqijf}, it suffices to show that for an injective function $f$, the range of $f$ exists. We take an injective function $f:\mathbb{N}\rightarrow\mathbb{N}$. Let $\mathrm{Prop}=\{p_{i}\}_{i\in\mathbb{N}}$, $W=\mathbb{N}$ and $R=W\times W$. Then we define $v:W\times\mathrm{Prop}\rightarrow2$ by the following: for all $p_{i}\in\mathrm{Prop}$ and $w\in W$, $v(w, p_{i})=1\Leftrightarrow f(w)=i$. By the assumption, there exists a full valuation $V: W\times\mathrm{Fml}\rightarrow2$ such that $M=(W, R, V)$ is a model and $V$ is an extension of $v$. Then we define $I=\{w\in W\mid V(w, \Diamond p_{w})=1\}$. $I$ is the range of $f$. In fact, for all $n$, $\exists m(f(m)=n)\Leftrightarrow\exists m(v(m, p_{n})=1)\Leftrightarrow V(n, \Diamond p_{n})=1\Leftrightarrow n\in I$.
\end{proof}

\begin{comment}
\begin{dfn}[$\mathrm{RCA}_0$]
Let $F=(W, R)$ be a frame.
\begin{itemize}
    \item $F$ is \emph{appropriate to $T$} if $R$ is reflexive,
    \item $F$ is \emph{appropriate to $B$} if $R$ is symmetric,
    \item $F$ is \emph{appropriate to $4$} if $R$ is transitive,
    \item $F$ is \emph{appropriate to $5$} if $R$ is Euclidean,
    \item $F$ is \emph{appropriate to $D$} if $R$ is serial,
    \item $F$ is \emph{appropriate to $.2$} if $R$ is directed,
    \item $F$ is \emph{appropriate to $L$} if $R$ is transitive and $W$ has no infinite ascending sequences by $R$.
\end{itemize}

In general, $F$ is \emph{appropriate to $\mathrm{\Sigma}\subseteq\{T, B, 4, 5, D, .2, L\}$} if for all $\varphi\in\mathrm{\Sigma}$, $F$ is appropriate to $\varphi$.
\end{dfn}
\end{comment}

Given a frame $F$, a set of formulas $\mathrm{\Gamma}$, and $\varphi\in\mathrm{Fml}$, we let
\begin{align*}
F\Vdash\mathrm{\Gamma}&\equiv(\forall\psi\in\mathrm{\Gamma})(F\Vdash\psi),\\
\mathrm{\Gamma}\Vdash\varphi&\equiv(\forall F\text{: frame and }\forall V\text{: valuation on }F)((F, V)\Vdash\mathrm{\Gamma}\rightarrow (F, V)\Vdash\varphi).
\end{align*}

\begin{dfn}[$\mathrm{RCA}_0$]
Let $M=(W, R, V)$ be a Kripke model. 
\begin{itemize}
    \item $M$ is image-finite if for all $w\in W$, $wR=\{v\in W\mid wRv\}$ is a finite set.
    \item $M$ is modally saturated if for every $w\in W$ and set of modal formulas $\Gamma$, the following condition holds: if $M, w\Vdash\Diamond\wedge\Gamma_{0}$ for all finite $\Gamma_{0}\subseteq\Gamma$, then there exists $u\in wR$ such that $M, u\Vdash\Gamma$.
\end{itemize}
\end{dfn}

\begin{dfn}[$\mathrm{RCA}_0$]
Let $M=(W, R, V)$ and $M^{\prime}=(W^{\prime}, R^{\prime}, V^{\prime})$ be two models. Two pointed models $(M, w)$ and $(M^{\prime}, w^{\prime})$ are modally equivalent (we denote $(M, w)\leftrightsquigarrow (M^{\prime}, w^{\prime})$) if for all $\varphi\in\mathrm{Fml}$, $V(w, \varphi)=1$ if and only if $V^{\prime}(w^{\prime}, \varphi)=1$.
\end{dfn}

\begin{dfn}[$\mathrm{RCA}_0$]
Let $M=(W, R, V)$ and $M^{\prime}=(W^{\prime}, R^{\prime}, V^{\prime})$ be two models. A non-empty binary relation $Z\subseteq W\times W^{\prime}$ is called a bisimulation between $M$ and $M^{\prime}$ (we denote $Z : M\,\underline{\xleftrightarrow{}}\,M^{\prime}$) if the following conditions are satisfied:
\begin{itemize}
    \item $\forall w\forall w^{\prime}(wZw^{\prime}\rightarrow\forall p\in\mathrm{Prop} (V(w, p)=1\leftrightarrow V^{\prime}(w^{\prime}, p)=1))$.
    \item $\forall w\forall w^{\prime}\forall v(wZw^{\prime}\land wRv\rightarrow\exists v^{\prime}(vZv^{\prime}\land w^{\prime}R^{\prime}v^{\prime}))$ (the forth condition).
    \item $\forall w\forall w^{\prime}\forall v^{\prime}(wZw^{\prime}\land w^{\prime}R^{\prime}v^{\prime}\rightarrow\exists v(vZv^{\prime}\land wRv))$ (the back condition).
\end{itemize}
Two pointed models $(M, w)$ and $(M^{\prime}, w^{\prime})$ are bisimilar (we denote $(M, w)\,\underline{\xleftrightarrow{}}\,(M^{\prime}, w^{\prime})$) if there exists a bisimulation $Z : M\,\underline{\xleftrightarrow{}}\,M^{\prime}$ such that $wZw^{\prime}$.
\end{dfn}

\begin{prop}[$\mathrm{RCA}_0$]\label{bisimimmeq}
For any two pointed models $(M, w)$ and $(M^{\prime}, w^{\prime})$, $(M, w)\,\underline{\xleftrightarrow{}}\,(M^{\prime}, w^{\prime})$ implies $(M, w)\leftrightsquigarrow (M^{\prime}, w^{\prime})$.
\end{prop}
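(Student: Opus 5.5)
The natural plan is induction on the complexity of formulas, carried out carefully so that the induction is available in $\mathrm{RCA}_0$. Fix a bisimulation $Z : M\,\underline{\xleftrightarrow{}}\,M^{\prime}$. Instead of arguing only about the pair $(w,w^{\prime})$, I would prove the stronger uniform claim that for every $\varphi\in\mathrm{Fml}$,
\[
\forall u\,\forall u^{\prime}\bigl(uZu^{\prime}\rightarrow (V(u,\varphi)=1\leftrightarrow V^{\prime}(u^{\prime},\varphi)=1)\bigr),
\]
and then specialize to $u=w$, $u^{\prime}=w^{\prime}$. The order of steps is: first check that this statement has an appropriate logical form; then verify the base case; then verify the two inductive steps.

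The base case and the step for $\rightarrow$ are immediate. For $p\in\mathrm{Prop}$, the first clause of the definition of bisimulation gives the equivalence directly. For $\bot$, both values are $0$. For $\varphi\rightarrow\psi$, the defining equation of a valuation computes $V(u,\varphi\rightarrow\psi)$ from $V(u,\varphi)$ and $V(u,\psi)$ by the same arithmetic expression as $V^{\prime}(u^{\prime},\varphi\rightarrow\psi)$ from the corresponding values. So the induction hypothesis for $\varphi$ and $\psi$ at the same pair $(u,u^{\prime})$ suffices.

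The step for $\Box\varphi$ is where the back and forth conditions are used. Suppose $uZu^{\prime}$ and $V(u,\Box\varphi)=1$, and let $u^{\prime}R^{\prime}v^{\prime}$. The back condition gives $v$ with $uRv$ and $vZv^{\prime}$. Then $V(v,\varphi)=1$, and the induction hypothesis applied to the pair $(v,v^{\prime})$ gives $V^{\prime}(v^{\prime},\varphi)=1$. Hence $V^{\prime}(u^{\prime},\Box\varphi)=1$. The converse direction is symmetric, using the forth condition.

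The main obstacle is the formal status of the induction. The statement being inducted on contains unbounded universal quantifiers over $u,u^{\prime}$. Since $V,V^{\prime},Z$ are sets given as parameters, the matrix is $\Delta^0_0$ relative to them, so for fixed $\varphi$ the whole statement is $\Pi^0_1$. Induction on the length (or code) of $\varphi$ for a $\Pi^0_1$ formula is available in $\mathrm{RCA}_0$, because $\Pi^0_1$-induction is equivalent to $\Sigma^0_1$-induction. I would make this explicit by taking the least counterexample. Suppose some formula fails the claim. By $\Pi^0_1$-induction in the form of the least number principle for $\Sigma^0_1$ formulas, there is a formula of minimal length that fails. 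Its immediate subformulas then satisfy the claim, and the case analysis above yields a contradiction. This use of induction is essential: the hypothesis is needed at the new pair $(v,v^{\prime})$, not at $(u,u^{\prime})$, which is exactly why the claim must be quantified over all $Z$-related pairs rather than stated only for $(w,w^{\prime})$.
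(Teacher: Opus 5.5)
Your proof is correct and is essentially the paper's argument. The paper also fixes $Z$, proves the uniform statement $\forall w_0\forall w_1(w_0Zw_1\rightarrow(V(w_0,\varphi)=1\leftrightarrow V^{\prime}(w_1,\varphi)=1))$ by induction on $\varphi$ (justified by $\mathrm{I}\Sigma^0_1$), and uses the back and forth conditions in the $\Box$ step; your explicit $\Pi^0_1$ complexity check, least-counterexample formulation, and handling of the $\bot$ and $\rightarrow$ cases just fill in details the paper leaves implicit.
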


\begin{proof}
Let $M=(W, R, V)$ and $M^{\prime}=(W^{\prime}, R^{\prime}, V^{\prime})$ be two models. Fix $w\in W$ and $w^{\prime}\in W^{\prime}$ and assume that $(M, w)\,\underline{\xleftrightarrow{}}\,(M^{\prime}, w^{\prime})$ holds, that is, there exists a bisimulation $Z : M\,\underline{\xleftrightarrow{}}\,M^{\prime}$ such that $wZw^{\prime}$. Then we show the following statement by induction on the length of the formula $\varphi$:
\begin{align*}
\forall w_0\forall w_1(w_0Zw_1\rightarrow(V(w_0, \varphi)=1\leftrightarrow V(w_1, \varphi)=1)).
\end{align*}
This can be shown using $\mathrm{I}\Sigma^0_1$.
\vspace{2mm}

\textit{Base case}: For all $p\in\mathrm{Prop}$, by the definition of the bisimulation, $V(w, p)=1\leftrightarrow V^{\prime}(w^{\prime}, p)=1$ holds.
\vspace{-2mm}

\textit{Induction step}: Take $\varphi\equiv\Box\psi$ and assume that for all $w_0\in W$ and $w_1\in W^{\prime}$ with $w_0Zw_1$, $V(w_0, \psi)=1\leftrightarrow V^{\prime}(w_1, \psi)=1$. Assume that $V(w_0, \Box\psi)=1$ holds. Then for all $v\in w_{0}R$, $V(v,\psi)=1$. Take $v_{1}\in w_{1}R^{\prime}$. By the back condition, there exists $v_0\in w_{0}R$ such that $v_{0}Zv_{1}$ holds. Therefore $V(v_0, \psi)=1$. By the assumption, $V^{\prime}(v_{1}, \psi)=1$, so $V^{\prime}(w_1, \Box\psi)=1$.

Next, we assume that $V^{\prime}(w_1, \Box\psi)=1$. Then for all $v\in w_{1}R^{\prime}$, $V^{\prime}(v, \psi)=1$. Take $v_{0}\in w_{0}R$. By the forth condition, there exists $v_1\in w_{1}R^{\prime}$ such that $v_{0}Zv_{1}$. Therefore $V^{\prime}(v_{1}, \psi)=1$. By the assumption, $V(v_{0}, \psi)=1$, so $V(w_{0}, \Box\psi)=1$.
\vspace{2mm}

By the above, this completes the proof of the forward direction. Therefore $(M, w)\leftrightsquigarrow (M^{\prime}, w^{\prime})$ holds.
\end{proof}

\begin{prop}[The Hennessy-Milner theorem, $\mathrm{ACA}_0$~\cite{MR1837791}]\label{HMoverACA0}
For any two image-finite pointed models $(M, w)$ and $(M^{\prime}, w^{\prime})$, $(M, w)\leftrightsquigarrow (M^{\prime}, w^{\prime})$ if and only if $(M, w)\,\underline{\xleftrightarrow{}}\,(M^{\prime}, w^{\prime})$.
\end{prop}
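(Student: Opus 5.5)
The direction from bisimilarity to modal equivalence is Proposition~\ref{bisimimmeq}, which already holds in $\mathrm{RCA}_0$. It therefore remains to prove the converse in $\mathrm{ACA}_0$. The plan is to take modal equivalence itself as the bisimulation. Using arithmetical comprehension, we form
\begin{align*}
Z=\{(u,u^{\prime})\in W\times W^{\prime}\mid \forall\varphi\in\mathrm{Fml}\,(V(u,\varphi)=1\leftrightarrow V^{\prime}(u^{\prime},\varphi)=1)\}.
\end{align*}
This is a $\Pi^0_1$ condition in the parameters $V,V^{\prime}$, so $Z$ exists in $\mathrm{ACA}_0$ (even $\Pi^0_1$-comprehension suffices). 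By hypothesis $wZw^{\prime}$, so $Z$ is non-empty. The atomic clause holds trivially, since each $p\in\mathrm{Prop}$ is a formula.

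The main step is the forth condition; the back condition is symmetric. Suppose $uZu^{\prime}$ and $uRv$, and assume for contradiction that no $v^{\prime}\in u^{\prime}R^{\prime}$ satisfies $vZv^{\prime}$. Image-finiteness gives that $u^{\prime}R^{\prime}$ is finite, say $\{v^{\prime}_0,\dots,v^{\prime}_{k-1}\}$. In $\mathrm{ACA}_0$ we can obtain this finite set as a coded finite sequence, because membership in $u^{\prime}R^{\prime}$ is decidable in $R^{\prime}$, and a code for a finite set exists via bounded $\Sigma^0_1$-comprehension.

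\begin{itemize}
\item If $k=0$, then $V^{\prime}(u^{\prime},\Box\bot)=1$ while $V(u,\Box\bot)=0$ since $uRv$. This contradicts $uZu^{\prime}$.
\item If $k>0$, then for each $i<k$ there is a formula $\varphi$ with $V(v,\varphi)=1$ and $V^{\prime}(v^{\prime}_i,\varphi)=0$, after replacing $\varphi$ by its negation if necessary.
\end{itemize}

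We need to choose such a $\varphi_i$ uniformly for all $i<k$ and conjoin them. For this we apply bounded $\Sigma^0_1$ choice, or take for each $i$ the least witness. The search is legitimate because the set of witnesses for each $i$ is $\Delta^0_0$ in $V,V^{\prime}$, so the least-witness function on $i<k$ is obtained by minimization, and the finite sequence $\langle\varphi_i\rangle_{i<k}$ exists by $\mathrm{B}\Sigma^0_1$/$\mathrm{I}\Sigma^0_1$. Let $\psi=\bigwedge_{i<k}\varphi_i$, encoded by the usual abbreviations. Then $V(v,\psi)=1$, so $V(u,\Diamond\psi)=1$.

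We also need $V^{\prime}(u^{\prime},\Diamond\psi)=0$. This holds because every $R^{\prime}$-successor of $u^{\prime}$ is some $v^{\prime}_i$, and each $v^{\prime}_i$ refutes its conjunct $\varphi_i$. Here we must verify that $V^{\prime}$ evaluates a finite conjunction correctly, which is a routine $\Sigma^0_1$/$\Pi^0_1$-induction on the length of the conjunction (indeed $\Delta^0_0$ in $V^{\prime}$). The result contradicts $uZu^{\prime}$.

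The only delicate point is the uniform choice of the distinguishing formulas and the finite-conjunction bookkeeping inside weak induction; in $\mathrm{ACA}_0$ this is unproblematic, since the full arithmetical induction available for arithmetical formulas covers it. The only genuine use of $\mathrm{ACA}_0$ is the existence of $Z$.
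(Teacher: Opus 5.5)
Your proposal is correct and follows essentially the same route as the paper. Both define $Z$ as the modal-equivalence relation by comprehension, which is the only use of $\mathrm{ACA}_0$. Both then prove the forth and back conditions by contradiction, using image-finiteness and $\Diamond$ applied to a finite conjunction of distinguishing formulas. Your treatment of the empty-successor case via $\Box\bot$ (where the paper uses $\Diamond\top$) and your explicit least-witness choice of the $\varphi_i$ simply fill in details the paper leaves implicit.
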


\begin{proof}
By Proposition~\ref{bisimimmeq}, it is enough to show that for any two image-finite pointed models $(M, w)$ and $(M^{\prime}, w^{\prime})$, $(M, w)\leftrightsquigarrow (M^{\prime}, w^{\prime})$ implies $(M, w)\,\underline{\xleftrightarrow{}}\,(M^{\prime}, w^{\prime})$.

Let $M=(W, R, V)$ and $M^{\prime}=(W^{\prime}, R^{\prime}, V^{\prime})$ be two image-finite models. We assume that $(M, w)\leftrightsquigarrow (M^{\prime}, w^{\prime})$ holds. Then we define the following binary relation $Z\subseteq W\times W^{\prime}$ by $\Sigma^0_1$-comprehension:
\begin{align*}
Z=\{(w_0, w_1)\in W\times W^{\prime}\mid (M, w_{0})\leftrightsquigarrow (M^{\prime}, w_{1})\}.
\end{align*}
Note that $(M, w_{0})\leftrightsquigarrow (M^{\prime}, w_{1})$ is expressed by a $\Pi^0_1$-sentence. Then we show that $Z$ is a bisimulation between $M$ and $M^{\prime}$.
\vspace{2mm}

\textit{Base condition}: Take $(w_0, w_1)\in Z$ and $p\in\mathrm{Prop}$. By definition, $V(w_0, p)=1\leftrightarrow V^{\prime}(w_{1}, p)=1$ holds.
\vspace{2mm}

\textit{Forth condition}: We show the forth condition. Take $(w_0, w_1)\in Z$ and $v_{0}\in w_{0}R$. Towards a contradiction, assume that there is no $v_{1}$ such that $w_{1}R^{\prime}v_{1}$ and $v_{0}Zv_{1}$. By $V(w_{0}, \Diamond\top)=1$ and $w_{0}Zw_{1}$, $V^{\prime}(w_{1}, \Diamond\top)=1$ holds ($\top\equiv\neg\bot$). Therefore $w_{1}R^{\prime}$ is non-empty. Furthermore, as $M^{\prime}$ is image-finite, $w_{1}R^{\prime}$ must be finite, so we write $w_{1}R^{\prime}=\{v^{\prime}_1,\dots,v^{\prime}_n\}$. By the assumption, for every $v^{\prime}_i\in w_{1}R^{\prime}$, there exists $\psi_i\in\mathrm{Fml}$ such that $V(v_0, \psi_i)=1$ and $V^{\prime}(v^{\prime}_i, \psi_i)=0$. It follows that
\begin{align*}
V(w_0, \Diamond(\psi_1\land\dots\land\psi_n))=1\land V^{\prime}(w_1, \Diamond(\psi_1\land\dots\land\psi_n))=0.
\end{align*}
This contradicts our assumption that $w_{0}Zw_{1}$.
\vspace{2mm}

The back condition can be proved similarly to the forth condition. Therefore, $Z$ is a bisimulation. By definition, $wZw^{\prime}$, so $(M, w)\,\underline{\xleftrightarrow{}}\,(M^{\prime}, w^{\prime})$ holds.
\end{proof}

In fact, the Hennessy-Milner theorem for modally saturated models is also provable in $\mathrm{ACA}_0$.

\begin{prop}[The Hennessy-Milner theorem for modally saturated models, $\mathrm{ACA}_0$~\cite{MR1837791}]\label{HMMSoverACA0}
For any two modally saturated pointed models $(M, w)$ and $(M^{\prime}, w^{\prime})$, $(M, w)\leftrightsquigarrow (M^{\prime}, w^{\prime})$ if and only if $(M, w)\,\underline{\xleftrightarrow{}}\,(M^{\prime}, w^{\prime})$.
\end{prop}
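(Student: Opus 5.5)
The backward direction is Proposition~\ref{bisimimmeq}, so the work is to show that modal equivalence implies bisimilarity for modally saturated models. I would follow the proof of Proposition~\ref{HMoverACA0}. By arithmetical comprehension (indeed $\Pi^0_1$-comprehension suffices), form
\begin{align*}
Z=\{(w_0, w_1)\in W\times W^{\prime}\mid (M, w_{0})\leftrightsquigarrow (M^{\prime}, w_{1})\}.
\end{align*}
Then $wZw^{\prime}$ holds by assumption, so $Z$ is non-empty, and the atomic clause of a bisimulation is immediate from the definition of $Z$. It remains to check the forth and back conditions. They are symmetric, so I treat only forth.

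For the forth condition, take $(w_0,w_1)\in Z$ and $v_0\in w_0R$. Using arithmetical comprehension again, form the theory of $v_0$:
\[
\Gamma=\{\psi\in\mathrm{Fml}\mid V(v_0,\psi)=1\},
\]
which is actually $\Delta^0_1$ in $V$. Now let $\Gamma_0\subseteq\Gamma$ be finite. Since $v_0$ witnesses it, $V(w_0,\Diamond\bigwedge\Gamma_0)=1$. Because $w_0Zw_1$, we also get $V^{\prime}(w_1,\Diamond\bigwedge\Gamma_0)=1$. So every finite subset of $\Gamma$ is satisfiable at some successor of $w_1$. Modal saturation of $M^{\prime}$ then gives $u\in w_1R^{\prime}$ with $M^{\prime},u\Vdash\Gamma$.

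It remains to see that $v_0Zu$. For any formula $\varphi$: if $V(v_0,\varphi)=1$, then $\varphi\in\Gamma$, so $V^{\prime}(u,\varphi)=1$. If $V(v_0,\varphi)=0$, then $\neg\varphi\in\Gamma$, so $V^{\prime}(u,\neg\varphi)=1$, hence $V^{\prime}(u,\varphi)=0$ by the clauses for a valuation. Thus $(M,v_0)\leftrightsquigarrow(M^{\prime},u)$, i.e.\ $v_0Zu$. The back condition follows in the same way using saturation of $M$. Hence $Z$ is a bisimulation containing $(w,w^{\prime})$.

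The only delicate point is the formalization. The saturation hypothesis quantifies over \emph{sets} $\Gamma$, so $\Gamma$ must exist as a set; this is fine since it is computable from $V$, even in $\mathrm{RCA}_0$. The only essential use of $\mathrm{ACA}_0$ is forming $Z$, which is $\Pi^0_1$-definable.
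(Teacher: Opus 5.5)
Your proposal is correct and follows essentially the same route as the paper: define $Z$ as the arithmetically definable modal-equivalence relation, then obtain the forth condition by applying modal saturation of $M^{\prime}$ to $\Gamma=\mathrm{Th}_{\mathrm{ML}}(M,v_0)$, with the back condition symmetric. You also make explicit two points the paper leaves implicit, both correctly: that $\Gamma$ is computable from $V$, and that satisfying $\Gamma$ gives full modal equivalence because $\neg\varphi\in\Gamma$ whenever $V(v_0,\varphi)=0$.
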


\begin{proof}
By Proposition~\ref{bisimimmeq}, it is enough to show that for any two modally saturated pointed models $(M, w)$ and $(M^{\prime}, w^{\prime})$, $(M, w)\leftrightsquigarrow (M^{\prime}, w^{\prime})$ implies $(M, w)\,\underline{\xleftrightarrow{}}\,(M^{\prime}, w^{\prime})$.

Let $M=(W, R, V)$ and $M^{\prime}=(W^{\prime}, R^{\prime}, V^{\prime})$ be modally saturated models. Then we define the following binary relation $Z\subseteq W\times W^{\prime}$ by $\Sigma^0_1$-comprehension:
\begin{align*}
Z=\{(w_0, w_1)\in W\times W^{\prime}\mid (M, w_{0})\leftrightsquigarrow (M^{\prime}, w_{1})\}.
\end{align*}

Then we show that $Z$ is a bisimulation between $M$ and $M^{\prime}$.
\vspace{2mm}

\textit{Base condition}: the atomic condition is obvious. %take $(w_0, w_1)\in Z$ and $p\in\mathrm{Prop}$. By the definition, $V(w_0, p)=1\leftrightarrow V^{\prime}(w_{1}, p)=1$ holds.
\vspace{2mm}

\textit{Forth condition}: we show the forth condition. Take $(w_0, w_1)\in Z$ and $v_{0}\in w_{0}R$. Let $\Gamma=\mathrm{Th}_{\mathrm{ML}}(M, v_{0})=\{\varphi\in\mathrm{Fml}\mid V(v_{0}, \varphi)=1\}$. For finite $\Gamma_{0}\subseteq\Gamma$, $V(w_{0}, \Diamond\wedge\Gamma_{0})=1$ holds, so $V^{\prime}(w_{1}, \Diamond\wedge\Gamma_{0})=1$  also holds. By modal saturation, there exists $v\in w_{1}R^{\prime}$ such that $M^{\prime}, v\Vdash\Gamma$. Then for all $\varphi\in\mathrm{Fml}$, $V(v_{0},\varphi)=1\leftrightarrow V^{\prime}(v,\varphi)=1$. Thus $v_{0}Zv$ holds.
\vspace{2mm}

The back condition can be proved similarly to the forth condition. Therefore, $Z$ is a bisimulation. By definition, $wZw^{\prime}$, so $(M, w)\,\underline{\xleftrightarrow{}}\,(M^{\prime}, w^{\prime})$ holds.
\end{proof}

\begin{thm}\label{HMT}
The following statements are equivalent over $\mathrm{RCA}_0$:
\begin{enumerate}
    \item $\mathrm{ACA}_0$,
    \item the Hennessy-Milner theorem for modally saturated models,
    \item the Hennessy-Milner theorem.
\end{enumerate}
\end{thm}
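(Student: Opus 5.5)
The implications (1)$\Rightarrow$(3) and (1)$\Rightarrow$(2) are Propositions~\ref{HMoverACA0} and~\ref{HMMSoverACA0}. For the reversals I plan to prove (3)$\Rightarrow$(1) and (2)$\Rightarrow$(1) with one coding construction, using Proposition~\ref{acaeqijf}. Fix an injective $f:\mathbb{N}\to\mathbb{N}$. I will build in $\mathrm{RCA}_0$ two image-finite models $M$, $M'$ with points $w,w'$ such that three things hold. First, the valuations $V,V'$ on \emph{all} formulas are $\Delta^0_1$-definable, so that $M,M'$ exist as Kripke models in the sense of the paper. Second, $(M,w)\leftrightsquigarrow(M',w')$ is provable in $\mathrm{RCA}_0$. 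Third, from any bisimulation $Z$ with $wZw'$ the range of $f$ is $\Delta^0_1$ in $Z$, so it exists by $\Delta^0_1$-comprehension.

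The models will be built from gadgets strung along an infinite spine $w=c_0\,R\,c_1\,R\,c_2\cdots$ in $M$ and $w'=c'_0\,R'\,c'_1\cdots$ in $M'$, so that every node has finitely many successors. At $c_n$ I hang finitely many chains using fresh propositional letters. One is a chain $A_n$ whose $k$-th node is marked by a letter $q$ exactly when $f(k)=n$; this is a $\Delta^0_1$ condition, since we only check $f(k)=n$ at a given $k$. Another is an unmarked chain $B$. In $M'$ the chains at $c'_n$ are arranged with an asymmetry, using an auxiliary letter $r$ at the heads. The aim is that the forth condition at the pair $(c_n,c'_n)$ can only be met by relating the head of $A_n$ to the head of the unmarked chain $B'$ when $n\notin\mathrm{ran}(f)$. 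When $n\in\mathrm{ran}(f)$, the marked head must instead go to a correspondingly marked copy, whose marker is also placed at the $k$ with $f(k)=n$, so it is computable from $f$. The spine forces $c_nZc'_n$ for all $n$ by induction along the spine, using the forth and back conditions and distinct letters on the spine nodes. Then $n\notin\mathrm{ran}(f)\leftrightarrow (\mathrm{head}(A_n),\mathrm{head}(B'))\in Z$, while $n\in\mathrm{ran}(f)\leftrightarrow\exists k\,f(k)=n$. So both the range and its complement are $\Sigma^0_1$ in $Z$, and the range exists.

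To make the full valuations computable, I will prove in $\mathrm{RCA}_0$ that the truth of a formula $\varphi$ of modal depth $d$ at a node depends only on the $d$-neighbourhood of that node. In each gadget this neighbourhood is a finite labelled tree computable uniformly from $f$, so $V$ is obtained by primitive recursion, avoiding Statement~\ref{pextend}. For modal equivalence I will show, by $\Sigma^0_1$ (in fact $\Delta^0_0$ in parameters) induction on $\varphi$, that corresponding nodes agree on $\varphi$. I will compare depth-$d$ neighbourhoods, which agree up to isomorphism because the marked and unmarked chains are matched one-to-one once a marker, if any, is located. This is a finite check for each formula. For (2)$\Rightarrow$(1) I will use the same models and check that they are modally saturated: every node has finitely many successors, so a finitely satisfiable $\Gamma$ of $\Diamond$-formulas is realised at one successor by a pigeonhole argument. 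This argument uses bounded $\Sigma^0_1$ induction and stays in $\mathrm{RCA}_0$. Hence the same coding gives $\mathrm{ACA}_0$.

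The main obstacle is designing the gadget at $c_n$ correctly. The multisets of successor types must be modally equivalent in both cases, $n\in\mathrm{ran}(f)$ and $n\notin\mathrm{ran}(f)$. Yet \emph{every} bisimulation must be forced to commit to a pairing that reveals which case holds. A bisimulation need not be the maximal one, so the gadget must leave a unique legal matching of successors. My first attempt is the $r$-tagging described above: in $M'$ the copy of $A_n$ carries $r$, and $M$ contains an $r$-tagged marked chain with its marker at the same position. The untagged head of $A_n$ is then forced onto $B'$ exactly when it carries no marker. If this fails, I will instead let the length of a chain ending in a dead end encode $k$, and compare against an infinite chain.
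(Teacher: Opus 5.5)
There is a genuine gap, and it is the step you yourself flag as the main obstacle. The gadget at $c_n$ is never fixed, and the way you plan to make $V$ computable rules out every gadget that could work. You require each $d$-neighbourhood to be a finite labelled tree computable uniformly from $f$, so every successor set is a canonical finite set computable from $f$. Then the forth and back conditions contain only bounded quantifiers. Hence the bisimulations $Z$ with $wZw'$ form a $\Pi^0_1(f)$ class, which is nonempty in the standard model because it contains the modal-equivalence relation. By the low basis theorem this class has a member $Z$ with $(Z\oplus f)'\le_T f'$. For a computable injection $f$ with range $\emptyset'$, such a $Z$ cannot compute $\mathrm{ran}(f)$, so no construction of your kind can support the final $\Delta^0_1$ definition of the range.

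Your $r$-tagging attempt shows this tension concretely. If $f(k)=n$, then $c_n\Vdash\Diamond(\neg r\wedge\Diamond^{k}q)$ through the untagged $A_n$, but the only untagged successor of $c'_n$ is the unmarked $B'$, so modal equivalence fails. Adding an untagged marked copy to $M'$ restores equivalence. But then, for $n\notin\mathrm{ran}(f)$, that copy is unmarked, $\mathrm{head}(A_n)$ may be sent to it instead of to $B'$, and nothing is forced. Your fallback (a dead-end chain compared against an infinite one) hits the same wall unless the infinite partner on the $M'$ side is attached only when $n\in\mathrm{ran}(f)$.

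The missing idea is a successor whose existence depends on an unbounded search, so that image-finiteness holds only non-uniformly. In the paper, $c_jRb_{m,j}$ holds iff $f(m)=j$. The chain from $a_{0,j}$ dies at $a_{m-1,j}$. The chain from $b_{m,j}$ and the $X$-chain (present only in $M$) are infinite, and all three chains carry the letters $p_{0,j},p_{1,j},\dots$ in order. From $c_jZc_j$ the forth condition sends $x_{0,j}$ either to $a_{0,j}$ or to $b_{m,j}$, which gives $\mathrm{ran}(f)=\{j\mid\neg(x_{0,j}Za_{0,j})\}$.

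Computability of $V$ then cannot come from computing neighbourhoods. It comes from Lemma~\ref{A}: $b_{m,j}$ and $a_{0,j}$ agree on all formulas of degree $<m$. So evaluating $\Box\varphi$ at $c_j$ needs only the bounded search $\exists m\le\mathrm{deg}(\varphi)\,f(m)=j$, which is the definition of $G$. The same bounded search drives the proof of $(M,c_j)\leftrightsquigarrow(M',c_j)$ in Lemmas~\ref{1or2} and~\ref{C}.

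The rest of your outline is sound and matches the paper: forcing $c_nZc'_n$ along a spine with distinct letters, and handling (2) through modal saturation of image-finite models.
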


\begin{proof}
$(1)$ implies $(2)$ by Theorem~\ref{HMMSoverACA0}, and $(2)$ implies $(3)$ since every image-finite model is modally saturated. Thus, it is enough to show that the Hennessy-Milner theorem implies $\mathrm{ACA}_0$ over $\mathrm{RCA}_0$.
\vspace{1mm}

By Proposition~\ref{acaeqijf}, we take an injective function $f:\mathbb{N}\rightarrow\mathbb{N}$. Then we show that the range of $f$ exists. First, we define two image-finite models $M=(W, R, V)$ and $M^{\prime}=(W^{\prime}, R^{\prime}, V^{\prime})$. Let $\mathrm{Prop}=\{p_{i, j}\}_{i, j\in\mathbb{N}}\cup\{q_{j}\}_{j\in\mathbb{N}}\cup\{r_{j}\}_{j\in\mathbb{N}}\cup\{p_{*}\}$. Let $A, B, C, D, X, \{*\}$ be sets of symbols defined as $A=\{a_{i, j}\}_{i, j\in\mathbb{N}}$, $B=\{b_{i, j}\}_{i, j\in\mathbb{N}}$, $C=\{c_{j}\}_{j\in\mathbb{N}}$, $D=\{d_{j}\}_{j\in\mathbb{N}}$, and $X=\{x_{i, j}\}_{i, j\in\mathbb{N}}$ where $A, B, C, D, X$ are pairwise distinct. Then we define the sets of worlds $W$ and $W^{\prime}$:
\begin{align*}
W=A\cup B\cup C\cup D\cup X\cup\{*\},\,W^{\prime}=W\setminus X.
\end{align*}
Furthermore, we define  the accessibility relations $R\subseteq W\times W$: for every $w, v\in W$,
\begin{equation*}
\begin{split}
wRv\Leftrightarrow&\,(w=d_{j}\in D\land v=d_{j+1}\in D)\\&\lor(w=*\land v=d_{0}\in D)\\&\lor(w=d_{j}\in D\land v=c_{j}\in C)\\&\lor(w=c_{j}\in C\land v=a_{0, j}\in A\land f(0)\not=j)\\
&\lor(w=c_{j}\in C\land v=b_{i, j}\in B\land f(i)=j)\\%\land \forall k<i(f(k)\not=j))\\
&\lor(w=c_{j}\in C\land v=x_{0, j}\in X)\\
&\lor(w=a_{i, j}\in A\land v=a_{i+1, j}\in A\land f(i+1)\not=j)\\
&\lor(w=b_{i, j}\in B\land v=b_{i+1, j}\in B\land \exists m\leq i(f(m)=j))\\
&\lor(w=x_{i, j}\in X\land v=x_{i+1, j}\in X).\\
\end{split}
\end{equation*}
And we define $R^{\prime}=R\restriction(W^{\prime}\times W^{\prime})$. Moreover, we define the valuation for propositions $v:W\times\mathrm{Prop}\rightarrow 2$: for every $w\in W$ and $p\in\mathrm{Prop}$,
\begin{equation*}
\begin{split}
v(w, p)=1\Leftrightarrow&\,(w=a_{i, j}\in A\land\forall m\leq i(f(m)\not=j)\land p=p_{i, j})\\
&\lor(w=x_{i, j}\in X\land p=p_{i, j})\\
&\lor(w=b_{k, j}\in B\land\exists m\leq k(f(m)=j\land m+i=k)\land p=p_{i, j})\\
&\lor(w=c_{j}\in C\land p=q_{j})\\
&\lor(w=d_{j}\in D\land p=r_{j})\\
&\lor(w=*\land p=p_{*}).
\end{split}
\end{equation*}

%For example, $M^{\prime}$ is the model as follows:
As an example, consider the model $M^{\prime}$ defined below:
\begin{center}
\begin{minipage}{1.0\textwidth}
\centering
%%%%%%%%%%%%%%%%%%%%%%
%%  右　の　図      %%
%%  （今は左図を複製）
%%%%%%%%%%%%%%%%%%%%%%
\begin{tikzpicture}[>=stealth, scale=1.2, transform shape]
\tikzset{
  dot/.style={circle, fill, inner sep=1.8pt},
  vert/.style={->, thick},
  horiz/.style={->, thick}
}

% --- 基底ノードの x 座標 ---
\def\xA{0}
\def\xB{1.2}
\def\xC{2.4}
\def\xD{3.6}
\def\xE{4.8}

% --- 基底ノード ---
\node[dot] (b0) at (\xA,0) {};
\node[dot] (b1) at (\xB,0) {};
\node[dot] (b2) at (\xC,0) {};
\node[dot] (b3) at (\xD,0) {};
\node[dot] (b4) at (\xE,0) {};

% --- 基底ノードの下にノード追加 ---
\foreach \i/\x in {0/\xA,1/\xB,2/\xC,3/\xD,4/\xE}{
  \node[dot, label=below:$d_\i$] (bu\i) at (\x,-0.6) {};
  \draw[vert] (bu\i) -- (b\i);
}

% --- 基底ノードの下の水平基底線（区間ごとの矢印） ---
\draw[horiz] (bu0) -- (bu1);
\draw[horiz] (bu1) -- (bu2);
\draw[horiz] (bu2) -- (bu3);
\draw[horiz] (bu3) -- (bu4);
\draw[horiz] (bu4) -- ++(0.6,0) node[right] {$\cdots$};

\draw[<-, thick] (bu0) -- ++(-0.8,0);
\node at (-1.05,-0.6) {$*$};

% --- 水平基底線（区間ごとの矢印） ---
%\draw[horiz] (b0) -- (b1);
%\draw[horiz] (b1) -- (b2);
%\draw[horiz] (b2) -- (b3);
%\draw[horiz] (b3) -- (b4);
%\draw[horiz] (b4) -- ++(0.6,0) node[right] {$\cdots$};

% 左の incoming (p)
%\draw[<-, thick] (b0) -- ++(-0.8,0);
%\node at (-1.05,0) {$*$};

% --- 各基底から左右にフォーク ---
\foreach \i/\x in {0/\xA,1/\xB,2/\xC,3/\xD,4/\xE}{
  \pgfmathsetmacro{\xl}{\x-0.35}
  \pgfmathsetmacro{\xr}{\x+0.35}
  \node[dot] (l\i1) at (\xl,0.6) {};
  \node[dot] (r\i1) at (\xr,0.6) {};
  %\draw[vert] (b\i) -- (l\i1);
%  \draw[vert] (b\i) -- (r\i1);

  \foreach \j in {2,3}{
    \pgfmathsetmacro{\y}{0.6 + 0.8*(\j-1)}
    \node[dot] (l\i\j) at (\xl,\y) {};
    \node[dot] (r\i\j) at (\xr,\y) {};
%    \draw[vert] (l\i\the\numexpr\j-1\relax) -- (l\i\j);
%    \draw[vert] (r\i\the\numexpr\j-1\relax) -- (r\i\j);
  }
  \draw[densely dotted] (l\i3) -- ++(0,0.5);
  \draw[densely dotted] (r\i3) -- ++(0,0.5);
}

\draw[vert] (b0) -- (l01);
\draw[vert] (b1) -- (l11);
%\draw[vert] (b2) -- (l21);
\draw[vert] (b3) -- (l31);
\draw[vert] (b4) -- (l41);
%%%%%%%%%%%%%%%%%%%%%%%%%%%%%%%%%%%%%%%
%\draw[vert] (b0) -- (r01);
\draw[vert] (b1) -- (r11);
\draw[vert] (b2) -- (r21);
\draw[vert] (b3) -- (r31);

%\draw[vert] (r01) -- (r02);
%\draw[vert] (r02) -- (r03);

\draw[vert] (r11) -- (r12);
\draw[vert] (r12) -- (r13);

\draw[vert] (r21) -- (r22);
\draw[vert] (r22) -- (r23);

\draw[vert] (r31) -- (r32);
\draw[vert] (r32) -- (r33);
%%%%%%%%%%%%%%%%%%%%%%%%%%%%%%%%%%%%%%%%%%%%%%%%%%

\draw[vert] (l0\the\numexpr1\relax) -- (l02);
\draw[vert] (l0\the\numexpr1\relax) -- (l03);
%\draw[vert] (l1\the\numexpr1\relax) -- (l13);
\draw[vert] (l2\the\numexpr2\relax) -- (l23);
\draw[vert] (l3\the\numexpr1\relax) -- (l32);
%\draw[vert] (l3\the\numexpr2\relax) -- (l33);
\draw[vert] (l4\the\numexpr1\relax) -- (l42);
\draw[vert] (l4\the\numexpr2\relax) -- (l43);

\draw[vert] (l12) -- (l13);
\draw[vert] (l21) -- (l22);

\node at (0,0) {};

\end{tikzpicture}
\end{minipage}
\end{center}
Here, each arrow represents the accessibility relation, and each node represents a world. We assume that each node satisfies a unique propositional variable. The left branches of the forks belong to $A$, while the right branches belong to $B$. From this construction, it follows that $\forall x(f(x)\not=0)$, $\forall x(f(x)\not=4)$, $f(1)=1$, $f(0)=2$, and $f(2)=3$.
%Each arrow is the accessibility relation, and each node is the world. Each node satisfies unique propositional variables. The left sides of forks are in $A$, and the right sides of forks are in $B$. Then we find that $\not\exists x(f(x)=0)$, $\not\exists x(f(x)=4)$, $f(1)=1$, $f(0)=2$, and $f(2)=3$.

Also, $M$ is the following model:
\begin{center}
\begin{minipage}{1.0\textwidth}
\centering
%%%%%%%%%%%%%%%%%%%%%%
%%  左　の　図      %%
%%%%%%%%%%%%%%%%%%%%%%
\begin{tikzpicture}[>=stealth, scale=1.2, transform shape]

\tikzset{
  dot/.style={circle, fill, inner sep=1.8pt},
  vert/.style={->, thick},
  horiz/.style={->, thick}
}

% --- 基底ノードの x 座標 ---
\def\xA{0}
\def\xB{1.2}
\def\xC{2.4}
\def\xD{3.6}
\def\xE{4.8}

% --- 基底ノード ---
\node[dot] (b0) at (\xA,0) {};
\node[dot] (b1) at (\xB,0) {};
\node[dot] (b2) at (\xC,0) {};
\node[dot] (b3) at (\xD,0) {};
\node[dot] (b4) at (\xE,0) {};

% --- 基底ノードの下にノード追加 ---
\foreach \i/\x in {0/\xA,1/\xB,2/\xC,3/\xD,4/\xE}{
  \node[dot, label=below:$d_\i$] (bu\i) at (\x,-0.6) {};
  \draw[vert] (bu\i) -- (b\i);
}

% --- 基底ノードの下の水平基底線（区間ごとの矢印） ---
\draw[horiz] (bu0) -- (bu1);
\draw[horiz] (bu1) -- (bu2);
\draw[horiz] (bu2) -- (bu3);
\draw[horiz] (bu3) -- (bu4);
\draw[horiz] (bu4) -- ++(0.6,0) node[right] {$\cdots$};

\draw[<-, thick] (bu0) -- ++(-0.8,0);
\node at (-1.05,-0.6) {$*$};

% --- 各基底からの構造 ---
\foreach \i/\x in {0/\xA,1/\xB,2/\xC,3/\xD,4/\xE}{
  \pgfmathsetmacro{\xl}{\x-0.35}
  \pgfmathsetmacro{\xr}{\x+0.35}
  \node[dot] (l\i1) at (\xl,0.6) {};
  \node[dot] (r\i1) at (\xr,0.6) {};
%  \draw[vert] (b\i) -- (r\i1);

  \node[dot] (c\i) at (\x,0.6) {};
  \draw[vert] (b\i) -- (c\i);

  \foreach \j in {2,3}{
    \pgfmathsetmacro{\y}{0.6 + 0.8*(\j-1)}
    \node[dot] (l\i\j) at (\xl,\y) {};
    \node[dot] (r\i\j) at (\xr,\y) {};
    \node[dot] (c\i\j) at (\x,\y) {};
%    \draw[vert] (r\i\the\numexpr\j-1\relax) -- (r\i\j);
  }
  \draw[densely dotted] (l\i3) -- ++(0,0.5);
  \draw[densely dotted] (r\i3) -- ++(0,0.5);
  \draw[densely dotted] (c\i3) -- ++(0,0.5);
}

\draw[vert] (b0) -- (l01);
\draw[vert] (b1) -- (l11);
%\draw[vert] (b2) -- (l21);
\draw[vert] (b3) -- (l31);
\draw[vert] (b4) -- (l41);

\draw[vert] (c0) -- (c02);
\draw[vert] (c02) -- (c03);

\draw[vert] (c1) -- (c12);
\draw[vert] (c12) -- (c13);

\draw[vert] (c2) -- (c22);
\draw[vert] (c2) -- (c23);

\draw[vert] (c3) -- (c32);
\draw[vert] (c3) -- (c33);

\draw[vert] (c4) -- (c42);
\draw[vert] (c4) -- (c43);
%%%%%%%%%%%%%%%%%%%%%%%%%%%%%%%%%%%%%%%
%\draw[vert] (b0) -- (r01);
\draw[vert] (b1) -- (r11);
\draw[vert] (b2) -- (r21);
\draw[vert] (b3) -- (r31);

%\draw[vert] (r01) -- (r02);
%\draw[vert] (r02) -- (r03);

\draw[vert] (r11) -- (r12);
\draw[vert] (r12) -- (r13);

\draw[vert] (r21) -- (r22);
\draw[vert] (r22) -- (r23);

\draw[vert] (r31) -- (r32);
\draw[vert] (r32) -- (r33);
%%%%%%%%%%%%%%%%%%%%%%%%%%%%%%%%%%%%%%%%%%%%%%%%%%

%%%%%%%%%%%%%%%%%%%%%%%%%%%%%%%%%%%%%%%%%%%%%%%%%
\draw[vert] (l0\the\numexpr1\relax) -- (l02);
\draw[vert] (l0\the\numexpr1\relax) -- (l03);
%\draw[vert] (l1\the\numexpr1\relax) -- (l13);
\draw[vert] (l2\the\numexpr2\relax) -- (l23);
\draw[vert] (l3\the\numexpr1\relax) -- (l32);
\draw[vert] (l4\the\numexpr1\relax) -- (l42);
\draw[vert] (l4\the\numexpr2\relax) -- (l43);

\draw[vert] (l12) -- (l13);
\draw[vert] (l21) -- (l22);

\end{tikzpicture}
\end{minipage}
\end{center}
Then every central path (consisting of nodes in $X$) is an infinite $R$-path.

In general, within $\mathrm{RCA}_{0}$, one cannot extend a valuation on atoms to a valuation on all formulas. But in this case, we can extend $v$ to the full valuation $V: W\times\mathrm{Fml}\rightarrow2$ step by step using only $\mathrm{I}\Sigma^0_1$ (that is, without the arithmetical comprehension axiom, $\mathrm{ACA}$). We demonstrate this below.

First, we define the following valuations
\begin{align*}
&F:(A\cup B\cup X)\times\mathrm{Fml}\rightarrow2,
&&G:(A\cup B\cup X\cup C)\times\mathrm{Fml}\rightarrow2\\
&H:(A\cup B\cup X\cup C\cup D)\times\mathrm{Fml}\rightarrow2,
&&V:W\times\mathrm{Fml}\rightarrow2
\end{align*}
such that $V$ is an extension of $H$, $H$ is an extension of $G$, and $G$ is an extension of $F$.
\vspace{2mm}

We write $R_{A\cup B\cup X}, R_{A\cup B\cup X\cup C}$, and $R_{A\cup B\cup X\cup C \cup D}$ for the restrictions of the relation $R$ to $A\cup B\cup X, A\cup B\cup X\cup C$, and $A\cup B\cup X\cup C \cup D$, respectively.
\vspace{2mm}

We define the function $F:(A\cup B\cup X)\times\mathrm{Fml}\rightarrow2$ by induction on the length of the formula. For all $w\in A\cup B\cup X$ and $p\in\mathrm{Prop}$, $F(w, p)\coloneq v(w, p)$. Next, fix $w\in A\cup B\cup X$ and $\varphi\in\mathrm{Fml}$. Assume that for all $w\in A\cup B\cup X$, $F(w, \varphi)$ is defined. Then
\begin{equation*}
\begin{split}
F(w, \Box\varphi)=1\Leftrightarrow&\,(w=a_{i, j}\in A\land f(i+1)\not=j\rightarrow F(a_{i+1, j}, \varphi)=1))\\
&\land(w=b_{i, j}\in B\land\exists m\leq i(f(m)=j)\rightarrow F(b_{i+1, j}, \varphi)=1)\\
&\land(w=x_{i, j}\in X\rightarrow F(x_{i+1, j}, \varphi)=1).
\end{split}
\end{equation*}

We can define such an $F$ using $\mathrm{I}\Sigma^0_1$. Next, we should check the following lemma.

\begin{lem}[$\mathrm{RCA}_0$]
The $F$ constructed above is a valuation for $(A\cup B\cup X,R_{A\cup B\cup X})$, that is, $F$ satisfies the following conditions:
\begin{itemize}
    \item $F(w, \bot)=0$ for any $w\in A\cup B\cup X$, 
    \item $F(w, \varphi\rightarrow\psi)=1-F(w, \varphi)(1-F(w, \psi))$ for any $w\in A\cup B\cup X$ and any $\varphi, \psi\in\mathrm{Fml}$, 
    \item $F(w, \Box\varphi)=1\iff\forall w^{\prime}\in A\cup B\cup X(wRw^{\prime}\rightarrow F(w^{\prime}, \varphi)=1)$ for any $w\in A\cup B\cup X$ and any $\varphi\in\mathrm{Fml}$.
\end{itemize}
\end{lem}

\begin{lemproof}
It is enough to check the last condition. Fix $w\in A\cup B\cup X$ and $\varphi\in\mathrm{Fml}$. It suffices to consider the case of the form $\Box\varphi$.
\vspace{2mm}

First, assume that $F(w, \Box\varphi)=1$. Take $w^{\prime}\in A\cup B\cup X$ with $wRw^{\prime}$. Consider the case of $w=a_{i, j}\in A$. If $a_{i, j}$ has a successor of $R$, its successor is only $a_{i+1, j}$ (by the definition of $R$). Therefore $w^{\prime}=a_{i+1, j}$. And if $f(i+1)\not=j$, then $a_{i, j}Ra_{i+1, j}$, otherwise $\neg(a_{i, j}Ra_{i+1, j})$. So, $f(i+1)\not=j$ holds. Therefore, $F(w^{\prime}, \varphi)=1$ holds. For the case of $w=b_{i, j}\in B$ and $w=x_{i, j}\in X$, we also show that $F(w^{\prime}, \varphi)=1$ holds. 
\vspace{2mm}

Next, assume that $F(w, \Box\varphi)=0$. If $w=a_{i, j}\in A\land f(i+1)\not=j$, then $a_{i, j}Ra_{i+1, j}$.  If $w=b_{i, j}\in B\land\exists m\leq i(f(m)=j)$, then $b_{i, j}Rb_{i+1, j}$. If $w=x_{i, j}\in X$, then $x_{i, j}Rx_{i+1, j}$. So, there exists $w^{\prime}\in A\cup B\cup X$ with $wRw^{\prime}$ such that $F(w^{\prime}, \varphi)=0$.
\end{lemproof}

Next, we define the function $G$ by induction on the length of the formula. For $\varphi\in\mathrm{Fml}$, $\mathrm{deg}(\varphi)$ is the number of modal operators in $\varphi$. For all $w\in A\cup B\cup X\cup C$ and $p\in\mathrm{Prop}$, $G(w, p)\coloneq v(w, p)$. Next, fix $w\in A\cup B\cup X\cup C$ and $\varphi\in\mathrm{Fml}$. Then
\begin{equation*}
\begin{split}
G(w, \Box\varphi)=1\Leftrightarrow&\,(w\notin C\rightarrow F(w, \Box\varphi)=1)\\
&\land(w=c_{j}\in C\land f(0)=j\rightarrow F(b_{0, j}, \varphi)=1\land F(x_{0, j}, \varphi)=1)\\
&\land\bigl(w=c_{j}\in C\land(0<\exists m\leq\mathrm{deg}(\varphi)f(m)=j)\\
&\rightarrow F(a_{0, j}, \varphi)=1\land F(b_{m, j}, \varphi)=1\land F(x_{0, j}, \varphi)=1\bigr)\\
&\land(w=c_{j}\in C\land(\forall m\leq\mathrm{deg}(\varphi)f(m)\not=j)\rightarrow F(a_{0, j}, \varphi)=1\land F(x_{0, j}, \varphi)=1).
\end{split}
\end{equation*}

Clearly, $G$ is an extension of $F$, and we can define such a $G$ using $\mathrm{I}\Sigma^0_1$. 

To show that $G$ is a valuation for $(A\cup B\cup X\cup C,R_{A\cup B\cup X\cup C})$, we show the following lemma.

\begin{lem}[$\mathrm{RCA}_0$]\label{A}
Assume that $\exists s>k(f(s)=j)$. Then for all $\varphi\in\mathrm{Fml}$ with $\mathrm{deg}(\varphi)\leq s-1$, $F(a_{0, j}, \varphi)=1\leftrightarrow F(b_{s, j}, \varphi)=1$ holds.
\end{lem}

\begin{lemproof}
Take $\varphi\in\mathrm{Fml}$ with $\mathrm{deg}(\varphi)\leq s-1$. We show the following statement by induction on $l\leq k$.
\begin{align*}
\forall l\leq s-1\forall\psi\in\mathrm{Fml}(\mathrm{deg}(\psi)\leq l\rightarrow(F(a_{s-1-l, j}, \psi)=1\leftrightarrow F(b_{2s-1-l, j}, \psi)=1).
\end{align*}

\textit{Base case}: For all $p\in\mathrm{Prop}$, $(F(a_{s-1, j}, p)=1\leftrightarrow F(b_{2s-1, j}, p)=1)$ holds by the definition of $v$. 
\vspace{2mm}

\textit{Induction step}: Fix $l<s-1$ and assume that for all $\psi$ with $\mathrm{deg}(\psi)\leq l$, $F(a_{s-1-l, j}, \psi)=1\leftrightarrow F(b_{2s-1-l, j}, \psi)=1$ holds. Take $\theta=\Box\psi$ with $\mathrm{deg}(\theta)\leq l+1$. Assume that $F(a_{s-1-(l+1), j}, \theta)=1$. Take $b\in b_{2s-1-(l+1), j}R$. Then $b=b_{2s-1-l, j}$. By the assumption and $F(a_{s-1-l, j}, \psi)=1$, $F(b_{2s-1-l, j}, \psi)=1$ holds. Therefore $F(b_{2s-1-(l+1), j}, \psi)=1$. Conversely, if we assume $F(b_{2s-1-(l+1), j}, \theta)=1$, then we deduce $F(a_{s-1-(l+1), j}, \theta)=1$. By induction on $l\leq k$, $F(a_{0, j}, \varphi)=1\leftrightarrow F(b_{s, j}, \varphi)=1$.
\end{lemproof}

\begin{lem}[$\mathrm{RCA}_0$]
$G$ is a valuation for $(A\cup B\cup X\cup C,R_{A\cup B\cup X\cup C})$.
\end{lem}

\begin{lemproof}
It is enough to check the condition for $\Box\varphi$. Fix $w\in A\cup B\cup X\cup C$ and $\varphi\in\mathrm{Fml}$. Without loss of generality, we may assume that $w=c_{j}\in C$ and $\mathrm{deg}(\varphi)=k$. 
\vspace{2mm}

First, we assume that $G(w, \Box\varphi)=1$ holds. Take $w^{\prime}\in wR$.
\begin{itemize}
    \item Case 1: $f(0)=j$. Then $wR=\{b_{0, j}, x_{0, j}\}$ and $F(b_{0, j}, \varphi)=1\land F(x_{0, j}, \varphi)=1$. So, $G(w^{\prime}, \varphi)=1$.
    \item Case 2: $0<\exists m\leq k(f(m)=j)$. Then $wR=\{a_{0, j}, b_{m, j}, x_{0, j}\}$ and $F(a_{0, j}, \varphi)=1\land F(b_{m, j}, \varphi)=1\land F(x_{0, j}, \varphi)=1$. So, $G(w^{\prime}, \varphi)=1$.
    \item Case 3: $\forall m\leq k(f(m)\not=j)$. If $\exists s>k(f(s)=j)$, then $wR=\{a_{0, j}, b_{s, j}, x_{0, j}\}$ and $F(a_{0, j}, \varphi)=1\land F(x_{0, j}, \varphi)=1$. By Lemma~\ref{A}, $F(a_{0, j}, \varphi)=1\leftrightarrow F(b_{s, j}, \varphi)=1$ holds. So, $G(w^{\prime}, \varphi)=1$. If $\forall s>k(f(s)\not=j)$, then $wR=\{a_{0, j}, x_{0, j}\}$. So, $G(w^{\prime}, \varphi)=1$.
\end{itemize}
\vspace{1mm}

Next, we assume that $G(w, \Box\varphi)=0$. 
\begin{itemize}
    \item Case 1: $f(0)=j$. Then $wR=\{b_{0, j}, x_{0, j}\}$ and $F(b_{0, j}, \varphi)=0\lor F(x_{0, j}, \varphi)=0$. So, there exists $w^{\prime}\in wR$ such that $G(w^{\prime}, \varphi)=0$.
    \item Case 2: $0<\exists m\leq k(f(m)=j)$. Then $wR=\{a_{0, j}, b_{m, j}, x_{0, j}\}$ and $F(a_{0, j}, \varphi)=0\lor F(b_{m, j}, \varphi)=0\lor F(x_{0, j}, \varphi)=0$. So, there exists $w^{\prime}\in wR$ such that $G(w^{\prime}, \varphi)=0$.
    \item Case 3: $\forall m\leq k(f(m)\not=j)$. If $\exists s>k(f(s)=j)$, then $wR=\{a_{0, j}, b_{s, j}, x_{0, j}\}$ and $F(a_{0, j}, \varphi)=0\lor F(x_{0, j}, \varphi)=0$. So, there exists $w^{\prime}\in wR$ such that $G(w^{\prime}, \varphi)=0$. If $\forall s>k(f(s)\not=j)$, then $wR=\{a_{0, j}, x_{0, j}\}$. So, there exists $w^{\prime}\in wR$ such that $G(w^{\prime}, \varphi)=0$.
\end{itemize}
\end{lemproof}

Next, we define the function $H$ by induction on the length of the formula. For all $w\in A\cup B\cup X\cup C\cup D$ and $p\in\mathrm{Prop}$, $H(w, p)\coloneq v(w, p)$. Next, fix $w\in A\cup B\cup X\cup C\cup D$ and $\varphi\in\mathrm{Fml}$. Assume that for all $w\in A\cup B\cup X\cup C\cup D$, $H(w, \varphi)$ is defined. Then
\begin{equation*}
\begin{split}
H(w, \Box\varphi)=1\Leftrightarrow\,(w\notin D\rightarrow G(w, \Box\varphi)=1)\land(w=d_{j}\in D\rightarrow H(d_{j+1}, \varphi)=1\land G(c_{j}, \varphi)=1).
\end{split}
\end{equation*}

Clearly, $H$ is an extension of $G$, and we can define such an $H$ using $\mathrm{I}\Sigma^0_1$.

\begin{lem}[$\mathrm{RCA}_0$]\label{H}
$H$ is a valuation for $(A\cup B\cup X\cup C\cup D,R_{A\cup B\cup X\cup C\cup D})$.
\end{lem}

\begin{lemproof}
It is enough to check the condition for $\Box\varphi$. Fix $w\in A\cup B\cup X\cup C\cup D$ and $\varphi\in\mathrm{Fml}$. Without loss of generality, we may assume that $w=d_{j}\in D$. 
\vspace{2mm}

First, we assume that $H(w, \Box\varphi)=1$ holds. Take $w^{\prime}\in wR=\{d_{j+1}, c_{j}\}$. By definition, $H(w^{\prime}, \varphi)=1$. Next, we assume that $H(w, \Box\varphi)=0$. By definition, there exists $w^{\prime}\in wR$ such that $H(w^{\prime}, \varphi)=0$. 
\end{lemproof}

Next, we define the function $V$ by induction on the length of the formula. For all $w\in W$ and $p\in\mathrm{Prop}$, $V(w, p)\coloneq v(w, p)$. Next, fix $w\in W$ and $\varphi\in\mathrm{Fml}$. Then
\begin{equation*}
\begin{split}
V(w,\Box\varphi)=1\Leftrightarrow\,(w\not=*\rightarrow H(w, \Box\varphi)=1)\land(w=*\rightarrow H(d_{0}, \varphi)=1).
\end{split}
\end{equation*}

Clearly, $V$ is an extension of $H$, and we can define such a $V$ using $\mathrm{I}\Sigma^0_1$. 

\begin{lem}[$\mathrm{RCA}_0$]
$V$ is a valuation for $(W,R)$.
\end{lem}

\begin{lemproof}
By Lemma~\ref{H}.
\end{lemproof}

Then we define $V^{\prime}:W^{\prime}\times\mathrm{Fml}\rightarrow2$ by $V^{\prime}=V\restriction(W^{\prime}\times\mathrm{Fml})$, and define two models by $M=(W, R, V)$ and $M^{\prime}=(W^{\prime}, R^{\prime}, V^{\prime})$. Next, we show that for all $j$, $(M, c_{j})\leftrightsquigarrow (M^{\prime}, c_{j})$ and $(M, d_{j})\leftrightsquigarrow (M^{\prime}, d_{j})$ hold. Clearly, for every $i$ and $ j$, $(M, a_{i, j})\leftrightsquigarrow (M^{\prime}, a_{i, j})$ and $(M, b_{i, j})\leftrightsquigarrow (M^{\prime}, b_{i, j})$ hold. 

\begin{lem}[$\mathrm{RCA}_0$]\label{1or2}
Fix $j\in\mathbb{N}$. Let $\varphi\in\mathrm{Fml}$ with $\mathrm{deg}(\varphi)=k$. Then the following statements hold: 
\begin{enumerate}
    \item If $\forall m\leq k(f(m)\not=j)$ holds, then $V(x_{0, j}, \varphi)=1\leftrightarrow V(a_{0, j}, \varphi)=1$,
    \item If $\exists m\leq k(f(m)=j)$ holds, then $\exists l\leq k((f(l)=j)\land(V(x_{0, j}, \varphi)=1\leftrightarrow V(b_{l, j}, \varphi)=1))$.
\end{enumerate}
%Moreover, we can find whether (1) is true or not recursively. 
%If (1) is not true, then we can find (the least) $l$ such that (2) is true recursively.
\end{lem}
\begin{lemproof}
(1): We assume that $\forall m\leq k(f(m)\not=j)$ holds. Then for all $s<k$, $a_{s, j}Ra_{s+1, j}$. 

We show that the following statement by induction on the length of the formula $\psi$:
\begin{align*}
\forall s\leq k\forall\psi\in\mathrm{Fml}(\mathrm{deg}(\psi)=s\rightarrow(V(x_{k-s, j}, \psi)=1\leftrightarrow V(a_{k-s, j}, \psi)=1)).
\end{align*}
By $\mathrm{I}\Sigma^0_1$, we can show it.
\begin{itemize}
    \item \textit{Base case}: For all $p\in\mathrm{Prop}$, $V(x_{k, j}, p)=1\leftrightarrow V(a_{k, j}, p)=1$ holds by the definition of $v$. 
    \item \textit{Induction step}: Fix $s<k$, and assume that for all $\psi$ with $\mathrm{deg}(\psi)=s$, $V(x_{k-s, j}, \psi)=1\leftrightarrow V(a_{k-s, j}, \psi)=1$ holds. Then take $\theta\equiv\Box\psi$. Assume that $V(a_{k-(s+1), j}, \theta)=1$. Take $x\in x_{k-(s+1), j}R$, then $x=x_{k-s, j}$. By the assumption and $V(a_{k-s, j}, \psi)=1$, $V(x_{k-s, j}, \psi)=1$ holds. Therefore $V(x_{k-(s+1), j}, \theta)=1$ holds. Conversely, if we assume $V(x_{k-(s+1), j}, \theta)=1$, then we deduce $V(a_{k-(s+1), j}, \theta)=1$. So, $V(x_{k-(s+1), j}, \theta)=1\leftrightarrow V(a_{k-(s+1), j}, \theta)=1$ holds. Therefore $V(x_{0, j}, \varphi)=1\leftrightarrow V(a_{0, j}, \varphi)=1$ holds. 
\end{itemize}

(2): We assume that $\exists m\leq k(f(m)=j)$ holds. Then we can search for $l\leq k$ such that $f(l)=j$. Then for all $s$, $b_{l+s, j}Rb_{l+(s+1), j}$. By a similar argument to that used for case (1), we can show that $V(x_{0, j}, \varphi)=1\leftrightarrow V(b_{l, j}, \varphi)=1$.
\end{lemproof}

\begin{lem}[$\mathrm{RCA}_0$]\label{C}
For all $j$, $(M, c_{j})\leftrightsquigarrow (M^{\prime}, c_{j})$.
\end{lem}

\begin{lemproof}
Fix $j\in\mathbb{N}$. We show that for all $\varphi\in\mathrm{Fml}$, $V(c_{j}, \varphi)=1\leftrightarrow V^{\prime}(c_{j}, \varphi)=1$ holds by induction on the length of $\varphi$. 
\vspace{2mm}

\textit{Base case}: For all $p\in\mathrm{Prop}$, $V(c_{j}, p)=1\leftrightarrow V^{\prime}(c_{j}, p)=1$ holds by the definition of $V$. 
\vspace{2mm}

\textit{Induction step}: Fix $s$ and assume that for all $\psi$ with $\mathrm{deg}(\psi)=s$, $V(c_{j}, \psi)=1\leftrightarrow V^{\prime}(c_{j}, \psi)=1$. Fix $\theta\equiv\Box\psi$. We assume that $V(c_{j}, \Box\psi)=1$ holds. Then by the definition of $R^{\prime}$ and the assumption, $V^{\prime}(c_{j}, \Box\psi)=1$ holds. Next, we assume that $V(c_{j}, \Box\psi)=0$ holds. Then there exists $w\in c_{j}R$ such that $V(w, \psi)=0$ holds. We may assume that $w=x_{0, j}$. We consider the following cases.
\begin{itemize}
    \item Case 1: $\forall m\leq k(f(m)\not=j)$ holds. Then $a_{0, j}\in c_{j}R^{\prime}$, and by lemma~\ref{1or2}, $V^{\prime}(a_{0, j}, \psi)=0$. So, $V^{\prime}(c_{j}, \Box\psi)=0$ holds.
    \item Case 2: $\exists l\leq k(f(l)=j)$ holds. Then $b_{l, j}\in c_{j}R^{\prime}$, and by lemma~\ref{1or2}, $V^{\prime}(b_{l, j}, \psi)=0$. So, $V^{\prime}(c_{j}, \Box\psi)=0$ holds.
\end{itemize}
By induction, for all $\varphi\in\mathrm{Fml}$, $V(c_{j}, \varphi)=1\leftrightarrow V^{\prime}(c_{j}, \varphi)=1$ holds. 
\end{lemproof}

\begin{lem}[$\mathrm{RCA}_0$]\label{D}
For all $j$, $(M, d_{j})\leftrightsquigarrow (M^{\prime}, d_{j})$.
\end{lem}

\begin{lemproof}
We show that for all $\varphi\in\mathrm{Fml}$, $\forall j(V(d_{j}, \varphi)=1\leftrightarrow V^{\prime}(d_{j}, \varphi)=1)$ by induction on the length of $\varphi$. By $\mathrm{I}\Sigma^0_1$, we can show it.
\vspace{2mm}

\textit{Base case}: For all $p\in\mathrm{Prop}$ and $j$, $V(d_{j}, p)=1\leftrightarrow V^{\prime}(d_{j}, p)=1$ by the definition of $V$. 
\vspace{2mm}

\textit{Induction step}: Fix $s$ and assume that for all $\psi\in\mathrm{Fml}$ with $\mathrm{deg}(\psi)=s$ and all $j$, $V(d_{j}, \psi)=1\leftrightarrow V^{\prime}(d_{j}, \psi)=1$ holds. Take $\theta=\Box\psi$ with $\mathrm{deg}(\psi)=s$ and $j$. We assume that $V(d_{j}, \Box\psi)=0$. Then there exists $w\in d_{j}R$ such that $V(w, \psi)=0$. $w$ is either $c_{j}$ or $d_{j+1}$. If $w=c_{j}$, then $V^{\prime}(w, \psi)=0$ by Lemma~\ref{C}. If $w=d_{j+1}$, then $V^{\prime}(w, \psi)=0$ by the assumption. So, $V^{\prime}(d_{j}, \Box\psi)=0$. In a similar way, we can show that if $V^{\prime}(d_{j}, \Box\psi)=0$, then $V(d_{j}, \Box\psi)=0$.
\vspace{2mm}

By induction, for all $\varphi\in\mathrm{Fml}$, $\forall j(V(d_{j}, \varphi)=1\leftrightarrow V^{\prime}(d_{j}, \varphi)=1)$ holds.
\end{lemproof}

By Lemma~\ref{D}, $(M, *)\leftrightsquigarrow (M^{\prime}, *)$ holds. Then, by the Hennessy-Milner theorem, $(M, *)\,\underline{\xleftrightarrow{}}\,(M^{\prime}, *)$, that is, there exists a bisimulation $Z\subseteq W\times W^{\prime}$ over $M$ and $M^{\prime}$ such that $*Z*$. Next, we show that for all $j$, $d_{j}Zd_{j}$ holds.  Then we find that for all $j$, $c_{j}Zc_{j}$ holds.

\begin{lem}[$\mathrm{RCA}_0$]\label{dzdczc}
For all $j$, $d_{j}Zd_{j}$ and $c_{j}Zc_{j}$ hold.
\end{lem}

\begin{lemproof}
We show that for all $j$, $d_{j}Zd_{j}$ by induction on $j$. By $*R=\{d_{0}\}$ and $*Z*$, $d_{0}Zd_{0}$ holds. Fix $j$ and we assume that $d_{j}Zd_{j}$. By the forth condition of the bisimulation $Z$, there exists $w\in d_{j}R$ such that $wZd_{j+1}$. Then, by $V(d_{j+1}, r_{j+1})=V^{\prime}(d_{j+1}, r_{j+1})=1$, $V(w, r_{j+1})=1$. So, $w=d_{j+1}$, that is, $d_{j+1}Zd_{j+1}$. By induction, for all $j$, $d_{j}Zd_{j}$. 
\vspace{2mm}

Next, we show that for all $j$, $c_{j}Zc_{j}$. Fix $j$. Then $d_{j}Zd_{j}$ holds, so, by the forth condition of the bisimulation $Z$, there exists $w\in d_{j}R$ such that $wZc_{j}$. Then, by $V(c_{j}, q_{j})=V^{\prime}(c_{j}, q_{j})=1$, $V(w, q_{j})=1$. So, $w=c_{j}$, that is, $c_{j}Zc_{j}$. 
\end{lemproof}

Then we define $J=\{j\in\mathbb{N}\mid\neg(x_{0, j}Za_{0, j})\}$. We show that $J$ is the range of $f$. 
\begin{itemize}
    \item Assume that $j\in J$. Then $\neg(x_{0, j}Za_{0, j})$ holds. By the bisimulation $Z$ and Lemma~\ref{dzdczc}, there exists $w\in c_{j}R^{\prime}$ such that $x_{0, j}Zw$. Note that $x_{0, j}\not\in c_{j}R^{\prime}$ and $w\not=a_{0, j}$. So, by the definition of $R^{\prime}\,(\text{that is}, R)$, there exists $m$ such that $w=b_{m, j}$, $f(m)=j$.
    \item We assume that $j\not\in J$. Then $x_{0, j}Za_{0, j}$ holds. To deduce a contradiction, we assume that there exists $m$ such that $f(m)=j$. $m\not=0$, otherwise $V(a_{0, j}, p_{0, j})=0$ and $V(x_{0, j}, p_{0, j})=1$. So, $m>0$. In fact, we can show that $x_{m-1, j}Za_{m-1, j}$ holds by $\Sigma^0_0$-induction. By the bisimulation $Z$ and $x_{m-1, j}Rx_{m, j}$, there exists $w\in a_{m-1, j}R^{\prime}$ such that $x_{m, j}Za_{m, j}$. However, $a_{m-1, j}R^{\prime}=\emptyset$. It is a contradiction. Therefore, for all $m$, $f(m)\not=j$. So, $\exists m(f(m)=j)\leftrightarrow j\in J$.
\end{itemize}
With the above, the proof is complete. 
\end{proof}

\section{The van Benthem characterization theorem in second-order arithmetic}

In this section, we study the strength of the van Benthem characterization theorem in second-order arithmetic. We consider three forms of the theorem: the semantic form, the syntactic form, and the hybrid form. We show that the semantic form is provable in $\mathrm{RCA}_0$, the syntactic form is provable in $\mathrm{PRA}$, and the hybrid form is equivalent to the lightface version of $\mathrm{WKL}$ over $\mathrm{RCA}_0$. %To prove this, we formalize Otto’s proof of the van Benthem characterization theorem~\cite{Otto2004b,MR2258711} in $\mathrm{RCA}_0$.

%we show that the van Benthem characterization theorem is provable in $\mathrm{WKL}_0$. Furthermore, we demonstrate that it is also provable in $\mathrm{PRA}$. 
First, we define the provability predicate for modal logic.
\begin{comment}
\begin{prop}[$\mathrm{RCA}_0$]\label{uniform}
There exists a function $f:\mathcal{F}\times\mathrm{Fml}\rightarrow\mathrm{Fml}$ such that for all finite function $\sigma:\subseteq\mathrm{Prop}\rightarrow\mathrm{Fml}$, $f(\sigma, \cdot)$ is the \emph{uniform substitution for $\sigma$}, i.e.,
\begin{itemize}
    \item $f(\sigma,p)=\sigma(p)$ if $p\in\mathrm{Prop}$ and $p\in\mathrm{dom}(\sigma)$,
    \item $f(\sigma,\alpha)=\alpha$ if $\alpha\in\mathrm{C}$,
    \item $f(\sigma,\bot)=\bot$,
    \item $f(\sigma,\varphi\rightarrow\psi)=f(\sigma,\varphi)\rightarrow f(\sigma,\psi)$,
    \item $f(\sigma,\Box\varphi)=\Box f(\sigma,\varphi)$.
\end{itemize}
\end{prop}

\begin{proof}
We define $f:\mathcal{F}\times\mathrm{Fml}\rightarrow\mathrm{Fml}$ recursively: for all $p\in\mathrm{Prop}\cup\mathrm{C}$ and $\sigma\in\mathcal{F}$, $f(\sigma, p)=\sigma(p)$ if $p\in\mathrm{Prop}\land p\in\mathrm{dom}(\sigma)$, otherwise $f(\sigma, p)=p$. This is possible using $\mathrm{I}\mathrm{\Sigma}^0_1$.
\end{proof}

Henceforth, for all $\varphi\in\mathrm{Fml}$ and $\sigma\in\mathcal{F}$, we denote $f(\sigma,\varphi)$ as $\overline{\sigma}(\varphi)$. Note that for all $\sigma\in\mathrm{Seq}$, we denote the length of $\sigma$ as $lh(\sigma)$.
We now formalize the notion of provability.
\end{comment}
\begin{dfn}[$\mathrm{RCA}_0$]
Let $\mathrm{\mathrm{\Gamma}}\subset\mathrm{Fml}$. We define the following predicates:
\begin{equation*}
\begin{split}
\mathrm{Prf}_{\mathbf{K}}(\mathrm{\mathrm{\Gamma}}, p)\equiv p&\in\mathrm{Seq}\land\forall k(k<lh(p)\rightarrow p(k)\in\mathrm{Fml})\\&\land\forall k\Bigl(k<lh(p)\rightarrow\Bigl(p(k)\in\mathrm{\Gamma}\lor p(k)\in\mathrm{Axm}\\&\lor(\exists i<k\exists j<k)(p(i)=p(j)\rightarrow p(k))\lor(\exists i<k)(p(k)=\Box p(i))\\&\lor(\exists i<k)(\exists\sigma:\{q<p(i)\mid q\in\mathrm{Prop}\}\rightarrow\{\varphi<p(k)\mid\varphi\in\mathrm{Fml}\})\\&(p(k)=\overline{\sigma}(p(i)))\Bigr)\Bigr),
\end{split}
\end{equation*}
\begin{equation*}
\begin{split}
\mathrm{Pbl}_{\mathbf{K}}(\mathrm{\mathrm{\Gamma}}, \varphi)\equiv&\exists\psi_1,\dots,\exists\psi_n\in\mathrm{\mathrm{\Gamma}}\exists p\\&\bigl(\mathrm{Prf}_{\mathbf{K}}(\emptyset, p)\land(\exists i<lh(p))(p(i)=(\psi_1\land\dots\land\psi_n\rightarrow\varphi))\bigr), 
\end{split}
\end{equation*}
\end{dfn}
where $\overline{\sigma}$ is the uniform substitution for a finite function $\sigma:\subseteq\mathrm{Prop}\rightarrow\mathrm{Fml}$ (See~\cite{10.1007/978-3-031-95908-0_32}). 

%\begin{dfn}[$\mathrm{RCA}_0$]
Let $\mathrm{\mathrm{\Gamma}}\subset\mathrm{Fml}$. $\mathrm{\mathrm{\Gamma}}$ is \emph{consistent} if $\neg\mathrm{Pbl}_{\mathbf{K}}(\mathrm{\mathrm{\Gamma}}, \bot)$ holds.
\begin{comment}
\begin{itemize}
    \item $\mathrm{\mathrm{\Gamma}}$ is \emph{consistent} if $\neg\mathrm{Pbl}_{\mathbf{K}}(\mathrm{\mathrm{\Gamma}}, \bot)$ holds,
    \item $\mathrm{\mathrm{\Gamma}}$ is \emph{closed under deduction} if $\forall\varphi\in\mathrm{Fml}(\mathrm{Pbl}_{\mathbf{K}}(\mathrm{\mathrm{\Gamma}}, \varphi)\rightarrow\varphi\in\mathrm{\Gamma})$ holds,
    \item $\mathrm{\mathrm{\Gamma}}$ is \emph{complete} if $\forall\varphi\in\mathrm{Fml}(\varphi\in \mathrm{\mathrm{\Gamma}}\lor\varphi\not\in\mathrm{\mathrm{\Gamma}})$ holds.
%    \item $\mathrm{\mathrm{\Gamma}}$ is \emph{maximally $\mathbf{K}\mathrm{\mathrm{\Sigma}}$-consistent} if $\mathrm{\mathrm{\Gamma}}$ is $\mathbf{K}\mathrm{\mathrm{\Sigma}}$-consistent, closed under deduction, and complete.
\end{itemize}
%\end{dfn}
\end{comment}

\begin{dfn}[$\mathrm{RCA}_0$]
Let $\mathcal{L}_{\mathcal{FO}}$ be the first-order language (with equality) which has unary predicates $P_0, P_1, \dots$ corresponding to the proposition letters $p_0, p_1, \dots$, and a binary relation symbol $r$.
\end{dfn}

As in the case of modal logic, we can show in $\mathrm{RCA}_0$ that there exist sets $\mathrm{Trm}_{\mathcal{FO}}$, $\mathrm{Fml}_{\mathcal{FO}}$, $\mathrm{Snt}_{\mathcal{FO}}$, and $\mathrm{Axm}_{\mathcal{FO}}$ consisting of all $\mathcal{L}_{\mathcal{FO}}$-terms, $\mathcal{L}_{\mathcal{FO}}$-formulas, $\mathcal{L}_{\mathcal{FO}}$-sentences, and axioms of first-order logic in $\mathcal{L}_{\mathcal{FO}}$, respectively. Moreover, we can define the provability predicate of first-order logic in $\mathrm{RCA}_0$. We assume that the only rule of inference is modus ponens (see~\cite{MR2517689}).

\begin{dfn}[$\mathrm{RCA}_0$~\cite{MR2517689}]
Let $X\subset\mathrm{Fml}_\mathcal{FO}$. We define the following formulas:
\begin{equation*}
\begin{split}
\mathrm{Prf}_{\mathcal{FO}}(X, p)\equiv p&\in\mathrm{Seq}\land\forall k(k<lh(p)\rightarrow p(k)\in\mathrm{Fml}_\mathcal{FO})\\&\land\forall k\Bigl(k<lh(p)\rightarrow\Bigl(p(k)\in X\lor p(k)\in\mathrm{Axm}_\mathcal{FO}\\&\lor(\exists i<k\exists j<k)(p(i)=p(j)\rightarrow p(k))\Bigr)\Bigr),
\end{split}
\end{equation*}
\begin{equation*}
\begin{split}
\mathrm{Pbl}_{\mathcal{FO}}(X, \varphi)\equiv&\exists\psi_1,\dots,\psi_n\in X\exists p\\&\bigl(\mathrm{Prf}_{\mathcal{FO}}(X, p)\land(\exists i<lh(p))(p(i)=(\psi_1\land\dots\land\psi_n\rightarrow\varphi))\bigr).
\end{split}
\end{equation*}
\end{dfn}

Furthermore, as in the case of modal logic, we can use $\mathrm{RCA}_0$ to describe the consistency of the system and the satisfiability of the model in first-order logic $\mathcal{FO}$.

\begin{dfn}[$\mathrm{RCA}_0$~\cite{MR2517689}]
Let $M_{\star}$ be a set. Let $T_M$ and $S_M$ be respectively the sets of closed terms and sentences of the expanded language $\mathcal{L}^{M}_{\mathcal{FO}}=\mathcal{L}_{\mathcal{FO}}\cup\{\overline{m}\mid m\in M_{\star}\}$ with new constant symbols $\overline{m}$ for each element $m\in M_{\star}$. 

$M=(M_{\star}, V)$ is a standard model on $\mathcal{L}_{\mathcal{FO}}$ if the function $V: T_{M}\cup S_{M}\rightarrow M_{\star}\cup\{0, 1\}$ satisfies the following conditions:
\begin{itemize}
    \item $V(t)\in M_{\star}$ for any $t\in T_M$,
    \item $V(\varphi)\in\{0, 1\}$ for any $\varphi\in S_M$,
    \item $\bigl(\forall i\leq n(V(t_i)=V(t^{\prime}_i))\rightarrow V(R(t_1,\dots,t_n))=V(R(t^{\prime}_1,\dots,t^{\prime}_n))\bigr)$, \\where $R$ is a relation symbol, for any $t_1,  t^{\prime}_1, \dots  t_n, t^{\prime}_n\in T_M$,
    \item $V(\bot)=0$,
    \item $V(\varphi\rightarrow\psi)=1-V(\varphi)(1-V(\psi))$ for any $\varphi,\psi\in S_M$,
    \item $V(\forall x\varphi(x))=1\iff\forall a\in M_{\star}(V(\varphi(\overline{a}))=1)$  for any $\varphi(\overline{a})\in S_M$.
\end{itemize}
\end{dfn}

Let $X\subseteq\mathrm{Snt}_\mathcal{FO}$. Then a standard model $M$ is  a model of $X$ on $\mathcal{L}_{\mathcal{FO}}$ if $\forall\varphi\in X(V(\varphi)=1)$ holds.

\begin{dfn}[$\mathrm{RCA}_0$]
Let $\varphi\in\mathrm{Snt}_{\mathcal{FO}}$ and $X\subseteq\mathrm{Snt}_{\mathcal{FO}}$. Let $M=(M_{\star}, V)$ be a standard model on $\mathcal{L}_{\mathcal{FO}}$. We write $M\vDash\varphi$ if $M$ satisfies $\varphi$, i.e., $V(\varphi)=1$, and we write $X\vDash\varphi$ if every model of $X$ on $\mathcal{L}_{\mathcal{FO}}$ satisfies $\varphi$.
\end{dfn}

\begin{thm}[The Soundness Theorem for $\mathcal{FO}$, $\mathrm{RCA}_0$ \cite{MR2517689}]
Let $\varphi\in\mathrm{Snt}_{\mathcal{FO}}$. If $\mathrm{Pbl}_{\mathcal{FO}}(\emptyset, \varphi)$ holds, then $\vDash\varphi$ holds.
\end{thm}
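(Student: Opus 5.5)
The plan is the standard soundness argument, carried out by induction along the proof so that it stays inside $\mathrm{RCA}_0$. Fix a standard model $M=(M_\star,V)$ on $\mathcal{L}_{\mathcal{FO}}$. Suppose $\mathrm{Pbl}_{\mathcal{FO}}(\emptyset,\varphi)$ holds. Since $X=\emptyset$, there is a sequence $p$ with $\mathrm{Prf}_{\mathcal{FO}}(\emptyset,p)$ and some $i<lh(p)$ with $p(i)=\varphi$; the implication with an empty conjunction degenerates to $\varphi$ itself, or to $\top\rightarrow\varphi$, which is handled by one extra modus ponens step. The entries $p(k)$ may be formulas with free variables. So the statement to prove is the following: for every $k<lh(p)$ and every assignment of elements of $M_\star$ to the free variables of $p(k)$, the value under $V$ of the resulting sentence of $S_M$ is $1$. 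Because $V$ is given as a set, the formula $\forall s\,(V(p(k)[\overline{s}])=1)$ is $\Pi^0_1$. Its negation is $\Sigma^0_1$, so we may take the least $k$ at which it fails, by $\Sigma^0_1$-least-number, which follows from $\mathrm{I}\Sigma^0_1$. Equivalently, we run $\Pi^0_1$-induction on $k$.

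For the inductive step there are three cases. If $p(k)\in X$, this cannot occur because $X=\emptyset$. If $p(k)$ is obtained by modus ponens from $p(i)$ and $p(j)=(p(i)\rightarrow p(k))$ with $i,j<k$, then for any assignment $s$ both premises get value $1$ under $s$. The clause $V(\alpha\rightarrow\beta)=1-V(\alpha)(1-V(\beta))$ then forces $V(p(k)[\overline{s}])=1$. Any variables of $p(k)$ that are absent from $p(i)$ are handled by extending $s$ arbitrarily, which is possible since $M_\star$ is non-empty.

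The remaining case is $p(k)\in\mathrm{Axm}_{\mathcal{FO}}$. Here one checks each axiom scheme uniformly in the instance.
\begin{itemize}
\item Propositional tautologies (or the Hilbert schemes) follow from the $\bot$ and $\rightarrow$ clauses. For a tautology we argue by a bounded induction on its propositional structure that its truth value is computed by the truth table.
\item The quantifier axioms $\forall x\psi\rightarrow\psi(t)$ and $\forall x(\alpha\rightarrow\psi)\rightarrow(\alpha\rightarrow\forall x\psi)$, with $x$ not free in $\alpha$, use the $\forall$ clause. For the instantiation axiom one also needs a substitution lemma: $V(\psi(t))=V(\psi(\overline{V(t)}))$. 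This is proved by induction on the complexity of $\psi$, starting from the congruence clause for relation symbols, using $\Sigma^0_0$ induction relative to $V$ over subformulas of a fixed formula.
\item The equality axioms reduce to the congruence clause together with reflexivity, taking $=$ as interpreted by equality of values $V(t)$.
\end{itemize}

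I expect the main obstacle to be this axiom case, in particular the substitution lemma, since it must be proved inside $\mathrm{RCA}_0$ for arbitrary nonstandard formulas. That is nevertheless fine: for a fixed formula the claim quantifies over its finitely many subformulas and over all assignments. The resulting statement is $\Pi^0_1$ in $V$, so $\Pi^0_1$-induction on formula length suffices. The proof then concludes that $V(\varphi)=1$ for every standard model, i.e.\ $\vDash\varphi$. This is exactly Simpson's argument in \cite{MR2517689}.
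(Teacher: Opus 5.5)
Your proposal is correct and is essentially the standard argument from Simpson that the paper cites rather than reproves. You run $\Pi^0_1$-induction (equivalently the $\Sigma^0_1$ least number principle) along the proof, with the hypothesis that every instance of each line $p(k)$ over $M_\star$ gets value $1$, and check modus ponens and each axiom scheme directly (since modus ponens is the only rule, $\mathrm{Axm}_{\mathcal{FO}}$ must also contain generalizations of axioms, which your all-assignments hypothesis handles immediately via the $\forall$ clause).
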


\begin{thm}[G\"odel's Completeness Theorem for $\mathcal{FO}$, $\mathrm{WKL}_0$ \cite{MR2517689}]
Let \\$\varphi\in\mathrm{Snt}_{\mathcal{FO}}$. If $X\subseteq\mathrm{Snt}_{\mathcal{FO}}$ is consistent, then $\mathrm{Pbl}_{\mathcal{FO}}(X, \varphi)$ holds if and only if $X\vDash\varphi$ holds.
\end{thm}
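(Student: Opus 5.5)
The statement to prove is Gödel's completeness theorem for $\mathcal{FO}$ in $\mathrm{WKL}_0$. Soundness gives one direction, so the work is the converse: if $X$ is consistent and $X\vDash\varphi$, then $\mathrm{Pbl}_{\mathcal{FO}}(X,\varphi)$. We argue contrapositively. Suppose $\neg\mathrm{Pbl}_{\mathcal{FO}}(X,\varphi)$. Then $X\cup\{\neg\varphi\}$ is consistent; this uses the deduction theorem for the Hilbert system with modus ponens, which is a primitive recursive transformation of proofs and so is available in $\mathrm{RCA}_0$. It therefore suffices to show, in $\mathrm{WKL}_0$, that every consistent set $Y\subseteq\mathrm{Snt}_{\mathcal{FO}}$ has a standard model $M=(M_\star,V)$ with $V(\psi)=1$ for all $\psi\in Y$.

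\textbf{Henkin extension.} Following Simpson's approach, we first pass to a Henkin extension. We add countably many new constants $c_0,c_1,\dots$ and form $Y^{H}=Y\cup\{\exists x\theta(x)\rightarrow\theta(c_{\theta})\}$, where $c_\theta$ is a fresh witness constant for each $\theta$ under a fixed primitive recursive assignment. $Y^{H}$ exists by $\Delta^0_1$-comprehension. Its conservativity over $Y$ is shown by the usual syntactic argument that eliminates witness constants one at a time, turning a proof of $\bot$ from $Y^H$ into one from $Y$; this is again a primitive recursive proof transformation, verified in $\mathrm{RCA}_0$.

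\textbf{Completion via weak König's lemma.} Next we enumerate all sentences $\sigma_0,\sigma_1,\dots$ of the expanded language and build a binary tree $T$. A node $s\in 2^{<\mathbb{N}}$ lies in $T$ iff there is no proof with code $<lh(s)$ of $\bot$ from $Y^H\cup\{\sigma_i^{s(i)}\}_{i<lh(s)}$, where $\sigma^1=\sigma$ and $\sigma^0=\neg\sigma$. This is a bounded condition, so $T$ exists in $\mathrm{RCA}_0$. $T$ is infinite: if level $n$ were empty, finitely many case splits would give a proof of $\bot$ from $Y^H$ alone. By $\mathrm{WKL}$ there is a path $g$, and $\Gamma=\{\sigma_i:g(i)=1\}$ is a complete consistent Henkin theory containing $Y$.

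\textbf{Term model.} Let $M_\star$ be the set of least representatives of closed terms modulo $t\sim t'\iff (t=t')\in\Gamma$. This set is $\Delta^0_1$ in $\Gamma$, since membership of each sentence in $\Gamma$ is decided by $g$. Set $V(\psi)=1\iff\psi\in\Gamma$, after replacing each constant $\overline m$ by its term. Checking the clauses of a standard model is routine. The clause for $\to$ uses completeness and consistency of $\Gamma$. The clause for $\forall$ uses the Henkin axioms for one direction and the instantiation axiom for the other. The clause for relation symbols uses the equality axioms.

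\textbf{Main obstacle.} The hard part is the bookkeeping in $\mathrm{RCA}_0$: the deduction theorem and the Henkin conservativity must be carried out as explicit primitive recursive proof transformations. Once that is done, $V$ is defined outright from $\Gamma$ without any induction on formulas, so no $\mathrm{ACA}_0$ is needed, and $\mathrm{WKL}$ is used exactly once.
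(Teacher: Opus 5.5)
Your proposal is correct and matches the argument the paper relies on: the paper gives no proof of its own and cites Simpson, whose proof also does the Henkin extension, the deduction-theorem bookkeeping and the term model in $\mathrm{RCA}_0$, and uses $\mathrm{WKL}$ exactly once, to take a path through the tree of finite sign-assignments that admit no short proof of $\bot$. Choosing least representatives, so that $=$ is interpreted as identity, and defining $V$ directly from the completion $\Gamma$ are what keep the construction inside $\mathrm{WKL}_0$.
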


\begin{cor}[Weak Completeness Theorem for $\mathcal{FO}$, $\mathrm{WKL}_0$]
Let $\psi\in\mathrm{Snt}_{\mathcal{FO}}$ and let $T\subseteq\mathrm{Snt}_{\mathcal{FO}}$ be a finite set. If $T$ is consistent, then $\mathrm{Pbl}_{\mathcal{FO}}\bigl(T, \psi)\bigr)$ holds if and only if $T\vDash\psi$ holds.
\end{cor}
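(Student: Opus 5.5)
The corollary should follow directly from Gödel's Completeness Theorem for $\mathcal{FO}$ (stated just above and provable in $\mathrm{WKL}_0$) by specializing $X$ to the finite set $T$. The plan is to work in $\mathrm{WKL}_0$, fix a sentence $\psi\in\mathrm{Snt}_{\mathcal{FO}}$ and a finite consistent $T\subseteq\mathrm{Snt}_{\mathcal{FO}}$, and check that $T$ meets the hypotheses of the completeness theorem. In $\mathrm{RCA}_0$ a finite set is coded by a natural number, so $T$ exists as a set by $\Delta^0_1$-comprehension, and consistency of $T$ is exactly the hypothesis used there. The completeness theorem then yields $\mathrm{Pbl}_{\mathcal{FO}}(T,\psi)\leftrightarrow T\vDash\psi$ at once.

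For the reader's benefit I would also record the two directions separately, since only one of them uses $\mathrm{WKL}$. For soundness, suppose $\mathrm{Pbl}_{\mathcal{FO}}(T,\psi)$ holds, witnessed by $\psi_1,\dots,\psi_n\in T$ and a proof of $\psi_1\land\dots\land\psi_n\rightarrow\psi$. The Soundness Theorem for $\mathcal{FO}$ (already available in $\mathrm{RCA}_0$) gives $\vDash\psi_1\land\dots\land\psi_n\rightarrow\psi$. Hence every model of $T$, which satisfies each $\psi_i$, satisfies $\psi$, using the clause $V(\varphi\rightarrow\chi)=1-V(\varphi)(1-V(\chi))$. For the converse, if $\neg\mathrm{Pbl}_{\mathcal{FO}}(T,\psi)$, then $T\cup\{\neg\psi\}$ is a finite consistent set. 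The model existence part of Gödel's theorem in $\mathrm{WKL}_0$ then produces a standard model of $T\cup\{\neg\psi\}$, which witnesses $T\not\vDash\psi$.

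The only step needing any care is the one routine verification: that $\neg\mathrm{Pbl}_{\mathcal{FO}}(T,\psi)$ implies consistency of $T\cup\{\neg\psi\}$. This is a deduction-theorem argument with modus ponens as the sole rule. A proof of $\bot$ from $T\cup\{\neg\psi\}$ yields a proof of $\bigwedge T_0\rightarrow(\neg\psi\rightarrow\bot)$, and hence of $\bigwedge T_0\rightarrow\psi$. This is a primitive recursive transformation of proof codes, so it is available in $\mathrm{RCA}_0$. Beyond that, the corollary is an immediate instance of the cited theorem, and I expect no real obstacle.
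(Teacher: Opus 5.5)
Your proposal is correct and matches the paper: the paper proves this corollary in one line by instantiating G\"odel's Completeness Theorem for $\mathcal{FO}$ with $X=T$, which is your first paragraph. One small remark on your optional soundness paragraph: under the paper's definition, $\mathrm{Pbl}_{\mathcal{FO}}(T,\psi)$ lets the witnessing proof $p$ use members of $T$ as premises, so the stated Soundness Theorem (which concerns $\mathrm{Pbl}_{\mathcal{FO}}(\emptyset,\cdot)$) does not literally give $\vDash\psi_1\land\dots\land\psi_n\rightarrow\psi$; you should instead invoke strong soundness (every model of $T$ satisfies every line of a proof from $T$), which is also provable in $\mathrm{RCA}_0$.
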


\begin{proof}
By G\"odel's Completeness Theorem for $\mathcal{FO}$.
\end{proof}

The following is essentially a first-order reformulation of the modal satisfaction definition.

\begin{dfn}[$\mathrm{RCA}_0$~\cite{MR1837791}]
Let $x$ be a first-order variable. The \emph{standard translation} $\mathrm{ST}_{x}$ taking modal formulas to $\mathcal{L}_{\mathcal{FO}}$-formulas is defined as follows:
\begin{itemize}
    \item $\mathrm{ST}_{x}(p_{n})\equiv P_{n}(x)$,
    \item $\mathrm{ST}_{x}(\bot)\equiv\bot$,
    \item $\mathrm{ST}_{x}(\varphi\rightarrow\psi)\equiv \mathrm{ST}_{x}(\varphi)\rightarrow \mathrm{ST}_{x}(\psi)$,
    \item $\mathrm{ST}_{x}(\Box\varphi)\equiv\forall y(r(x, y)\rightarrow \mathrm{ST}_{y}(\varphi))$   ($y$ is fresh).
\end{itemize}
\end{dfn}

\begin{lem}[$\mathrm{RCA}_0$~\cite{10.1007/978-3-031-95908-0_32}]\label{frightk}
Let $\varphi\in\mathrm{Fml}$. For all $M=(M_{\star}, V)$ which is a standard model on $\mathcal{L}_{\mathcal{FO}}$, there exists a Kripke model $M^{\star}$ such that
\begin{enumerate}
    \item for all $w\in M$, $M^{\star}, w\Vdash\varphi\iff M\vDash \mathrm{ST}_{w}(\varphi)$, and
    \item $M^{\star}\Vdash\varphi\iff M\vDash\forall x\mathrm{ST}_{x}(\varphi)$.
\end{enumerate}
\end{lem}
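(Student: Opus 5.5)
The plan is to read off a Kripke model directly from the first-order structure and then compare the two satisfaction relations by induction on the modal formula, using only $\mathrm{I}\Sigma^0_1$ (indeed only $\Sigma^0_0$-induction on a bounded quantity).

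\textbf{Step 1: build the frame and the atomic valuation.} Given a standard model $M=(M_\star,V)$ on $\mathcal{L}_{\mathcal{FO}}$, put $W=M_\star$ and
\[
R=\{(a,b)\in M_\star\times M_\star\mid V(r(\overline a,\overline b))=1\}.
\]
Both sets exist by $\Delta^0_1$-comprehension, since $V$ is given as a set.

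\textbf{Step 2: extend to all modal formulas without comprehension beyond $\Delta^0_1$.} The naive route would be to use Statement~\ref{pextend}, but that route is unavailable in $\mathrm{RCA}_0$. Instead I will define the full valuation directly by
\[
V^\star(w,\psi)=V(\mathrm{ST}_{\overline w}(\psi)).
\]
Here $\mathrm{ST}_{\overline w}(\psi)$ is the standard translation with the free variable $x$ replaced by the constant $\overline w$. This map is primitive recursive in $(w,\psi)$, so $V^\star$ exists by $\Delta^0_1$-comprehension relative to $V$. The Kripke model is then $M^\star=(W,R,V^\star)$.

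\textbf{Step 3: verify that $V^\star$ satisfies the clauses of a valuation.} Since $\mathrm{ST}$ commutes with $\bot$ and $\to$, the clauses for $\bot$ and $\to$ transfer immediately from the corresponding clauses for $V$.

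For $\Box$, write $\mathrm{ST}_{\overline w}(\Box\psi)=\forall y(r(\overline w,y)\to\mathrm{ST}_y(\psi))$. The $\forall$-clause of $V$ gives that $V^\star(w,\Box\psi)=1$ iff for all $a\in M_\star$ we have $V(r(\overline w,\overline a)\to \mathrm{ST}_{\overline a}(\psi))=1$. By the $\to$-clause this says: for all $a$, if $wRa$ then $V^\star(a,\psi)=1$, which is exactly the Kripke clause.

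The main obstacle is here, and it is syntactic bookkeeping. One must check that substituting $\overline a$ for $y$ in $\mathrm{ST}_y(\psi)$ yields exactly $\mathrm{ST}_{\overline a}(\psi)$, i.e., that the fresh-variable convention makes the substitution lemma hold. I would prove this by induction on $\psi$ as a $\Pi^0_0$ statement about codes, which is fine in $\mathrm{RCA}_0$. Note that no induction over truth is needed, since $V^\star$ is defined outright rather than recursively.

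\textbf{Step 4: read off the two claims.} With $V^\star$ defined this way, clause (1) holds by definition, reading $\mathrm{ST}_w$ as $\mathrm{ST}_{\overline w}$. For clause (2), $M^\star\Vdash\varphi$ means $\forall w\,V(\mathrm{ST}_{\overline w}(\varphi))=1$. By the $\forall$-clause of $V$, this is equivalent to $V(\forall x\,\mathrm{ST}_x(\varphi))=1$, again using the substitution lemma from Step 3.
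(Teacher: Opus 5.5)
Your proof is correct and follows the construction the paper has in mind. The paper does not reprove this lemma (it imports it from the authors' earlier work), but the remark right after it, treating $V(w,\varphi)$ and $V(\mathrm{ST}_{w}(\varphi))$ as synonymous, is exactly your definition: $R$ is read off from $r$, $V^{\star}(w,\psi)$ is $V(\mathrm{ST}_{\overline{w}}(\psi))$, and the Kripke clauses are inherited from the Tarski clauses of $V$.

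The one point to nail down is the one you flagged. With an index-shifting choice of fresh variables, $\mathrm{ST}_{y}(\psi)[\overline{a}/y]$ and $\mathrm{ST}_{\overline{a}}(\psi)$ are only alphabetic variants, not identical codes. You can fix this in either of two ways, and both stay within $\mathrm{RCA}_0$. Either choose the bound variable of $\mathrm{ST}_{t}(\Box\chi)$ to depend only on $\chi$, which makes the identity literal. Or prove that $V$ respects alphabetic variance by a $\Pi^0_1$ induction on the number of logical symbols.
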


In this sense, we identify the Kripke model with the standard model on $\mathcal{L}_{\mathcal{FO}}$. For a Kripke model $M=(W, R, V)$, a world $w\in W$, and a modal formula $\varphi\in\mathrm{Fml}$, we treat $V(w, \varphi)$ and $V(\mathrm{ST}_{w}(\varphi))$ as synonymous.

\begin{dfn}[$\mathrm{RCA}_0$~\cite{MR1837791}]
Let $\varphi(x)\in\mathrm{Fml}_{\mathcal{FO}}$ which has no free number variables except $x$. $\varphi(x)$ is \emph{bisimulation invariant} if for all models $M$ and $N$, all worlds $w\in M$, $v\in N$, and all bisimulations $Z$ between $M$ and $N$ such that $wZv$, we have $M\vDash\varphi(w)$ if and only if $N\vDash\varphi(v)$.
\vspace{1mm}

$\varphi(x)$ is \emph{bisimulation invariant for finite models} if for all finite models $M$ and $N$, all worlds $w\in M$, $v\in N$, and all bisimulations $Z$ between $M$ and $N$ such that $wZv$, we have $M\vDash\varphi(w)$ if and only if $N\vDash\varphi(v)$.
\end{dfn}

\subsection{Semantic form of the van Benthem characterization theorem}

In this subsection, we show that the semantic form of the van Benthem characterization theorem is provable in $\mathrm{RCA}_0$. To prove this, we formalize Otto’s proof of the van Benthem characterization theorem~\cite{Otto2004b,MR2258711} in $\mathrm{RCA}_0$.

First, we consider the two forms of the van Benthem characterization theorem: the semantic form and the hybrid form.

\begin{state}[The van Benthem characterization theorem (the semantic form)]
Let $\varphi(x)\in\mathrm{Fml}_{\mathcal{FO}}$ which has no free number variables except $x$. Then the following are equivalent:
\begin{enumerate}
    \item $\varphi(x)$ is bisimulation invariant,
    \item there exists $\psi\in\mathrm{Fml}$ such that $\vDash\forall x(\varphi(x)\leftrightarrow\mathrm{ST}_{x}(\psi))$, that is, every standard model $M$ on $\mathcal{L}_{\mathcal{FO}}$ satisfies $\varphi$.
\end{enumerate}
\end{state}

\begin{state}[The van Benthem characterization theorem (the hybrid form)]\label{vanB}
Let $\varphi(x)\in\mathrm{Fml}_{\mathcal{FO}}$ which has no free number variables except $x$. Then the following are equivalent:
\begin{enumerate}
    \item $\varphi(x)$ is bisimulation invariant,
    \item there exists $\psi\in\mathrm{Fml}$ such that $\mathrm{Pbl}_{\mathcal{FO}}\bigl(\emptyset, \forall x(\varphi(x)\leftrightarrow\mathrm{ST}_{x}(\psi))\bigr)$ holds.
\end{enumerate}
\end{state}

For the two statements, we show that $(2)$ implies $(1)$ in $\mathrm{RCA}_0$.

\begin{prop}[$\mathrm{RCA}_0$~\cite{MR1837791}]\label{2im1vB1}
For every $\varphi(x)\in\mathrm{Fml}_{\mathcal{FO}}$ which has no free number variables except $x$, if  there exists $\psi\in\mathrm{Fml}$ such that $\mathrm{Pbl}_{\mathcal{FO}}\bigl(\emptyset, \forall x(\varphi(x)\leftrightarrow\mathrm{ST}_{x}(\psi))\bigr)$ holds, then $\varphi(x)$ is bisimulation invariant.
\end{prop}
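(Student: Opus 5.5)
Assume $\mathrm{Pbl}_{\mathcal{FO}}\bigl(\emptyset, \forall x(\varphi(x)\leftrightarrow\mathrm{ST}_{x}(\psi))\bigr)$ for some $\psi\in\mathrm{Fml}$. The plan is to reduce bisimulation invariance of $\varphi(x)$ to bisimulation invariance of the modal formula $\psi$, which we already have from Proposition~\ref{bisimimmeq}.

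\emph{Step 1: pass from provability to truth.} By the Soundness Theorem for $\mathcal{FO}$ (available in $\mathrm{RCA}_0$), we get $\vDash\forall x(\varphi(x)\leftrightarrow\mathrm{ST}_{x}(\psi))$. So every standard model $M=(M_\star,V)$ on $\mathcal{L}_{\mathcal{FO}}$ satisfies this sentence. Using the clause for $\forall$ in the definition of a standard model, for each $a\in M_\star$ we obtain $V(\varphi(\overline{a}))=V(\mathrm{ST}_{a}(\psi))$. Here $\leftrightarrow$ is unfolded through the $\rightarrow$ clauses.

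\emph{Step 2: transfer along a bisimulation.} Let $M$, $N$ be models, $Z$ a bisimulation between them, and $wZv$. Following the identification stated after Lemma~\ref{frightk}, regard $M$ and $N$ as Kripke models whose valuations are read off from the standard-model valuations. Then $V_M(w,\psi)=V_M(\mathrm{ST}_w(\psi))$, and similarly for $N$. Proposition~\ref{bisimimmeq} gives $(M,w)\leftrightsquigarrow(N,v)$, so in particular $M,w\Vdash\psi$ iff $N,v\Vdash\psi$. Chaining the equalities,
\[
M\vDash\varphi(w)\iff M\vDash\mathrm{ST}_w(\psi)\iff M,w\Vdash\psi\iff N,v\Vdash\psi\iff N\vDash\mathrm{ST}_v(\psi)\iff N\vDash\varphi(v).
\]

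The main point needing care is the middle equivalence $M\vDash\mathrm{ST}_w(\psi)\iff M,w\Vdash\psi$ inside $\mathrm{RCA}_0$. Without $\mathrm{ACA}_0$ we cannot freely build full valuations, so I would not construct a new valuation. Instead, I would invoke Lemma~\ref{frightk}(1): it provides a Kripke model $M^\star$ from the given standard model, with exactly this equivalence for all worlds. The valuation of $M^\star$ is defined $\Delta^0_1$ from $V$ by $V^\star(w,\theta)=V(\mathrm{ST}_w(\theta))$.

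I would then check that $Z$ is still a bisimulation between $M^\star$ and $N^\star$. This holds because the atomic facts and the accessibility relation are the same: $wRv$ iff $V(r(\overline w,\overline v))=1$, and $V^\star(w,p_n)=V(P_n(\overline w))$. With that in place, Proposition~\ref{bisimimmeq} applies to $M^\star$ and $N^\star$, and no induction beyond the $\mathrm{I}\Sigma^0_1$ already used there is needed.
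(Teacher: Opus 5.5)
Your proposal is correct and follows essentially the paper's route. Both arguments apply the Soundness Theorem to get $\vDash\forall x(\varphi(x)\leftrightarrow\mathrm{ST}_{x}(\psi))$, then argue by induction on $\psi$ that $Z$ preserves the truth of $\mathrm{ST}_{w}(\psi)$. The only difference is packaging: the paper reruns the induction of Proposition~\ref{bisimimmeq} directly on the standard translation, whereas you pass to the Kripke models $M^\star,N^\star$ of Lemma~\ref{frightk} (whose valuation $V^\star(w,\theta)=V(\mathrm{ST}_w(\theta))$ is indeed $\Delta^0_1$ in $V$) and cite Proposition~\ref{bisimimmeq}.
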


\begin{proof}
Fix two models $M=(W, R, V)$ and $M^{\prime}=(W^{\prime}, R^{\prime}, V^{\prime})$. Let $w\in M$ and $w^{\prime}\in M^{\prime}$ and let $Z\subseteq W\times W^{\prime}$ be the bisimulation such that $wZw^{\prime}$. By the assumption and the Soundness Theorem for first-order logic, there exists $\psi\in\mathrm{Fml}$ such that for every model $M^{\prime\prime}=(W^{\prime\prime}, R^{\prime\prime}, V^{\prime\prime})$ and every $w^{\prime\prime}\in W^{\prime\prime}$, $M^{\prime\prime}\vDash\varphi(w^{\prime\prime})\leftrightarrow\mathrm{ST}_{w^{\prime\prime}}(\psi)$. Thus it is enough to show the following by induction on the length of $\psi$:
\begin{align*}
\forall w\in W\forall w^{\prime}\in M^{\prime}(wZw^{\prime}\rightarrow(V(\mathrm{ST}_{w}(\psi))=1\leftrightarrow V^{\prime}(\mathrm{ST}_{w^{\prime}}(\psi))=1)).
\end{align*}

\textit{Base case}: Let $\psi\equiv p\in\mathrm{Prop}$. Then $\mathrm{ST}_{x}(p)\equiv P(x)$. Fix $w\in W$ and $w^{\prime}\in W^{\prime}$ with $wZw^{\prime}$. By the bisimulation $Z$, $V(w, p)=1$ if and only if $V(w^{\prime}, p)=1$. Thus $V(\mathrm{ST}_{w}(p))=1$ if and only if $V(\mathrm{ST}_{w^{\prime}}(p))=1$.  
\vspace{2mm}

\textit{Induction step}: Fix $\psi\in\mathrm{Fml}$. Assume that for all $w\in W$ and $w^{\prime}\in W^{\prime}$ with $wZw^{\prime}$, $V(\mathrm{ST}_{w}(\psi))=1\leftrightarrow V^{\prime}(\mathrm{ST}_{w^{\prime}}(\psi))=1$. Fix $w\in W$ and $w^{\prime}\in W^{\prime}$ with $wZw^{\prime}$. Then, for $\Box\psi$, $\mathrm{ST}_{x}(\Box\psi)\equiv\forall y(r(x, y)\rightarrow\mathrm{ST}_{y}(\psi))$. Assume that $V(\mathrm{ST}_{w}(\Box\psi))=1$. Then for all $v\in wR$, $V(\mathrm{ST}_{v}(\psi))=1$. Take $v^{\prime}\in w^{\prime}R^{\prime}$. Then by the bisimulation $Z$, there exists $v\in wR$ such that $vZv^{\prime}$. Thus, by the assumption, $V^{\prime}(\mathrm{ST}_{v^{\prime}}(\psi))=1$. Thus $V^{\prime}(\mathrm{ST}_{w^{\prime}}(\Box\psi))=1$. Conversely, in a similar way, $V^{\prime}(\mathrm{ST}_{w^{\prime}}(\Box\psi))=1$ implies  $V(\mathrm{ST}_{w}(\Box\psi))=1$. Therefore $V(\mathrm{ST}_{w}(\Box\psi))=1\leftrightarrow V^{\prime}(\mathrm{ST}_{w^{\prime}}(\Box\psi))=1$ holds. By induction, this completes the proof.
\end{proof}

\begin{cor}[$\mathrm{RCA}_0$]
For every $\varphi(x)\in\mathrm{Fml}_{\mathcal{FO}}$ which has no free number variables except $x$, if  there exists $\psi\in\mathrm{Fml}$ such that $\vDash\forall x(\varphi(x)\leftrightarrow\mathrm{ST}_{x}(\psi))$ holds, then $\varphi(x)$ is bisimulation invariant.
\end{cor}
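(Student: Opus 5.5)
The plan is to repeat the argument of Proposition~\ref{2im1vB1}, with the semantic hypothesis replacing the appeal to the Soundness Theorem. Fix models $M=(W,R,V)$ and $M^{\prime}=(W^{\prime},R^{\prime},V^{\prime})$, worlds $w\in W$ and $w^{\prime}\in W^{\prime}$, and a bisimulation $Z$ with $wZw^{\prime}$. Take $\psi\in\mathrm{Fml}$ with $\vDash\forall x(\varphi(x)\leftrightarrow\mathrm{ST}_{x}(\psi))$. Via the identification of Kripke models with standard models on $\mathcal{L}_{\mathcal{FO}}$ (Lemma~\ref{frightk}), both $M$ and $M^{\prime}$ are standard models. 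Hence the hypothesis gives
\begin{align*}
V(\varphi(w))=1\leftrightarrow V(\mathrm{ST}_{w}(\psi))=1 \quad\text{and}\quad V^{\prime}(\varphi(w^{\prime}))=1\leftrightarrow V^{\prime}(\mathrm{ST}_{w^{\prime}}(\psi))=1.
\end{align*}
This follows by unfolding the clause for $\forall$ in the definition of a standard model and instantiating at $\overline{w}$ and $\overline{w^{\prime}}$ respectively.

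It therefore suffices to show that $V(\mathrm{ST}_{w}(\psi))=1\leftrightarrow V^{\prime}(\mathrm{ST}_{w^{\prime}}(\psi))=1$. Since we treat $V(w,\psi)$ and $V(\mathrm{ST}_{w}(\psi))$ as synonymous, this is exactly the modal equivalence of $(M,w)$ and $(M^{\prime},w^{\prime})$ restricted to $\psi$. That is given directly by Proposition~\ref{bisimimmeq}, or equivalently by the induction on the length of $\psi$ carried out in the proof of Proposition~\ref{2im1vB1}. The statement being proved by that induction is $\Pi^0_1$ in the parameters $V$, $V^{\prime}$ and $Z$, so it is available in $\mathrm{RCA}_0$. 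Chaining the three biconditionals yields $M\vDash\varphi(w)\leftrightarrow M^{\prime}\vDash\varphi(w^{\prime})$.

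The only point needing care is the conversion between a Kripke model and a standard model. The valuation on $\mathcal{L}^{M}_{\mathcal{FO}}$-sentences must exist as a set so that $\vDash$ can be applied to it. This is exactly what the identification following Lemma~\ref{frightk} provides, and it is already implicitly used in the definition of bisimulation invariance, where $M\vDash\varphi(w)$ is written for Kripke models. So I expect no genuine obstacle: the corollary is essentially Proposition~\ref{2im1vB1} with the soundness step deleted.
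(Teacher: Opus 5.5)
Your proposal is correct and follows the paper's route. The paper proves the corollary simply by citing Proposition~\ref{2im1vB1}, and the intended reading is, as you saw, to rerun that proof from the point where the Soundness Theorem has produced exactly your semantic hypothesis; applying the proposition as a black box would instead require completeness to pass from $\vDash$ to $\mathrm{Pbl}_{\mathcal{FO}}$, which is not available in $\mathrm{RCA}_0$.
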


\begin{proof}
By Proposition~\ref{2im1vB1}.
\end{proof}

To show the semantic form over $\mathrm{RCA}_0$, we define the following objects.

\begin{dfn}[$\mathrm{RCA}_0$]
Fix $l\in\mathbb{N}$. Then we denote the predicates $r^{l}(x, y)$ and $r^{\leq l}(x, y)$ by the following:
\begin{itemize}
    \item $r^{0}(x, y)\equiv(x=y)$,
    \item $r^{m}(x, y)\equiv\exists z(r^{m-1}(x,z)\land r(z, y))$ $(m\leq l)$,
    \item $r^{\leq l}(x, y) \equiv \bigvee_{k=0}^{l} r^{k}(x, y)$.
\end{itemize} 

Let $z$ be the number variable and $\varphi(\vec{x})\in\mathrm{Fml}_{\mathcal{FO}}$ that does not contain the variable $z$. Then we define $\varphi^{\leq l}_{z}(\vec{x})\equiv(\varphi(\vec{x}))^{\leq l}_{z}$ as follows:
\begin{itemize}
    \item $(r(x, y))^{\leq l}_{z}\equiv r^{\leq l}(z, x)\land r^{\leq l}(z, y)\land r(x, y)$,
    \item $(P_{i}(x))^{\leq l}_{z}\equiv r^{\leq l}(z, x)\land P_{i}(x)$ (for all $i$),
    \item $(\varphi\rightarrow\psi)^{\leq l}_{z}\equiv(\varphi)^{\leq l}_{z}\rightarrow(\psi)^{\leq l}_{z}$,
    \item $(\forall y\varphi(\vec{x}, y))^{\leq l}_{z}\equiv\forall y(r^{\leq l}(z, y)\rightarrow(\varphi(\vec{x}, y))^{\leq l}_{z})$,
%    \item $(\exists y\varphi(\vec{x}, y))^{\leq l}_{z}\equiv\exists y(r^{\leq l}(z, y)\land(\varphi(\vec{x}, y))^{\leq l}_{z})$.
\end{itemize}

Let $M=(W, R, V)$ be a model. We define $\pi:W^{<l}\rightarrow W$ as follows: $\pi(\sigma)\coloneq\sigma_{lh(\sigma)-1}$.
\end{dfn}

\begin{dfn}[$\mathrm{RCA}_0$~\cite{Otto2004b,MR2258711}]
Let $M=(W, R, V)$ be a model and $a\in W$ and $l>0$. The \emph{$l$-locally tree unravelling} of $M$ from $a\in W$ is a model $M^{l}[a]\coloneq(W^{l}[a], R^{l}[a], V^{l}[a])$ where
\begin{itemize}
    \item $W^{l}[a]\coloneq\{\sigma\in W^{< l}\mid\sigma_{0}=a\land\forall i<lh(\sigma)-1(\sigma_{i}R\sigma_{i+1})\},$
    \item $R^{l}[a]\coloneq\{(\sigma, \tau)\in W^{l}[a]\times W^{l}[a]\mid\sigma\preceq\tau\land (lh(\sigma)+1=lh(\tau))\land\sigma_{lh(\sigma)-1}R\tau_{lh(\tau)-1}\},$
    \item $V^{l}[a](\varphi(\sigma^{0}, \sigma^{1}, \dots, \sigma^{s}))\coloneq V(\varphi^{\leq l}_{a}(\pi(\sigma^{0}), \pi(\sigma^{1}), \dots, \pi(\sigma^{s})))$\\
\vspace{-3mm}

\text{for all $\varphi(\vec{x})\in\mathrm{Fml}_{\mathcal{FO}}$ and $\sigma^{0}, \sigma^{1}, \dots, \sigma^{s}\in W^{l}[a]$}.
\end{itemize}
\end{dfn} 

The $l$-locally tree unravelling of $M$ from $a$ is a tree model with root $a$ and it consists of all nodes reachable from $a$ on (directed, forward) $R$-paths of length up to $l$. %Note that we also identify the Kripke model $M^{l}[a]$ with the standard model on $\mathcal{L}_{\mathcal{FO}}$. Moreover, b
By the tree unravelling, we often identify elements of $M$ whose path length along $R$ from $a$ is less than $l$ with elements of $M^{l}[a]$.

\begin{center}
\begin{tikzpicture}[
    scale=0.7,
    >=Stealth,
    thick
]

    % ================= 左側の図: M =================
    \begin{scope}[shift={(0,0)}]
        % 頂点 a
        \node[circle, fill, inner sep=1.5pt, label=below:{\large $a$}] (A1) at (0,0) {};
        
        % 逆錐の塗りつぶしと境界線
        \fill[gray!10] (0,0) -- (-1.5,3) -- (1.5,3) -- cycle;
        \draw[thick] (-1.5,3) -- (0,0) -- (1.5,3);
        
        % 中央のラベル M
        \node[font=\Large] at (0,2) {$M$};
    \end{scope}

    % ================= 右側の図: M^l[a] =================
    \begin{scope}[shift={(5,0)}]
        % 頂点 a
        \node[circle, fill, inner sep=1.5pt, label=below:{\large $a$}] (A2) at (0,0) {};
        
        % 上部が閉じた逆錐の塗りつぶしと境界線
        \fill[gray!10] (0,0) -- (-1.2,2.4) -- (1.2,2.4) -- cycle;
        \draw[thick] (0,0) -- (-1.2,2.4) -- (1.2,2.4) -- cycle;
        
        % 中央のラベル M^l[a]
        \node[font=\large] at (0,1.6) {$M^l[a]$};
    \end{scope}

\end{tikzpicture}
\end{center}

\begin{dfn}[$\mathrm{RCA}_0$~\cite{Otto2004b,MR2258711}]
Let $\varphi(x)\in\mathrm{Fml}_{\mathcal{FO}}$ which has no free number variables except $x$ and let $M=(W, R, V)$ be a model and $a\in W$ and $l>0$. $\varphi(x)$ is \emph{$l$-local} if $M\vDash\varphi(a)$ holds if and only if $M^{l}[a]\vDash\varphi(a)$.
\end{dfn}

We introduce the following as a formalization of the finite version of the Ehrenfeucht-Fra\"{i}ss\'{e} game~\cite{MR126370, MR69236} within $\mathrm{RCA}_0$.

\begin{dfn}[Finite Ehrenfeucht-Fra\"{i}ss\'{e} game, $\mathrm{RCA}_0$]
Let $M=(W, R, V)$, $N=(W^{\prime}, R^{\prime}, V^{\prime})$ and $q, k>0$. Let $f_{\mathrm{I}}:(W\cup W^{\prime})^{< 2q}\rightarrow W\cup W^{\prime}$ and $f_{\mathrm{II}}:(W\cup W^{\prime})^{< 2q}\rightarrow W\cup W^{\prime}$. Then for $\sigma\in(W\cup W^{\prime})^{< 2q}$,
\begin{equation*}
\begin{split}
(f_{\mathrm{I}}\#f_{\mathrm{II}})\restriction2q=\sigma:\Leftrightarrow&\,(lh(\sigma)=2q)\land(\sigma_{0}=f_{\mathrm{I}}(\langle\rangle))\\&\land\forall i<q-1(\sigma_{2i}=f_{\mathrm{I}}(\sigma\restriction(2i)))\land \sigma_{2i+1}=f_{\mathrm{II}}((\sigma\restriction(2i+1)))\\&\land(\sigma_{0}\in W\rightarrow \forall i<q-1(\sigma_{2i}\in W\land\sigma_{2i+1}\in W^{\prime}))\\&\land(\sigma_{0}\in W^{\prime}\rightarrow \forall i<q-1(\sigma_{2i}\in W^{\prime}\land\sigma_{2i+1}\in W)).
\end{split}
\end{equation*}
\begin{itemize}
    \item $\mathrm{II}$ has a winning strategy in the $q$-round Ehrenfeucht-Fra\"{i}ss\'{e} game from $(M;N)$ if there exists $f_{\mathrm{II}}:(W\cup W^{\prime})^{< 2q}\rightarrow W\cup W^{\prime}$ such that for all $f_{\mathrm{I}}:(W\cup W^{\prime})^{< 2q}\rightarrow W\cup W^{\prime}$, there exists $\sigma\in(W\cup W^{\prime})^{< 2q}$ such that $(f_{\mathrm{I}}\#f_{\mathrm{II}})\restriction2q=\sigma$, and $E=\{\sigma_{2i}\mid i<q\}$ is isomorphic to $O=\{\sigma_{2i-1}\mid i<q\}$, that is, for all predicates $P_{i}(x)$ and $r(x, y)$, and all $\sigma_{2a}, \sigma_{2b}\in E$, $P_{i}(\sigma_{2a})\text{ holds}\iff P_{i}(\sigma_{2a-1})$ holds, and $r(\sigma_{2a}, \sigma_{2b})\text{ holds}\iff r(\sigma_{2a-1}, \sigma_{2b-1})$ holds. 
    \item $\mathrm{II}$ has a winning strategy in the $q$-round $k$-restricted Ehrenfeucht-Fra\"{i}ss\'{e} game from $(M;N)$ if there exists $f_{\mathrm{II}}:(W\cup W^{\prime})^{< 2q}\rightarrow W\cup W^{\prime}$ such that for all $f_{\mathrm{I}}:(W\cup W^{\prime})^{< 2q}\rightarrow W\cup W^{\prime}$, there exists $\sigma\in(W\cup W^{\prime})^{< 2q}$ such that $(f_{\mathrm{I}}\#f_{\mathrm{II}})\restriction2q=\sigma$, and for $k+1$ predicates $P_{0}(x),\dots,P_{k-1}(x)$ and $r(x, y)$, and all $\sigma_{2a}, \sigma_{2b}\in E$, $P_{i}(\sigma_{2a})$ holds $\iff P_{i}(\sigma_{2a-1})$ holds $(i<k)$ and $r(\sigma_{2a}, \sigma_{2b})$ holds $\iff r(\sigma_{2a-1}, \sigma_{2b-1})$ holds. 
\end{itemize}
\end{dfn}

\begin{dfn}[$\mathrm{RCA}_0$~\cite{MR1837791}]
Let $M=(W, R, V)$ and $M^{\prime}=(W^{\prime}, R^{\prime}, V^{\prime})$ be two models, $w\in W$, $w^{\prime}\in W^{\prime}$, and $l>0$. A sequence of binary relations $Z_{0}\subseteq\dots\subseteq Z_{l}\subseteq W\times W^{\prime}$ is called an $l$-bisimulation between $(M, w)$ and $(M^{\prime}, w^{\prime})$ if the following conditions are satisfied:
\begin{enumerate}
    \item $w Z_{0} w^{\prime}$.
    \item $\forall v\forall v^{\prime}(vZ_{l}v^{\prime}\rightarrow\forall p\in\mathrm{Prop} (V(v, p)=1\leftrightarrow V^{\prime}(v^{\prime}, p)=1))$.
    \item $\forall i<l\forall v\forall v^{\prime}\forall u(vZ_{i}v^{\prime}\land vRu\rightarrow\exists u^{\prime}(uZ_{i+1}u^{\prime}\land v^{\prime}R^{\prime}u^{\prime}))$ (the forth condition).
    \item $\forall i<l\forall v\forall v^{\prime}\forall u(vZ_{i}v^{\prime}\land v^{\prime}R^{\prime}u^{\prime}\rightarrow\exists u(uZ_{i+1}u^{\prime}\land vRu))$ (the back condition).
\end{enumerate}
$(M, w)$ and $(M^{\prime}, w^{\prime})$ are $l$-bisimilar (we denote $(M, w)\,\underline{\xleftrightarrow{}}^{l}\,(M^{\prime}, w^{\prime})$) if there exists a $l$-bisimulation $Z_{0}\subseteq\dots\subseteq Z_{l}\subseteq W\times W^{\prime}$ between $(M, w)$ and $(M^{\prime}, w^{\prime})$.
\vspace{1mm}

Let $k>0$. A sequence of binary relations $Z_{0}\subseteq\dots\subseteq Z_{l}\subseteq W\times W^{\prime}$ is called a $k$-restricted $l$-bisimulation between $(M, w)$ and $(M^{\prime}, w^{\prime})$ if the above conditions $(1)$, $(3)$, $(4)$ and the following condition $(2^{\prime})$ are satisfied:
\begin{align*}
(2^{\prime})\,\forall v\forall v^{\prime}(vZ_{l}v^{\prime}\rightarrow\forall p\in\{p_{0}, \dots, p_{k-1}\} (V(w, p)=1\leftrightarrow V^{\prime}(w^{\prime}, p)=1)).
\end{align*}

$(M, w)$ and $(M^{\prime}, w^{\prime})$ are $k$-restricted $l$-bisimilar (we denote $(M, w)\,\underline{\xleftrightarrow{}}^{k, l}_{*}\,(M^{\prime}, w^{\prime})$) if there exists a $k$-restricted $l$-bisimulation $Z_{0}\subseteq\dots\subseteq Z_{l}\subseteq W\times W^{\prime}$ between $(M, w)$ and $(M^{\prime}, w^{\prime})$.
\end{dfn}

\begin{dfn}[$\mathrm{RCA}_0$~\cite{MR1837791}]
Let $\varphi(x)\in\mathrm{Fml}_{\mathcal{FO}}$ which has no free number variables except $x$ and $l>0$. $\varphi(x)$ is \emph{$l$-bisimulation invariant} if for all models $M$ and $N$, all worlds $w\in M$, $v\in N$, and all $l$-bisimulations $Z$ between $M$ and $N$ such that $wZv$, we have $M\vDash\varphi(w)$ if and only if $N\vDash\varphi(v)$.
%$\varphi(x)$ is \emph{$k$-restricted $l$-bisimulation invariant} if for all models $M$ and $N$, all worlds $w\in M$, $v\in N$, and all $k$-restricted $l$-bisimulations $Z$ between $M$ and $N$ such that $wZv$, we have $M\vDash\varphi(w)$ if and only if $N\vDash\varphi(v)$.
\end{dfn}

To show the semantic form over $\mathrm{RCA}_0$, we show the following lemmas. For $\psi\in\mathrm{Fml}_{\mathcal{FO}}$ whose quantifier rank is $q$, we denote $\mathrm{Qr}(\psi)=q$.

\begin{lem}[$\mathrm{RCA}_0$~\cite{MR126370, MR69236}]\label{efgame}
Let $M=(W, R, V)$, $N=(W^{\prime}, R^{\prime}, V^{\prime})$, $a\in W\cap W^{\prime}$, and $q>0$. If $\mathrm{II}$ has a winning strategy in the $q$-round Ehrenfeucht-Fra\"{i}ss\'{e} game from $(M;N)$, then for all $\psi(x)\in\mathrm{Fml}_{\mathcal{FO}}$ which has no free number variables except $x$ and $\mathrm{Qr}(\psi)\leq q$, $M\vDash\psi(a)\Leftrightarrow N\vDash\psi(a)$.
\end{lem}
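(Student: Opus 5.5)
The plan is to prove a stronger statement by external induction on quantifier rank, formalized inside $\mathrm{RCA}_0$ as a single induction over a bounded quantity. The strengthened claim tracks partial plays. Fix a winning strategy $f_{\mathrm{II}}$ for $\mathrm{II}$ in the $q$-round game. For $n\leq q$, call a position $\sigma$ of length $2n$ \emph{consistent} if it arises from $f_{\mathrm{II}}$ against some moves of $\mathrm{I}$; in such a position the moves of $\mathrm{II}$ are given by $f_{\mathrm{II}}$. For each such $\sigma$, write $\vec{e}$ for the elements played in $M$ and $\vec{o}$ for the corresponding elements played in $N$; these are read off from $\sigma$ according to whether $\sigma_0\in W$ or $\sigma_0\in W'$. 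The claim is then
\begin{align*}
\forall n\leq q\,\forall\sigma\,\forall\psi(\vec{x})\bigl(\sigma\text{ consistent of length }2(q-n')\land \mathrm{Qr}(\psi)\leq n'\ \text{for } n'=n\rightarrow(V(\psi(\vec{e}))=1\leftrightarrow V'(\psi(\vec{o}))=1)\bigr).
\end{align*}
Here $\psi$ is allowed to have free variables among $\vec{x}$, with $a$ included as a fixed parameter played at the start. To build $a$ in, the plan is to consider plays where $\mathrm{I}$ opens with $a$ and $f_{\mathrm{II}}$ answers $a$. This needs a small adjustment, since the lemma as stated does not force $\mathrm{II}$'s reply to $a$ to be $a$. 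I would reduce to the standard formulation by treating $a$ as already matched, i.e. work with the pair $(a,a)$ prepended. In the game on $(M;N)$ with $a$ common to both, the intended reading is that this pair is the initial configuration.

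The claim is proved by induction on $n$, with an inner induction on the length of $\psi$. Both inductions are over formulas whose matrix is $\Delta^0_1$ in the given $V,V'$ and $f_{\mathrm{II}}$. For fixed parameters, the statement ``for all consistent $\sigma$ of length $2m$ and all $\psi$ of rank $\leq n$, equivalence holds'' is $\Pi^0_1$. So I would use $\Pi^0_1$-induction, which is available in $\mathrm{RCA}_0$ because it is equivalent to $\mathrm{I}\Sigma^0_1$.

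For the base case $n=0$, a quantifier-free $\psi$ is a Boolean combination of atoms $P_i(x_j)$, $r(x_j,x_k)$ and $x_j=x_k$. For the full play extending $\sigma$ that $\mathrm{I}$ can force, the winning condition says exactly that the map $\vec{e}\mapsto\vec{o}$ preserves these atoms. Equality is also needed; I would add it to the isomorphism clause, since the paper's definition should be read as including equality. The conclusion for Boolean combinations then follows by a sub-induction using the clauses $V(\bot)=0$ and $V(\varphi\to\psi)$.

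The inductive step handles $\psi=\forall y\,\theta(\vec{x},y)$ with $\mathrm{Qr}(\theta)\leq n-1$. Suppose $V(\forall y\,\theta(\vec{e},y))=1$ but $V'(\forall y\,\theta(\vec{o},y))=0$. The $\forall$-clause of a standard model gives $b'\in W'$ with $V'(\theta(\vec{o},b'))=0$. We let $\mathrm{I}$ play $b'$ at the current position, and $f_{\mathrm{II}}$ answers some $b\in W$; the induction hypothesis applied to $\theta$ at the extended position gives $V(\theta(\vec{e},b))=0$, a contradiction. The symmetric direction is handled the same way.

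The main obstacle is that the game definition fixes the side of every move by $\sigma_0$, which blocks the step where $\mathrm{I}$ must move in $N$ while playing in $M$. The definition only permits $\mathrm{I}$ to play in the structure of $\sigma_0$ throughout. To get around this, I would exploit the fact that $\mathrm{II}$ wins regardless of which side $\mathrm{I}$ starts on. The implication $\psi(\vec e)\Rightarrow\psi(\vec o)$ would be argued using plays starting in $N$, and its converse using plays starting in $M$, after pushing negations inward so that each quantifier only needs challenges from one side. This gives prenex-style bookkeeping by polarity. If that fails, I would instead reinterpret the game with the standard side-switching rule, which is clearly intended.
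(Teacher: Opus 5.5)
Your final route is correct and has the same skeleton as the paper's proof. That route pre-matches $(a,a)$, adds equality to the winning condition, lets $\mathrm{I}$ move in either structure, and inducts over consistent positions of length $2(q-n)$ against formulas of rank $\le n$. As in the paper, the quantifier-free case comes from the partial-isomorphism winning condition (the paper's first induction, via the replies $c_{b}$). The $\forall$-case likewise lets $\mathrm{I}$ play a counter-witness and applies the hypothesis to $\mathrm{II}$'s reply.

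Your version is tighter than the paper's. The paper's second induction is stated only for $\psi(a)$. Its step then assumes $M\vDash\psi(a,\vec b)\Leftrightarrow N\vDash\psi(a,\vec c_b)$ with the whole tuple $\vec b$ chosen on one side, and the stated hypothesis supplies that only when $\psi$ is quantifier-free. Indexing the claim by positions is what makes nested quantifiers go through.

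Drop the polarity-bookkeeping option, though. It cannot succeed, because the lemma is false for the one-sided game. Take $r$ as strict order, $M=(\omega,<)$, $N=(\omega+1,<)$, $a=0$, $q=2$.
\begin{itemize}
\item If $\mathrm{I}$ must stay in $M$, $\mathrm{II}$ answers by the identity.
\item If $\mathrm{I}$ must stay in $N$, $\mathrm{II}$ answers $n\mapsto n$, and the top element by $2$ in round one. In round two, $\mathrm{II}$ answers the top by $b_1+1$ when $b_1\in\omega$. If $b_1$ was the top, $\mathrm{II}$ answers any $n$ with $0<n<\omega$ by $1$, and $0$ and the top by $0$ and $2$.
\end{itemize}
Every resulting map, together with $0\mapsto 0$, preserves $r$ and $=$. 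Yet $\forall y\exists z\,r(y,z)$ has rank $2$, holds in $M$, and fails in $N$. In your own terms: for $\psi(\vec e)\Rightarrow\psi(\vec o)$ in negation normal form, the $\forall$'s need $\mathrm{I}$ to move in $N$ and the $\exists$'s need $\mathrm{I}$ to move in $M$. So any quantifier alternation forces a side switch within a single play.

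The side-switching reading is therefore a necessity, not a fallback, exactly like your amendments for $a$ and for equality. Without those, the literal statement is refuted by $\psi(x)\equiv P_0(x)$ on two structures isomorphic via a map moving $a$. It is also refuted by $\psi(x)\equiv\exists y\,\neg(y=x)$ on a one-point versus a two-point model.

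The paper makes these same amendments tacitly; for instance, it writes ``Of course, $M\vDash P_i(a)\Leftrightarrow N\vDash P_i(a)$''. The application in Lemma~\ref{bisimimlloc} also needs full rank-$q$ equivalence, so your reading is the one that matters downstream.
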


\begin{proof}
For all $b_{1},\dots,b_{k}\in W\cup W^{\prime}$, $c_{b_{1}}\coloneq f_{\mathrm{II}}(\langle{b_{1}}\rangle)$ and $c_{b_{i}}\coloneq f_{\mathrm{II}}(\langle{b_{1}, c_{b_{i}},\dots, b_{i-1}, c_{b_{i-1}}, b_{i}}\rangle)$ for all $i<k+1$ $(k\leq q)$. 
\vspace{2mm}

We show the following statement by induction on the length of a quantifier-free formula $\psi(x, x_{1},\dots,x_{k})$ which has no free number variables except $x, x_{1}, \dots, x_{k}$: for all $b_{1},\dots,b_{k}\in C$ $(k\leq q)$, $M\vDash\psi(a, b_{1},\dots, b_{k})\Leftrightarrow N\vDash\psi(a, c_{b_{1}},\dots, c_{b_{k}})$.
\begin{itemize}
    \item \textit{Base case}: For all $i$ and $b, d\in C$, clearly, $M\vDash P_{i}(b)\Leftrightarrow N\vDash P_{i}(c_{b})$ and $M\vDash r(b, d)\Leftrightarrow N\vDash r(c_{b}, c_{d})$. Of course, $M\vDash P_{i}(a)\Leftrightarrow N\vDash P_{i}(a)$ and $M\vDash r(a, d)\Leftrightarrow N\vDash r(a, c_{d})$ and $M\vDash r(b, a)\Leftrightarrow N\vDash r(c_{b}, a)$. 
    \item \textit{Induction step}: Assume that for all $b_{1},\dots,b_{k}\in C$, $\psi(x, x_{1},\dots,x_{k}),\theta(x, x_{1},\dots,x_{k})\in\mathrm{Fml}_{\mathcal{FO}}$ which has no free number variables except $x, x_{1}, \dots, x_{k}$, $M\vDash\psi(a, b_{1},\dots, b_{k})\Leftrightarrow N\vDash\psi(a, c_{b_{1}},\dots, c_{b_{k}})$ and $M\vDash\theta(a, b_{1},\dots, b_{k})\Leftrightarrow N\vDash\theta(a, c_{b_{1}},\dots, c_{b_{k}})$. Then $M\vDash\psi(a, b_{1},\dots, b_{k})\land\theta(a, b_{1},\dots, b_{k})\Leftrightarrow N\vDash\psi(a, c_{b_{1}},\dots, c_{b_{k}})\land\theta(a, c_{b_{1}},\dots, c_{b_{k}})$. Thus we showed the case of $\psi(x, x_{1},\dots,x_{k})\land\theta(x, x_{1},\dots,x_{k})$. 
\end{itemize}
By induction, we complete the proof.
\vspace{2mm}

Similarly, we may also show that for all $b_{1},\dots,b_{k}\in C^{\prime}$ $(k\leq q)$ and a quantifier-free formula $\psi(x, x_{1},\dots,x_{k})\in\mathrm{Fml}_{\mathcal{FO}}$ which has no free number variables except $x, x_{1}, \dots, x_{k}$, $M\vDash\psi(a, c_{b_{1}},\dots, c_{b_{k}})\Leftrightarrow N\vDash\psi(a, b_{1},\dots, b_{k})$. 
\vspace{2mm}

Next, we show the following statement by induction on the length of $\psi$ whose quantifier rank is less than $q$: $M\vDash\psi(a)\Leftrightarrow N\vDash\psi(a)$. 
\begin{itemize}
    \item \textit{Base case}: For the case of quantifier-free formulas, we show it by the above.
    \item \textit{Induction step}: Fix $\forall x_{1}\dots\forall x_{k}\psi(x, x_{1},\dots,x_{k})\in\mathrm{Fml}_{\mathcal{FO}}$ which has no free number variables except $x$ and $k\leq q$. We assume that for all $b_{1},\dots,b_{k}\in C$, $N\vDash\psi(a, c_{b_{1}},\dots,c_{b_{k}})$ if and only if $M\vDash\psi(a, b_{1},\dots, b_{k})$. Assume that $N\vDash\forall x_{1}\dots\forall x_{k}\psi(a, x_{1},\dots,x_{k})$. Take $b_{1},\dots,b_{k}\in C$. Then $N\vDash\psi(a, c_{b_{1}},\dots,c_{b_{k}})$. By the assumption, $M\vDash\psi(a, b_{1},\dots, b_{k})$. Thus $M\vDash\forall x_{1}\dots\forall x_{k}\psi(a, x_{1},\dots, x_{k})$. In a similar way, we show that $M\vDash\forall x_{1}\dots\forall x_{k}\psi(a, x_{1},\dots, x_{k})$ implies $N\vDash\forall x_{1}\dots\forall x_{k}\psi(a, x_{1},\dots,x_{k})$. 
\end{itemize}
By induction, we complete the proof.
\end{proof}

\begin{cor}[$\mathrm{RCA}_0$]\label{k-efgame}
Let $M=(W, R, V)$, $N=(W^{\prime}, R^{\prime}, V^{\prime})$, $a\in W\cap W^{\prime}$, and $q, k>0$. If $\mathrm{II}$ has a winning strategy in the $q$-round $k$-restricted Ehrenfeucht-Fra\"{i}ss\'{e} game from $(M;N)$, then for all $\psi(x)\in\mathrm{Fml}_{\mathcal{FO}}$ which has no free number variables except $x$, contains only $P_{0}(y), \dots, P_{k-1}(y)$, and $r(y, y^{\prime})$ as first-order predicates, and $\mathrm{Qr}(\psi)\leq q$, $M\vDash\psi(a)\Leftrightarrow N\vDash\psi(a)$.
\end{cor}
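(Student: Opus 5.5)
The plan is to derive Corollary~\ref{k-efgame} from Lemma~\ref{efgame} by restricting the language, rather than redoing the game argument. Let $\mathcal{L}_k$ be the sublanguage of $\mathcal{L}_{\mathcal{FO}}$ containing only $P_0,\dots,P_{k-1}$ and $r$. The reducts $M_k$ and $N_k$ are obtained by keeping the domains and relations and restricting $V$ and $V'$ to the closed terms and sentences of $\mathcal{L}_k$ with constants for the domain elements. These reducts exist in $\mathrm{RCA}_0$ by $\Delta^0_1$-comprehension, since membership of a sentence in $\mathcal{L}_k$ is primitive recursive. The conditions of a standard model are clearly preserved under this restriction. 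The one point to observe is that $V$ is only ever evaluated on $\mathcal{L}_k$-sentences when $\psi$ is an $\mathcal{L}_k$-formula, so $M\vDash\psi(a)\iff M_k\vDash\psi(a)$, and likewise for $N$.

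Next, we check that a winning strategy $f_{\mathrm{II}}$ for the $q$-round $k$-restricted game on $(M;N)$ is a winning strategy for the unrestricted $q$-round game on the reducts, read over $\mathcal{L}_k$. The winning condition of the restricted game states exactly that the chosen tuples $E$ and $O$ agree on $P_0,\dots,P_{k-1}$ and $r$, and these are all the atomic predicates of $\mathcal{L}_k$. The proof of Lemma~\ref{efgame} itself never uses predicates outside those appearing in $\psi$. Its base case only needs agreement on the atomic formulas occurring in $\psi$, and its induction steps for connectives and quantifiers are language-independent.

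The cleanest way to present this is therefore not to literally invoke Lemma~\ref{efgame} on reducts. Instead, we rerun its proof with the quantifier-free induction restricted to $\mathcal{L}_k$-formulas. The base case uses $P_i$ with $i<k$ and $r$, which is exactly what the $k$-restricted winning condition supplies. The connective step and the quantifier step, which proceed by induction on length up to quantifier rank $q$, are copied verbatim.

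The main obstacle is only bookkeeping in $\mathrm{RCA}_0$. The induction must be over a $\Delta^0_1$-definable set of formulas, namely $\mathcal{L}_k$-formulas of quantifier rank at most $q$. The statement being proved by induction is a $\Pi^0_1$ or $\Delta^0_1$ property in the parameters $V$, $V'$ and $f_{\mathrm{II}}$, so $\mathrm{I}\Sigma^0_1$ suffices, just as in Lemma~\ref{efgame}. With this in place, the corollary follows.
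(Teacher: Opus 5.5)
Your proposal takes the same route as the paper, whose entire proof is that the corollary ``can be shown similarly to Lemma~\ref{efgame}'', i.e.\ by rerunning that induction with the atomic base case restricted to $P_0,\dots,P_{k-1}$ and $r$, which is what you end up doing (the reduct detour is harmless but not needed). One caveat you inherit from the paper's proof of Lemma~\ref{efgame}: atoms involving the parameter $a$ (such as $P_i(a)$ or $r(a,b_j)$) are not controlled by the winning condition as defined, so the base case tacitly needs the game to start from the matched pair $(a,a)$; this holds automatically in the paper's applications, where $\mathrm{II}$ plays the identity map.
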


\begin{proof}
This can be shown similarly to Lemma~\ref{efgame}.
\end{proof}

Next, through the three Lemmas, we show that if $\varphi(x)\in\mathrm{Fml}_{\mathcal{FO}}$ is bisimulation invariant, then $\varphi(x)$ is $k$-restricted $l$-bisimulation invariant.

\begin{lem}[$\mathrm{RCA}_0$~\cite{Otto2004b,MR2258711}]\label{bisimimlloc}
Let $\varphi(x)\in\mathrm{Fml}_{\mathcal{FO}}$ which has no free number variables except $x$, $\mathrm{Qr}(\varphi)=q$. Let $l=2^{q}-1$. If $\varphi(x)$ is bisimulation invariant, then $\varphi(x)$ is $l$-local. 
\end{lem}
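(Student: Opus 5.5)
We follow Otto's elementary argument. Fix $M=(W,R,V)$, $a\in W$, and put $l=2^{q}-1$. We must show $M\vDash\varphi(a)\iff M^{l}[a]\vDash\varphi(a)$.

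The plan is to build two auxiliary models, both bisimilar to $(M,a)$ and to $(M^{l}[a],a)$, on which player $\mathrm{II}$ wins the $q$-round Ehrenfeucht--Fra\"{i}ss\'{e} game. Concretely, let $M^{\omega}[a]$ be the full tree unravelling of $M$ from $a$, with worlds the finite $R$-paths from $a$. Let $\hat{M}$ be the disjoint union of $M^{\omega}[a]$ with $q$ extra copies of $M^{\omega}[a]$, with root the original root. Let $\hat{N}$ be the disjoint union of $M^{l}[a]$ with $q$ copies of $M^{\omega}[a]$, with root the root of $M^{l}[a]$. Both models are $\Delta^0_1$-definable from $M$. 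Their first-order valuations are obtained from $V$ by the relativisation $(\cdot)^{\leq l}_{z}$, as in the definition of $V^{l}[a]$, or by a similar translation for the unravelling; this avoids needing arithmetical comprehension to build full valuations.

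The key steps, in order, are as follows.

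\begin{enumerate}
\item Write down explicit bisimulations. The projection $\pi$ gives a bisimulation between $M^{\omega}[a]$ and $M$, linking the roots. Adding disjoint copies of $M^{\omega}[a]$ does not change the root's behaviour, so $(\hat M,\mathrm{root})\,\underline{\xleftrightarrow{}}\,(M,a)$. The relation matching $\hat N$ with $M^{l}[a]$ is the identity on the $M^{l}[a]$ component. Here one must be careful: that component is a generated submodel of $\hat N$, and the disjoint copies are unreachable from its root. So the identity on the $M^{l}[a]$ part is a bisimulation. All these relations are $\Delta^0_1$ and exist in $\mathrm{RCA}_0$.
\item Apply bisimulation invariance of $\varphi$ to these bisimulations. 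This gives $M\vDash\varphi(a)\iff\hat M\vDash\varphi(\mathrm{root})$ and $M^{l}[a]\vDash\varphi(a)\iff\hat N\vDash\varphi(\mathrm{root})$.
\item Show that $\mathrm{II}$ has a winning strategy in the $q$-round game on $(\hat M,\mathrm{root};\hat N,\mathrm{root})$ with the roots pre-matched. Then Lemma~\ref{efgame} gives $\hat M\vDash\varphi(\mathrm{root})\iff\hat N\vDash\varphi(\mathrm{root})$, which completes the proof.
\end{enumerate}

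The strategy in step 3 is Otto's distance-halving strategy. In round $i$ (starting from $i=1$), $\mathrm{II}$ maintains the following invariant. Consider the current chosen tuples together with the roots. Their $2^{q-i}$-neighbourhoods in the tree structures (distance measured in the Gaifman graph of $r$) are isomorphic via a map respecting the pebbles.

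\begin{itemize}
\item If $\mathrm{I}$ plays an element within distance $2^{q-i}$ of an earlier pebble, $\mathrm{II}$ answers through that isomorphism.
\item Otherwise $\mathrm{II}$ answers with the corresponding node in a fresh copy of $M^{\omega}[a]$. The $q$ spare copies guarantee that a fresh copy is always available.
\end{itemize}

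Since $l=2^{q}-1$, the truncation depth of $M^{l}[a]$ is invisible at the radii used. Near the root, neighbourhoods agree up to depth $2^{q-1}+\cdots<2^{q}$. Everything is finitary: the strategy $f_{\mathrm{II}}$ is a $\Delta^0_1$ function of the finite play and the path codes, so it exists in $\mathrm{RCA}_0$. The verification that the final positions form a partial isomorphism is a bounded induction on rounds.

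The main obstacle is step 3. One must define the neighbourhood isomorphisms uniformly and computably from the tree codes and verify the halving invariant with only $\Sigma^0_1$-induction. In trees this is manageable, because an isomorphism of neighbourhoods is determined by shifting path suffixes. My first attempt would be to specify $\mathrm{II}$'s answer purely syntactically, by rewriting the path code relative to the nearest pebble. I would then prove the invariant by induction on $i\le q$ for the $\Delta^0_0$ statement "the partial map on pebbles extends to a suffix-shift isomorphism of the radius-$2^{q-i}$ balls."

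A secondary point is checking that the valuations on $\hat M$ and $\hat N$ satisfy Tarski's clauses. This should follow as in Lemma~\ref{frightk}, by induction on formulas.
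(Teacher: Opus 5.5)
\paragraph{Gap in step 3.} With your choice of spare components, $\mathrm{II}$ in general has \emph{no} winning strategy in the $q$-round game on $(\hat M,\mathrm{root};\hat N,\mathrm{root})$. So the claim in step 3 is false, not merely unproved.

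The cause is that all spare components of $\hat M$ are copies of the full unravelling $M^{\omega}[a]$. Nothing in $\hat M$ therefore reproduces the truncation boundary of the $M^{l}[a]$-component of $\hat N$. For instance, let $M$ be a single reflexive point (or any model in which every world has an $R$-successor) and $q\geq 2$. Every element of $\hat M$ then has an $r$-successor, but the deepest nodes of the finite tree $M^{l}[a]$ have none. Hence the quantifier-rank-$2$ sentence $\exists y\,\neg\exists z\,r(y,z)$ holds in $\hat N$ and fails in $\hat M$.

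In terms of your strategy:
\begin{itemize}
\item $\mathrm{I}$ plays a leaf of the $M^{l}[a]$-component of $\hat N$, at depth $l=2^{q}-1>2^{q-1}$ from the root.
\item This is a far move, so you answer with the corresponding node of a fresh copy of $M^{\omega}[a]$.
\item That node has a successor; $\mathrm{I}$ plays it, and $\mathrm{II}$ cannot reply.
\end{itemize}
Your remark that the truncation is invisible near the root only covers moves close to existing pebbles. A far move can land exactly on the boundary.

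\paragraph{The fix.} The missing ingredient is what Otto's construction, and the paper's proof, include: $q$ spare copies of \emph{both} the (tree) model $M$ and $M^{l}[a]$, on \emph{both} sides. That is, $N=\bigsqcup_{i<q}M_{A_i}\sqcup\bigsqcup_{i<q}M_{B_i}\sqcup M$ and $N'=\bigsqcup_{i<q}M_{A_i}\sqcup\bigsqcup_{i<q}M_{B_i}\sqcup M^{l}[a]$. A far move is then answered by the same node in a fresh component of the same kind. Your step 1 is unaffected, because the extra components are unreachable from the root. With this change your outline is the paper's argument.

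\paragraph{A second issue in the invariant.} Your invariant, ``the $2^{q-i}$-balls around the tuples are isomorphic'', uses the same $2^{q-i}$ as the near/far threshold, and it cannot be maintained literally. A far move at distance just above $2^{q-i}$ has a ball meeting an existing pebble's ball, whereas the answer in a fresh copy does not.

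An invariant that does work with $l=2^{q}-1$ is the following.
\begin{itemize}
\item Group the pebble pairs into clusters. Each cluster consists of pairs of the same node in a fixed pair of components of the same kind; the root cluster uses the two root components.
\item When $j$ rounds remain, pebbles of different clusters are more than $2^{j}$ apart on both sides.
\item Root-cluster pebbles have depth at most $2^{q}-2^{j}$.
\end{itemize}
A move within distance $2^{j-1}$ of some pebble of a cluster is answered by the same node in that cluster's paired component. Any other move is answered by the same node in a fresh component of the same kind. The triangle inequality, together with $2^{q}-2^{j}+2^{j-1}=2^{q}-2^{j-1}$, preserves the invariant. At the end, root-cluster pebbles have depth at most $l$, where $M$ and $M^{l}[a]$ agree on all atomic facts.
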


\begin{proof}
To show that $\varphi(x)$ is $l$-local, take $M=(W, R, V)$ and $a\in W$. Then we may assume that $M$ is a tree model with root $a$ by considering the tree unravelling. Next, for all $i<q$, we define $M_{A_{i}}=(A_{i}, R^{A_{i}}, V^{A_{i}})$ and $M_{B_{i}}=(B_{i}, R^{B_{i}}, V^{B_{i}})$ where
\begin{equation*}
\begin{split}
A_{i}\coloneq \{\langle{w, i, 0}\rangle\in W\times q\times2\},\, R^{A_{i}}\coloneq\{(\langle{w, i, 0}\rangle, \langle{w^{\prime}, i, 0}\rangle)\mid wRw^{\prime}\},
\end{split}
\end{equation*}
\begin{equation*}
\begin{split}
V^{A_{i}}(\varphi(\langle{w, i, 0}\rangle))\coloneq V(\varphi(w)) \text{ for all $\varphi(x)\in\mathrm{Fml}_{\mathcal{FO}}$ and $\langle{w, i, 0}\rangle\in A_{i}$},
\end{split}
\end{equation*}
\begin{equation*}
\begin{split}
B_{i}\coloneq \{\langle{w, i, 1}\rangle\in W^{l}[a]\times q\times2\},\, R^{B_{i}}\coloneq\{(\langle{w, i, 1}\rangle, \langle{w^{\prime}, i, 1}\rangle)\mid wR^{l}[a]w^{\prime}\},
\end{split}
\end{equation*}
\begin{equation*}
\begin{split}
V^{B_{i}}(\varphi(\langle{w, i, 1}\rangle))\coloneq V^{l}[a](\varphi(w)) \text{ for all $\varphi(x)\in\mathrm{Fml}_{\mathcal{FO}}$ and $\langle{w, i, 1}\rangle\in B_{i}$}.
\end{split}
\end{equation*}

Then we define two disjoint unions of models $N=\bigsqcup_{i<q}M_{A_{i}}\sqcup \bigsqcup_{i<q}M_{B_{i}}\sqcup M$ and $N^{\prime}=\bigsqcup_{i<q}M_{A_{i}}\sqcup \bigsqcup_{i<q}M_{B_{i}}\sqcup M^{l}[a]$ (Note that the evaluation of a formula in a disjoint union of models is determined by the evaluations under each valuation of formulas effectively constructed from the original one. See \cite{MR2060932}).The following figures represent the pointed models $(N, a)$ and $(N^{\prime}, a)$, respectively.
%by the following:% for every $b\in E\cup W\cup W^{l}[a]$ and $\varphi\in\mathrm{Fml}$,
%\begin{equation*}
%C=E\sqcup W,\, R^{C}=Q\sqcup R,
%\end{equation*}
%\begin{equation*}
%U=V^{A_{i}}\sqcup V^{B_{i}}\sqcup V,
%\end{equation*}
%\begin{equation*}
%U(b, \varphi)=1\Leftrightarrow(b\in A_{i}\rightarrow V^{A_{i}}(b, \varphi)=1)\land(b\in B_{i}\rightarrow V^{B_{i}}(b, \varphi)=1)\land (b\in W\rightarrow V(b, \varphi)=1),
%\end{equation*}
%\begin{equation*}
%C^{\prime}=E\sqcup W^{l}[a],\, R^{C^{\prime}}=Q\sqcup R^{l}[a],
%\end{equation*}
%\begin{equation*}
%U^{\prime}=V^{A_{i}}\sqcup V^{B_{i}}\sqcup V^{l}[a].
%\end{equation*}
%\begin{equation*}
%U^{\prime}(b, \varphi)=1\Leftrightarrow(b\in A_{i}\rightarrow V^{A_{i}}(b, \varphi)=1)\land(b\in B_{i}\rightarrow V^{B_{i}}(b, \varphi)=1)\land (b\in W^{l}[a]\rightarrow V^{l}[a](b, \varphi)=1).
%\end{equation*}

\begin{center}
\begin{tikzpicture}[
    scale=0.85, 
    transform shape,
    >=Stealth,
    thick,
    % 文字のスタイル
    math_label/.style={font=\Large},
    q_copies/.style={font=\large, text height=1.5ex, text depth=.25ex}
]

    % ================= 左側のグループ: M (一定の間隔 2.5 で配置) =================
    % 1つ目の M (無限の錐)
    \coordinate (M1_apex) at (0,0);
    \fill[gray!10] (M1_apex) -- (-1,2) -- (1,2) -- cycle;
    \draw[thick] (-1,2) -- (M1_apex) -- (1,2);
    \node[math_label] at (0,1.2) {$M$};
    
    % 2つ目の M
    \coordinate (M2_apex) at (2.5,0);
    \fill[gray!10] (M2_apex) -- (1.5,2) -- (3.5,2) -- cycle;
    \draw[thick] (1.5,2) -- (M2_apex) -- (3.5,2);
    \node[math_label] at (2.5,1.2) {$M$};
    
    % 3つ目の M (頂点 a があるもの)
    \coordinate (M3_apex) at (5.0,0);
    \fill[gray!10] (M3_apex) -- (4,2) -- (6,2) -- cycle;
    \draw[thick] (4,2) -- (M3_apex) -- (6,2);
    \node[math_label] at (5.0,1.2) {$M$};
    % 頂点 a と点
    \node[circle, fill, inner sep=1.2pt, label=below:{\large $a$}] at (M3_apex) {};

    % M の下部の波括弧と数
    \draw[decorate, decoration={brace, amplitude=10pt, mirror}, thick] 
        (-0.2,-0.4) -- (2.7,-0.4) 
        node[q_copies, midway, below=12pt] {$q$ copies};

    % ================= 右側のグループ: M^l (3番目の錐に近づけて等間隔 2.5 で配置) =================
    % 1つ目の M^l (上が閉じた有限の錐 - 8.0 から 7.5 に近づけました)
    \coordinate (Ml1_apex) at (7.5,0);
    \fill[gray!10] (Ml1_apex) -- (6.7,1.6) -- (8.3,1.6) -- cycle;
    \draw[thick] (Ml1_apex) -- (6.7,1.6) -- (8.3,1.6) -- cycle;
    \node[math_label] at (7.5,1.0) {$M^l$};
    
    % 2つ目の M^l (10.5 から 10.0 に近づけました)
    \coordinate (Ml2_apex) at (10.0,0);
    \fill[gray!10] (Ml2_apex) -- (9.2,1.6) -- (10.8,1.6) -- cycle;
    \draw[thick] (Ml2_apex) -- (9.2,1.6) -- (10.8,1.6) -- cycle;
    \node[math_label] at (10.0,1.0) {$M^l$};

    % M^l の下部の波括弧と数 (位置の変更に合わせて波括弧の範囲も 7.3 から 10.2 に調整)
    \draw[decorate, decoration={brace, amplitude=10pt, mirror}, thick] 
        (7.3,-0.4) -- (10.2,-0.4) 
        node[q_copies, midway, below=12pt] {$q$ copies};

    % ================= 注釈の追加 =================
    \node[below=1.5cm of M3_apex, font=\small\itshape] {Note: This diagram illustrates pointed model $(N, a)$.};

\end{tikzpicture}
\end{center}

\begin{center}
\begin{tikzpicture}[
    scale=0.85, 
    transform shape,
    >=Stealth,
    thick,
    % 文字のスタイル
    math_label/.style={font=\Large},
    q_copies/.style={font=\large, text height=1.5ex, text depth=.25ex}
]

    % ================= 左側のグループ =================
    % 1つ目の M (無限の錐)
    \coordinate (M1_apex) at (0,0);
    \fill[gray!10] (M1_apex) -- (-1,2) -- (1,2) -- cycle;
    \draw[thick] (-1,2) -- (M1_apex) -- (1,2);
    \node[math_label] at (0,1.2) {$M$};
    
    % 2つ目の M (もとの無限錐 M)
    \coordinate (M2_apex) at (2.5,0);
    \fill[gray!10] (M2_apex) -- (1.5,2) -- (3.5,2) -- cycle;
    \draw[thick] (1.5,2) -- (M2_apex) -- (3.5,2);
    \node[math_label] at (2.5,1.2) {$M$};
    
    % 3つ目の錐 (左から3番目: 小さい有限錐 M^l)
    \coordinate (M3_apex) at (5.0,0);
    \fill[gray!10] (M3_apex) -- (4.2,1.6) -- (5.8,1.6) -- cycle;
    \draw[thick] (M3_apex) -- (4.2,1.6) -- (5.8,1.6) -- cycle;
    \node[math_label] at (5.0,1.0) {$M^l$};
    % 頂点 a と点
    \node[circle, fill, inner sep=1.2pt, label=below:{\large $a$}] at (M3_apex) {};

    % M1 と M2 の下部の波括弧と数 (q copies)
    \draw[decorate, decoration={brace, amplitude=10pt, mirror}, thick] 
        (-0.2,-0.4) -- (2.7,-0.4) 
        node[q_copies, midway, below=12pt] {$q$ copies};

    % ================= 右側のグループ: M^l (3番目の錐に近づけて等間隔 2.5 で配置) =================
    % 4つ目の錐 / 1つ目の M^l (8.0 から 7.5 に左シフトして接近)
    \coordinate (Ml1_apex) at (7.5,0);
    \fill[gray!10] (Ml1_apex) -- (6.7,1.6) -- (8.3,1.6) -- cycle;
    \draw[thick] (Ml1_apex) -- (6.7,1.6) -- (8.3,1.6) -- cycle;
    \node[math_label] at (7.5,1.0) {$M^l$};
    
    % 5つ目の錐 / 2つ目の M^l (10.5 から 10.0 に左シフトして接近)
    \coordinate (Ml2_apex) at (10.0,0);
    \fill[gray!10] (Ml2_apex) -- (9.2,1.6) -- (10.8,1.6) -- cycle;
    \draw[thick] (Ml2_apex) -- (9.2,1.6) -- (10.8,1.6) -- cycle;
    \node[math_label] at (10.0,1.0) {$M^l$};

    % M^l の下部の波括弧と数 (接近した座標に合わせて範囲を 7.3 から 10.2 に調整)
    \draw[decorate, decoration={brace, amplitude=10pt, mirror}, thick] 
        (7.3,-0.4) -- (10.2,-0.4) 
        node[q_copies, midway, below=12pt] {$q$ copies};

    % ================= 注釈の追加 =================
    \node[below=1.5cm of M3_apex, font=\small\itshape] {Note: This diagram illustrates pointed model $(N^{\prime}, a)$.};

\end{tikzpicture}
\end{center}

Clearly, $(M, a)\,\underline{\xleftrightarrow{}}\,(N, a)$ and $(N^{\prime}, a)\,\underline{\xleftrightarrow{}}\,(M^{l}[a], a)$. 
\vspace{2mm}
%Next, we show that $(N, a)\,{\equiv}^{q}(N^{\prime}, a)$, that is, for all $\psi(x)\in\mathrm{Fml}_{\mathcal{FO}}$ which has no free number variables except $x$ and $\mathrm{Qr}(\psi)\leq q$, $N\vDash\psi(a)\Leftrightarrow N^{\prime}\vDash\psi(a)$.

Next, we show that $\mathrm{II}$ has a winning strategy in the $q$-round game from $(N;N^{\prime})$, that is, we construct the desired function $f_{\mathrm{II}}:(W\cup W^{\prime})^{< 2q}\rightarrow W\cup W^{\prime}$ recursively. 

Given $\sigma=\langle{\sigma_{0},\dots,\sigma_{2m}}\rangle\in(C\cup C^{\prime})^{<2q}$ such that $E=\{\sigma_{2i}\mid i<m\}$ is isomorphic to $O=\{\sigma_{2i+1}\mid i<m\}$. If $\sigma_{2m}$ is within $2^{q-m}$-distance along $R^{C}\cup R^{C^{\prime}}$ from some $\sigma_{i}$ $(i<2m)$, then we can find an element $\tau\in C\cup C^{\prime}$ such that $E\cup\{\sigma_{2m}\}$ is isomorphic to $O\cup\{\tau\}$. Then we define $f_{\mathrm{II}}(\sigma\restriction(2m+1))\coloneq\tau$. If $\sigma_{2m}$ is at a distance greater than $2^{q-m}$ from all elements $\sigma_{0},\dots,\sigma_{2m-1}$, then we can take an element $\tau$ from $A_{j}$ or $B_{j}$ that has not yet been selected, that is, for all $i<2m$, $\sigma_{i}\not\in A_{j}\cup B_{j}$. Then we define $f_{\mathrm{II}}(\sigma\restriction(2m+1))\coloneq\tau$. Thus, by the above, we define the function $f_{\mathrm{II}}:(W\cup W^{\prime})^{< 2q}\rightarrow W\cup W^{\prime}$. Clearly, $\mathrm{II}$ has a winning strategy in the $q$-round game from $(N;N^{\prime})$.
\vspace{2mm}

By the above and Lemma~\ref{efgame}, $N\vDash\varphi(a)\Leftrightarrow N^{\prime}\vDash\varphi(a)$. Since $\varphi(x)$ is bisimulation invariant, $M\vDash\varphi(a)\Leftrightarrow N\vDash\varphi(a)$ and $N^{\prime}\vDash\varphi(a)\Leftrightarrow M^{l}[a]\vDash\varphi(a)$. Therefore $M\vDash\varphi(a)\Leftrightarrow M^{l}[a]\vDash\varphi(a)$, so $\varphi(x)$ is $l$-local.
\end{proof}

\begin{lem}[$\mathrm{RCA}_0$~\cite{Otto2004b,MR2258711}]\label{resbisiminv}
Let $\varphi(x)\in\mathrm{Fml}_{\mathcal{FO}}$ which has no free number variables except $x$, and $\mathrm{Qr}(\varphi)=q$. Let $l=2^{q}-1$. If $\varphi(x)$ is bisimulation invariant and $\varphi(x)$ is $l$-local, then $\varphi(x)$ is $l$-bisimulation invariant.
\end{lem}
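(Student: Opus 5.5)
We follow Otto's argument: reduce $l$-bisimilarity to full bisimilarity between the $l$-local tree unravellings, and transfer truth of $\varphi$ using $l$-locality and bisimulation invariance. Fix models $M=(W,R,V)$ and $N=(W',R',V')$, worlds $w\in W$, $v\in W'$, and an $l$-bisimulation $Z_0\subseteq\dots\subseteq Z_l$ with $wZ_0v$. By Lemma~\ref{bisimimlloc}'s conclusion (which is assumed here), $M\vDash\varphi(w)\iff M^{l}[w]\vDash\varphi(w)$ and $N\vDash\varphi(v)\iff N^{l}[v]\vDash\varphi(v)$. It therefore suffices to prove $M^{l}[w]\vDash\varphi(w)\iff N^{l}[v]\vDash\varphi(v)$.

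The main step is to build a genuine bisimulation $\hat Z$ between $M^{l}[w]$ and $N^{l}[v]$ relating the roots. The intended definition is
\[
\hat Z=\{(\sigma,\tau)\in W^{l}[w]\times W'^{l}[v]\mid lh(\sigma)=lh(\tau)\land \forall i<lh(\sigma)\,(\sigma_i Z_i \tau_i)\}.
\]
This set exists by $\Delta^0_1$-comprehension: the $Z_i$ are given sets, so the condition is bounded.

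We then verify the three conditions.

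\emph{Atoms.} If $\sigma\hat Z\tau$ with $lh(\sigma)=n\le l$, then $\pi(\sigma)Z_{n-1}\pi(\tau)$. Since $Z_{n-1}\subseteq Z_l$, condition (2) gives agreement on all propositional letters. One should check that $V^{l}[w]$ on atoms $P_i(\sigma)$ is just $V(P_i(\pi(\sigma)))$, because $r^{\le l}(w,\pi(\sigma))$ holds.

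\emph{Forth.} Suppose $\sigma\hat Z\tau$ and $\sigma R^{l}[w]\sigma^\frown u$. Then $lh(\sigma)<l$, so $i=lh(\sigma)-1<l$. The forth condition of the $l$-bisimulation gives $u'$ with $\pi(\tau)R'u'$ and $uZ_{i+1}u'$. Hence $\sigma^\frown u\,\hat Z\,\tau^\frown u'$, and $\tau^\frown u'\in W'^{l}[v]$ since the lengths match. The $u'$ can be found by unbounded search, which is fine since only existence is needed.

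\emph{Back.} This is symmetric.

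Nodes at maximal depth have no successors in either unravelling, so no condition fails there. Since $\langle w\rangle\hat Z\langle v\rangle$ by $wZ_0v$, bisimulation invariance of $\varphi$ yields the desired equivalence.

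The expected obstacle is bookkeeping rather than logic. First, $M^{l}[w]$ must be treated as a Kripke/standard model in $\mathrm{RCA}_0$; its valuation $V^{l}[w]$ is given by relativization $\varphi^{\le l}_w$, so we must check it really is the valuation of the tree structure. For this we argue that $\pi$ restricted to the unravelling is compatible with relativized quantifiers, which is the identification the paper already uses after the definition. Second, there is an indexing mismatch: paths in $W^{<l}$ of length up to $l$ versus depth bound $l$ for the $Z_i$. We handle it by aligning $Z_{lh(\sigma)-1}$ with depth, so that the forth step only ever uses $i<l$. All remaining verifications are $\Sigma^0_0$ in the given sets, so $\mathrm{RCA}_0$ suffices.
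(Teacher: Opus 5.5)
Your overall route matches the paper's: pass to the local unravellings by $l$-locality, upgrade the $l$-bisimulation to a full bisimulation between them, apply bisimulation invariance, and return by $l$-locality. Your path relation $\hat Z$ is a correct explicit witness for the paper's one-line claim that $l$-bisimilar local unravellings are bisimilar.

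The gap is in the step you file under bookkeeping. You claim that $V^{l}[w]$ ``really is the valuation of the tree structure'' because $\pi$ is compatible with the relativized quantifiers. This is false unless $M$ is already a tree around $w$. By definition, $V^{l}[w]$ evaluates $r(\sigma,\tau)$ as $\pi(\sigma)R\pi(\tau)$, and its quantifiers range over the forward $l$-ball of $w$ in $M$. So $M^{l}[w]$, read as a standard model, is the induced substructure on that ball pulled back along $\pi$, not the path tree $(W^{l}[w],R^{l}[w])$. Your $\hat Z$ respects $R^{l}[w]$, not the relation the model actually carries, so bisimulation invariance cannot be applied to it.

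No local check repairs this. Let $M$ be a single reflexive point $w$ and $N$ a long irreflexive chain starting at $v$, with all atoms false. Then $(M,w)$ and $(N,v)$ are $l$-bisimilar. Yet in the first ball structure $r$ is total, so every node has an $r$-successor, while in the second every $r$-path has length at most $l$. Hence $\Box^{l+1}\bot$ separates them, and no bisimulation between them exists.

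What is missing is the first sentence of the paper's proof: one may assume $M$ and $N$ are (locally) tree models ``by considering the tree unravelling''. Concretely:
\begin{itemize}
\item $(M,w)$ is bisimilar to its tree unravelling via the relation $\{(\pi(\sigma),\sigma)\}$, so bisimulation invariance of $\varphi$ lets you replace $M$ and $N$ by their unravellings.
\item The $l$-bisimulation lifts to the unravellings by the same path construction you used for $\hat Z$: take $\hat Z_i$ to be the pairs of equal-length paths of length at most $i+1$ whose $j$-th entries are $Z_j$-related.
\item In a tree, $\pi$ is injective and $\pi(\sigma)R\pi(\tau)$ holds exactly when $\tau$ is a one-step extension of $\sigma$. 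So, modulo the length indexing you already noticed, the relativized valuation is the valuation of the truncated tree.
\end{itemize}
After this reduction your $\hat Z$ argument goes through as written. Bisimulation invariance is thus needed for the reduction to trees as well, not only between the two local unravellings.
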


\begin{proof}
Suppose $(M, a)\,\underline{\xleftrightarrow{}}^{l}\,(N, b)$ and $M\vDash\varphi(a)$. We show that $N\vDash\varphi(b)$. We may assume that $M$ and $N$ are $l$-locally tree models by considering the tree unravelling. By $l$-locality, $M\vDash\varphi(a)\Leftrightarrow M^{l}[a]\vDash\varphi(a)$. Since $(M^{l}[a], a)\,\underline{\xleftrightarrow{}}^{l}\,({N}^{l}[b], b)$ holds, $(M^{l}[a], a)\,\underline{\xleftrightarrow{}}\,({N}^{l}[b], b)$ also holds. Therefore ${N}^{l}[b]\vDash\varphi(b)$. By $l$-locality, $N\vDash\varphi(b)$. Therefore, $\varphi(x)$ is $l$-bisimulation invariant.
\end{proof}

\begin{lem}[$\mathrm{RCA}_0$]\label{krlbisim1}
Let $\varphi(x)\in\mathrm{Fml}_{\mathcal{FO}}$ which has no free number variables except $x$, contains only $P_{0}(y), \dots, P_{k-1}(y)$, and $r(y, y^{\prime})$ as first-order predicates, and $\mathrm{Qr}(\varphi)=q$. Let $l=2^{q}-1$. If $\varphi(x)$ is $l$-bisimulation invariant, then $\varphi(x)$ is $k$-restricted $l$-bisimulation invariant.
\end{lem}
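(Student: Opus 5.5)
Given a $k$-restricted $l$-bisimulation $Z_0\subseteq\dots\subseteq Z_l$ between $(M,a)$ and $(N,b)$ with $M\vDash\varphi(a)$, the plan is to modify the valuations of $M$ and $N$ on the propositional letters that do not occur in $\varphi$. This produces models $\widehat{M}$ and $\widehat{N}$ that are genuinely $l$-bisimilar. We then apply $l$-bisimulation invariance to $\widehat{M}$ and $\widehat{N}$, and finally transfer back using the fact that $\varphi$ cannot see the modified letters.

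\emph{Construction.} For $\widehat{M}=(W,R,\widehat{V})$, set $\widehat{V}(w,p_i)=V(w,p_i)$ for $i<k$ and $\widehat{V}(w,p_i)=0$ for $i\geq k$. Define $\widehat{N}$ from $N$ in the same way. In $\mathrm{RCA}_0$ this is unproblematic at the level of standard models: the new evaluation of an $\mathcal{L}_{\mathcal{FO}}$-sentence with parameters is the old evaluation of the sentence obtained by replacing every atom $P_i(t)$ with $i\geq k$ by $\bot$. That substitution is primitive recursive, so $\widehat{V}$ exists by $\Delta^0_1$-comprehension. The defining clauses of a standard model are then checked directly from those of $V$, since the substitution commutes with the connectives and quantifiers.

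\emph{Steps.} (i) The same chain $Z_0\subseteq\dots\subseteq Z_l$ is an $l$-bisimulation between $(\widehat{M},a)$ and $(\widehat{N},b)$. Conditions (1), (3) and (4) involve only $R$, $R'$ and the $Z_i$, so they are unchanged. For condition (2): on $p_0,\dots,p_{k-1}$ it follows from $(2^{\prime})$, reading $(2^{\prime})$ with $v,v'$ in place of the evident typo $w,w'$. On $p_i$ with $i\geq k$ both sides are $0$. (ii) By $l$-bisimulation invariance, $\widehat{M}\vDash\varphi(a)\leftrightarrow\widehat{N}\vDash\varphi(b)$. (iii) Since $\varphi$ contains only $P_0,\dots,P_{k-1}$ and $r$, the substitution leaves $\varphi$ fixed. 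Hence $\widehat{V}(\varphi(\overline{a}))=V(\varphi(\overline{a}))$ by definition, and likewise for $N$. Combining (ii) and (iii) gives $M\vDash\varphi(a)\leftrightarrow N\vDash\varphi(b)$.

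\emph{Main obstacle.} The main obstacle is making the construction of $\widehat{V}$ rigorous in $\mathrm{RCA}_0$. One must verify the clause for $\forall$ and the congruence clause for relation symbols, which requires the substitution to commute with instantiating constants $\overline{m}$; this is a routine induction using $\mathrm{I}\Sigma^0_1$. If one prefers to avoid defining new models, an alternative is available. One can use Corollary~\ref{k-efgame} in place of (iii), since the identity strategy on $M$ versus $\widehat{M}$ is trivially winning for the $k$-restricted game. However, the direct substitution argument is cleaner and is the one I would write.
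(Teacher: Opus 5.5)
Your proposal is correct and follows essentially the same route as the paper: the paper also redefines the valuation through a syntactic substitution on the letters $P_i$ with $i\geq k$ (sending them to $\top$ rather than $\bot$, which makes no difference), observes that the given $k$-restricted $l$-bisimulation is then an $l$-bisimulation between the modified models, and applies $l$-bisimulation invariance. The only difference is the transfer back to the original models. The paper invokes Corollary~\ref{k-efgame} via the identity strategy, whereas you note directly that the substitution fixes $\varphi$, so the old and new valuations agree on $\varphi(\overline{a})$ by definition. Your version is more direct and also avoids the paper's unnecessary tree-unravelling step.
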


\begin{proof}
Suppose $(M, a)\,\underline{\xleftrightarrow{}}^{k, l}_{*}\,(N, b)$ and $M\vDash\varphi(a)$. We may assume that $(M, a)$ and $(N, b)$ are tree models with roots $a$ and $b$, respectively, by considering the tree unravelling. Let $M=(W, R, V)$. Then we define the new valuation $V^{\star}$ and define $M^{\star}=(W, R, V^{\star})$ such that $M^{\star}\vDash\varphi(a)$.

Let $\mathcal{P}=\{P_{0}(y), \dots, P_{k-1}(y)\}$. First, we define the function $(\cdot)_{\top}:\mathrm{Fml}_{\mathcal{FO}}\rightarrow\mathrm{Fml}_{\mathcal{FO}}$ by the following:
\begin{itemize}
    \item $(P_{i}(x))_{\top}=P_{i}(x)$ if $P_{i}(x)\in\mathcal{P}$,
    \item $(P_{i}(x))_{\top}=\top$ if $P_{i}(x)\not\in\mathcal{P}$,
    \item $(\bot)_{\top}=\bot$,
    \item $(\psi\rightarrow\theta)_{\top}=(\psi)_{\top}\rightarrow(\theta)_{\top}$,
    \item $(\forall x\psi)_{\top}=\forall x(\psi)_{\top}$.
\end{itemize}
We define such $(\cdot)_{\top}$ using $\mathrm{I}\mathrm{\Sigma}^0_1$. Then we define $V^{\star}:T_{M}\cup S_{M}\rightarrow M\cup2$ by the following: for all $w\in W$ and $\psi\in\mathrm{Fml}_{\mathcal{FO}}$, $V^{\star}(\psi(w))=V((\psi(w))_{\top})$. Then, for all $w\in W$ and $\psi\in\mathrm{Fml}$, 
\begin{equation*}
\begin{split}
V^{\star}(\forall x\psi(x, w))=1&\iff V((\forall x\psi(x, w))_{\top})=1\\&\iff V(\forall x(\psi(x, w))_{\top})=1\\&\iff \forall u\in W(V((\psi(u, w))_{\top})=1)\\&\iff \forall u\in W(V^{\star}(\psi(u, w))=1).
\end{split}
\end{equation*}

Considering the identity map from $(M, a)$ to $(M^{\star}, a)$, $\mathrm{II}$ has a winning strategy in the $q$-round $k$-restricted Ehrenfeucht-Fra\"{i}ss\'{e} game from $(M;M^{\star})$. By Lemma~\ref{k-efgame}, $M^{\star}\vDash\varphi(a)$. Similarly, for $(N, b)$, we define $(N^{\star}, b)$. By $(M, a)\,\underline{\xleftrightarrow{}}^{k, l}_{*}\,(N, b)$, $(M^{\star}, a)\,\underline{\xleftrightarrow{}}^{k, l}_{*}\,(N^{\star}, b)$, and by the construction, $(M^{\star}, a)\,\underline{\xleftrightarrow{}}^{l}\,(N^{\star}, b)$. Since $\varphi(x)$ is $l$-bisimulation invariant, $N^{\star}\vDash\varphi(b)$. Considering the identity map from $(N, b)$ to $(N^{\star}, b)$, $\mathrm{II}$ has a winning strategy in the $q$-round $k$-restricted Ehrenfeucht-Fra\"{i}ss\'{e} game from $(N;N^{\star})$. By Lemma~\ref{k-efgame}, $N\vDash\varphi(b)$. 
\end{proof}

\begin{thm}[The van Benthem characterization theorem (the semantic form), $\mathrm{RCA}_0$]\label{vanB}
Let $\varphi(x)\in\mathrm{Fml}_{\mathcal{FO}}$ which has no free number variables except $x$. Then the following are equivalent:
\begin{enumerate}
    \item $\varphi(x)$ is bisimulation invariant,
    \item there exists $\psi\in\mathrm{Fml}$ such that $\vDash\forall x(\varphi(x)\leftrightarrow\mathrm{ST}_{x}(\psi))$.
\end{enumerate}
\end{thm}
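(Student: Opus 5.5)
The direction $(2)\Rightarrow(1)$ is the Corollary following Proposition~\ref{2im1vB1}. For $(1)\Rightarrow(2)$, let $q=\mathrm{Qr}(\varphi)$, $l=2^{q}-1$, and let $k$ be large enough that $\varphi$ mentions only $P_0,\dots,P_{k-1}$ and $r$. We would chain Lemma~\ref{bisimimlloc}, Lemma~\ref{resbisiminv} and Lemma~\ref{krlbisim1} to conclude that $\varphi(x)$ is $k$-restricted $l$-bisimulation invariant. It then remains to show that such a $\varphi$ is equivalent to a modal formula of modal depth $\le l$ in $p_0,\dots,p_{k-1}$.

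For this we would use characteristic formulas. By primitive recursion on $m\le l$ we define the finite set $\mathcal{C}_m$ of $k$-restricted depth-$m$ characteristic formulas. The set $\mathcal{C}_0$ consists of all conjunctions of literals over $p_0,\dots,p_{k-1}$. The set $\mathcal{C}_{m+1}$ consists of all formulas
\[
\chi_0\land\bigwedge_{\theta\in S}\Diamond\theta\land\Box\bigvee_{\theta\in S}\theta
\qquad(\chi_0\in\mathcal{C}_0,\ S\subseteq\mathcal{C}_m).
\]
These are finitely many explicit modal formulas, so the family exists in $\mathrm{RCA}_0$ as a finite coded object.

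Two facts are needed, both proved by $\Sigma^0_1$ (in fact bounded) induction on $m$. First, every pointed model $(M,w)$ satisfies exactly one $\chi\in\mathcal{C}_m$, namely the one determined by its restricted atoms and the set of characteristic formulas realized by its successors. This uses $\Sigma^0_1$-bounded comprehension to form the finite set of $\theta\in\mathcal{C}_m$ realized at some successor. Second, if $(M,w)$ and $(N,v)$ satisfy the same $\chi\in\mathcal{C}_m$, then they are $k$-restricted $m$-bisimilar. Here we set $Z_i$ to be the set of pairs $(u,u')$ at which the same $\mathcal{C}_{m-i}$-formula holds; this is $\Delta^0_1$ since $V$ is given, so $Z_i$ exists. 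The forth and back conditions follow from the $\Diamond$ and $\Box$ conjuncts.

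We then take
\[
\psi\equiv\bigvee\{\chi\in\mathcal{C}_l\mid \text{some model } (M,w)\models\chi\land\varphi(w)\},
\]
and this is the main obstacle, because the index set given this way is $\Sigma^1_1$-defined and may not exist in $\mathrm{RCA}_0$. To avoid this we would instead characterize it concretely. Each $\chi\in\mathcal{C}_l$ has a canonical finite tree model $T_\chi$, built directly from its syntax, and $T_\chi$ satisfies $\chi$. On a finite model the first-order truth of $\varphi$ is decidable, so we set $\psi=\bigvee\{\chi\mid T_\chi\vDash\varphi(\mathrm{root})\}$, a finite disjunction that is computable from $\varphi$. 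To verify $\vDash\forall x(\varphi(x)\leftrightarrow\mathrm{ST}_x(\psi))$, take any standard model $M$ and $w$, and let $\chi$ be the unique formula of $\mathcal{C}_l$ true at $w$. By the second fact, $(M,w)$ is $k$-restricted $l$-bisimilar to $(T_\chi,\mathrm{root})$, so $k$-restricted $l$-bisimulation invariance gives $M\vDash\varphi(w)\iff T_\chi\vDash\varphi(\mathrm{root})\iff M,w\Vdash\psi$. Lemma~\ref{frightk} then transfers this to $\mathrm{ST}_x$.

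The delicate parts are making sure every construction stays effective. In particular, the valuations on the finite $T_\chi$ and on the disjoint unions used in the earlier lemmas must exist without $\mathrm{ACA}_0$. For finite models this is unproblematic, because truth there can be computed by bounded search.
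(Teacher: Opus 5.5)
Your proposal is correct and follows essentially the paper's route. You chain Lemmas~\ref{bisimimlloc}, \ref{resbisiminv} and \ref{krlbisim1} to get $k$-restricted $l$-bisimulation invariance, then take $\psi$ to be the disjunction of those depth-$l$ characteristic formulas whose canonical finite tree satisfies $\varphi$ at the root; your $\mathcal{C}_m$ and $T_\chi$ are the paper's $\chi_{aS}$ and $aS\in K^{P}_{m+1}$ indexed from the syntactic side, and your direct existence-and-uniqueness induction on arbitrary pointed models replaces the paper's descending-path contradiction on height-$l$ trees. Two small points to tighten: take $\mathcal{C}_0$ to be exactly the $2^k$ complete conjunctions of literals, and note that your $Z_i$ form the increasing chain required by the paper's definition of $l$-bisimulation, since the $\mathcal{C}_{j+1}$-formula true at a point determines its $\mathcal{C}_j$-formula.
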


\begin{proof}
By Proposition~\ref{2im1vB1}, it is enough to show that for every $\varphi(x)\in\mathrm{Fml}_{\mathcal{FO}}$ which has no free number variables except $x$, if $\varphi(x)$ is bisimulation invariant, then there exists $\psi\in\mathrm{Fml}$ such that $\mathrm{Pbl}_{\mathcal{FO}}\bigl(\emptyset, \forall x(\varphi(x)\leftrightarrow\mathrm{ST}_{x}(\psi))\bigr)$ holds.

Let $\varphi(x)\in\mathrm{Fml}_{\mathcal{FO}}$ which has no free number variables except $x$ and $\mathrm{Qr}(\varphi)=q$. Let $l=2^{q}-1$. We assume that $\varphi(x)$ contains only $P_{0}(y), \dots, P_{k-1}(y)$, and $r(y, y^{\prime})$ as first-order predicates. 
\vspace{2mm}

We make the desired modal formula $\psi$. Let $P=\{p_{0}, \dots, p_{k-1}\}$ and let $\mathrm{Fml}[P]$ be the set of all modal formulas containing only $p_{0},\dots,p_{k-1}$ as the propositions.
\vspace{1mm}

Let $K^{P}$ be the set of all finite tree Kripke models with a root. We denote $K^{P}=\{(K_{i}, \rightarrow_{i}, v_{i}, a_{i})\}_{i}$, where for each $i$, $K_{i}$ is the set of worlds, $\rightarrow_{i}$ is the accessibility relation, $v_{i}$ is the valuation from $K_{i}\times P$ to $2$, and $a_{i}$ is the root of the model $(K_{i}, \rightarrow_{i}, V_{i})$, where $V_{i}$ is the valuation from $K_{i}\times\mathrm{Fml}[P]$ to $2$ extending $v_{i}$. Let $K^{P}_{1}=\{(K, \rightarrow, v, a)\in K^{P}\mid |K|=1\}$. Then we may consider that $K^{P}_{1}$ is finite since, for a point $a$, the number of the valuations from $\{a\}\times P$ to $2$ is at most $2^{k}$. 
\vspace{1mm}

Fix $S=\{(K_{i}, \rightarrow_{i}, v_{i}, a_{i})\}_{i\in I}\subseteq_{fin} K^{P}$ (that is, $I$ is finite) and $a=(\{a\}, \emptyset, v_{a}, a)\in K^{P}_{1}$. Then we define the finite tree model with the root $a$, $aS=(K, \rightarrow_{K}, v_{K}, a)$, by the following:
\begin{equation*}
K=\{a\}\sqcup\bigsqcup_{i\in I}K_{i},
\end{equation*}
\begin{equation*}
\rightarrow_{K}=\{(a, a_{i})\mid i\in I\}\sqcup\bigsqcup_{i\in I}\rightarrow_{i},
\end{equation*}
\begin{equation*}
v_{K}=v_{a}\cup\bigcup_{i\in I}v_{i}.
\end{equation*}

For each $i\leq l$, we define $K^{P}_{i+1}$ as follows: for defined $K^{P}_{i}$, $K^{P}_{i+1}=\{aS\mid a\in K^{P}_{1}\land S\subseteq_{fin}K^{P}_{i}\}$. For each $i\leq l$, $|K^{P}_{i+1}|\leq 2^{|K^{P}_{i}|}\times|K^{P}_{1}|$.
\vspace{1mm}

Next, $a\in K^{P}_{1}$, and $S\subseteq_{fin}K^{P}$, we define $\chi_{aS}$ as follows: for each $a\in K^{P}_{1}$, 
\begin{equation*}
\chi_{a}\equiv\bigwedge\{p\mid p\in P\land v(a, p)=1\}\land\bigwedge\{\neg p\mid p\in P\land v(a, p)=0\}.
\end{equation*}

If $\chi_{K}$ is defined for each $K\in S\subseteq_{fin}K^{P}$, then
\begin{equation*}
\chi_{aS}\equiv\chi_{a}\land(\bigwedge_{K\in S}\Diamond\chi_{K})\land\Box(\bigvee_{K\in S}\chi_{K}).
\end{equation*} 

Then let $\psi\equiv\bigvee\{\chi_{aS}\mid aS\in K^{P}_{l+1}, aS\vDash\varphi(a)\}$. We show that $\psi$ is the desired formula. First, we show the following lemma.

\begin{lem}[$\mathrm{RCA}_0$]\label{mmm}
Let $M=(W, R, V)$ be a tree model with the root $w\in W$, whose height is $l$. Let $aS=(K, \rightarrow_{K}, v_{K}, a)\in K^{P}_{l+1}$ such that the height of $aS$ is $l$. Then the following are equivalent:
\begin{enumerate}
    \item $(aS, a)\,\underline{\xleftrightarrow{}}^{k, l}_{*}\,(M, w)$,
    \item $M, w\Vdash\chi_{aS}$.
\end{enumerate} 
\end{lem}
\begin{lemproof}
$(1)\Rightarrow(2)$: For any model $aS\in K^{P}$, let $ht(aS)$ be the height of $aS$. Let $Z_{0}\subseteq\dots\subseteq Z_{l}\subseteq K\times W$ be a $k$-restricted $l$-bisimulation over $(aS, a)$ and $(M, w)$. By the construction of $K^{P}_{l}$, for each $b\in K$, there exists $S_{b}\subseteq_{fin}K^{P}$ such that $bS_{b}=(K_{b}, \rightarrow_{K_{b}}, v_{K_{b}}, b)$ is a submodel of $aS$. Then we show the following statement by induction on $m\leq l$:
\begin{equation*}
\forall b\in K\forall s\in W(bZ_{l-m} s\land(ht(bS_{b})\leq m)\rightarrow M, s\Vdash\chi_{bS_{b}}).
\end{equation*}

We may show it by $\mathrm{I}\Sigma^{0}_{1}$. 
\begin{itemize}
    \item \textit{Base case}: Fix $b\in K$ and $s\in W$ with $bZ_{l}s\land ht(bS_{b})=0$. Then $bS_{b}=b=(\{b\}, \emptyset, v_{b}, b)$. Then, for all $p\in P$, $v_{b}(b, p)=1\leftrightarrow V(s, p)=1$. Thus $M, s\Vdash\chi_{bS_{b}}$.
    \item \textit{Induction step}: We assume that $\forall b\in K\forall s\in W(bZ_{l-m} s\land(ht(bS_{b})\leq m)\rightarrow M, s\Vdash\chi_{bS_{b}})$. Fix $c\in K$ and $u\in W$ with $cZ_{l-(m+1)}u\land ht(cS_{c})\leq m+1$. We may assume that $ht(cS_{c})=m+1$. We show that $M, u\Vdash\chi_{cS_{c}}$. Clearly, $M, u\Vdash\chi_{c}$. Let $cS_{c}=(K_{c}, \rightarrow_{K_{c}}, v_{K_{c}}, c)$. 
\begin{itemize}
    \item Take $K^{\prime}\in S_{c}$. Then there exists $c^{\prime}\in c\rightarrow_{K_{c}}$ and $S_{c^{\prime}}\subseteq_{fin}K^{P}$ such that $K^{\prime}=c^{\prime}S_{c^{\prime}}$. Note that $c^{\prime}S_{c^{\prime}}$ is the submodel of $aS$. By $k$-restricted $l$-bisimulation, there exists $u^{\prime}\in uR$ such that $c^{\prime}Z_{l-m}u^{\prime}$. Then, by the assumption, $M, u^{\prime}\Vdash\chi_{c^{\prime}S_{c^{\prime}}}$, so $M, u^{\prime}\Vdash\chi_{K^{\prime}}$. Thus $M, u\Vdash\Diamond\chi_{K^{\prime}}$. Therefore, $M, u\Vdash\bigwedge_{K^{\prime}\in S_{c}}\Diamond\chi_{K^{\prime}}$. 
    \item Take $u^{\prime}\in uR$. By $k$-restricted $l$-bisimulation, there exists $c^{\prime}\in c\rightarrow_{K_{c}}$ such that $c^{\prime}Z_{l-m}u^{\prime}$. Then there exists $S_{c^{\prime}}\subseteq_{fin}K^{P}$ such that $c^{\prime}S_{c^{\prime}}\in S_{c}$. By the assumption, $M, u^{\prime}\Vdash\chi_{c^{\prime}S_{c^{\prime}}}$. Thus $M, u^{\prime}\Vdash\bigvee_{K\in S_{c}}\chi_{K}$. Therefore, $M, u\Vdash\Box(\bigvee_{K\in S_{c}}\chi_{K})$. 
\end{itemize}
By the above, $M, u\Vdash\chi_{cS_{c}}$.
\end{itemize}
By induction, for all $m\leq l$, $\forall b\in K\forall s\in W(bZ_{l-m} s\land(ht(bS_{b})\leq m)\rightarrow M, s\Vdash\chi_{bS_{b}})$. In particular, $M, w\Vdash\chi_{aS}$ (Note that $aS=aS_{a}$).
\vspace{3mm}

$(2)\Rightarrow(1)$: For each $m\leq l$, $Z_{m}=\{(b, u)\in K\times W\mid M, u\Vdash\chi_{bS_{b}}\land ht(bS_{b})\geq l-m\}$. We show that $Z_{0}\subseteq\dots\subseteq Z_{l}$ is a $k$-restricted $l$-bisimulation over $(aS, a)$ and $(M, w)$. 
\begin{itemize}
    \item \textit{Base condition $1$}: Note that $aS=aS_{a}$. Since ($2$) holds, $aZ_{0}w$. 
    \item \textit{Base condition $2$}: For all $p\in P$ and $(b, s)\in Z_{l}$, $v_{K_{b}}(b, p)=1\leftrightarrow V(s, p)=1$ since $M, s\Vdash\chi_{b}$ holds. 
    \item \textit{Forth condition}: Fix $i<l$ and $(b, u)\in Z_{i}$ with $bS_{b}=(K_{b}, \rightarrow_{K_{b}}, v_{K_{b}}, b)$ and $ht(bS_{b})\geq l-i>0$, and fix $c\in b\rightarrow_{K_{b}}$. Then there exists $S_{c}\subseteq_{fin}K^{P}$ such that $cS_{c}\in S_{b}$. Since $M, u\Vdash\chi_{bS_{b}}$, $M, u\Vdash\bigwedge_{K\in S_{b}}\Diamond\chi_{K}$. Therefore, $M, u\Vdash\Diamond\chi_{cS_{c}}$, so there exists $u^{\prime}\in uR$ such that $M, u^{\prime}\Vdash\chi_{cS_{c}}$. Thus $cZ_{i+1}u^{\prime}$. 
    \item \textit{Back condition}: Fix $i<l$ and $(b, u)\in Z_{i}$ with $bS_{b}=(K_{b}, \rightarrow_{K_{b}}, v_{K_{b}}, b)$ and $ht(bS_{b})\geq l-i>0$, and fix $u^{\prime}\in uR$. Since $M, u\Vdash\chi_{bS_{b}}$, $M, u\Vdash\Box\bigvee_{K\in S_{b}}\chi_{K}$, so $M, u^{\prime}\Vdash\bigvee_{K\in S_{b}}\chi_{K}$. Thus there exists $K\in S_{b}$ such that $M, u^{\prime}\Vdash\chi_{K}$. Then there exists $c\in b\rightarrow_{K_{b}}$ and $S_{c}\subseteq_{fin}K^{P}$ such that $K=cS_{c}$. Since $M, u^{\prime}\Vdash\chi_{cS_{c}}$, $cZ_{i+1}u^{\prime}$. 
\end{itemize}
By the above, $Z_{0}\subseteq\dots\subseteq Z_{l}$ is an $l$-bisimulation over $(aS, a)$ and $(M, w)$.
\end{lemproof}

Last, we show that $\vDash\forall x(\varphi(x)\leftrightarrow\mathrm{ST}_{x}(\psi))$ holds. Fix a model $M=(W, R, V)$ and $w\in W$. We may assume that $M$ is a tree model with root $w$, whose height is $l$ (if the height is less than $l$, we can extend it). For each $u\in W$, $ht_{w}(u)$ is the length of the path from $w$. We show that $M\vDash\varphi(w)$ if and only if $M, w\Vdash\psi$. 
\vspace{2mm}

First, we assume that $M, w\Vdash\psi$. Then there exists $aS\in K^{P}_{l+1}$ such that $aS\vDash\varphi(a)$ and $M, w\Vdash\chi_{aS}$. By Lemma~\ref{mmm}, $(aS, a)\,\underline{\xleftrightarrow{}}^{k, l}_{*}\,(M, w)$. By Lemma~\ref{bisimimlloc} and Lemma~\ref{resbisiminv}, $\varphi(x)$ is $k$-restricted $l$-bisimulation invariant. Thus, $M\vDash\varphi(w)$.
\vspace{2mm}

Next, we assume that $M\vDash\varphi(w)$. To derive a contradiction, we assume that for all $aS\in K^{P}_{l+1}$, $M, w\not\Vdash\chi_{aS}$. We show the following lemma:
\begin{lem}[$\mathrm{RCA}_0$]
For all $m\leq l$, there exists $u\in W$ with $ht_{w}(u)=m$ such that for all $aS\in K^{P}_{l+1-m}$, $M, u\not\Vdash\chi_{aS}$. 
\end{lem}
\begin{lemproof}
We prove it by induction on $m\leq l$. 
\vspace{2mm}

\textit{Base case}: For $m=0$, by the assumption, for all $aS\in K^{P}_{l+1}$, $M, w\not\Vdash\chi_{aS}$. 
\vspace{2mm}

\textit{Induction step}: We assume that there exists $u\in W$ with $ht_{w}(u)=m$ such that for all $aS\in K^{P}_{l+1-m}$, $M, u\not\Vdash\chi_{aS}$. If there exists no $u^{\prime}\in uR$, then there exists $a\in K^{P}_{l+1-m}$ ($a\in K^{P}_{1}$) such that $M, u\Vdash\chi_{a}$. It is a contradiction. Thus, $uR$ is not empty. To deduce the contradiction, we assume that for all $u^{\prime}\in uR$, there exists $aS\in K^{P}_{l-m}$ such that $M, u^{\prime}\Vdash\chi_{aS}$. Then $S^{\prime}=\{aS\in K^{P}_{l-m}\mid M, u\Vdash\Diamond\chi_{aS}\}$ is not empty. Now, we consider $u$ as the element of $K^{P}_{1}$, that is, $u=(\{u\}, \emptyset, v_{u}, u)\in K^{P}_{1}$. Then $uS^{\prime}\in K^{P}_{l+1-m}$ and $M, u\Vdash\chi_{uS^{\prime}}$. It is a contradiction. Thus there exists $u^{\prime}\in uR$ such that for all $aS\in K^{P}_{l-m}$, $M, u^{\prime}\not\Vdash\chi_{aS}$. By induction, for all $m\leq l$, there exists $u\in W$ with $ht_{w}(u)=m$ such that for all $aS\in K^{P}_{l+1-m}$, $M, u\not\Vdash\chi_{aS}$. 
\end{lemproof}

In particular, there exists $u\in W$ with $ht_{w}(u)=l$ such that for all $a\in K^{P}_{1}$, $M, u\not\Vdash\chi_{a}$. It is a contradiction. Thus there exists $aS\in K^{P}_{l+1}$ such that $M, w\Vdash\chi_{aS}$. By Lemma~\ref{mmm}, $(aS, a)\,\underline{\xleftrightarrow{}}^{k, l}_{*}\,(M, w)$. Since $\varphi(x)$ is $k$-restricted $l$-bisimulation invariant, $aS\vDash\varphi(a)$. Therefore, $M, w\Vdash\psi$.
%By G\"odel's Completeness Theorem, that is, by $\mathrm{WKL}_{0}$, $\mathrm{Pbl}_{\mathcal{FO}}\bigl(\emptyset, \forall x(\varphi(x)\leftrightarrow\mathrm{ST}_{x}(\psi))\bigr)$ holds.
\end{proof}

%\begin{comment}
According to the above, we obtain the following result.

\begin{cor}[The van Benthem characterization theorem (the hybrid form), $\mathrm{WKL}_0$]\label{vanB}
Let $\varphi(x)\in\mathrm{Fml}_{\mathcal{FO}}$ which has no free number variables except $x$. Then the following are equivalent:
\begin{enumerate}
    \item $\varphi(x)$ is bisimulation invariant,
    \item there exists $\psi\in\mathrm{Fml}$ such that $\mathrm{Pbl}_{\mathcal{FO}}\bigl(\emptyset, \forall x(\varphi(x)\leftrightarrow\mathrm{ST}_{x}(\psi))\bigr)$ holds.
\end{enumerate}
\end{cor}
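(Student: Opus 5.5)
The direction $(2)\Rightarrow(1)$ is exactly Proposition~\ref{2im1vB1}, which is already available in $\mathrm{RCA}_0$. So the whole task is $(1)\Rightarrow(2)$ in $\mathrm{WKL}_0$. Assume $\varphi(x)$ is bisimulation invariant, and let $\mathrm{Qr}(\varphi)=q$ and $l=2^q-1$. Assume $\varphi$ mentions only $P_0,\dots,P_{k-1}$ and $r$. Theorem~\ref{vanB} (semantic form), proved in $\mathrm{RCA}_0$, then yields the explicit modal formula
\[
\psi\equiv\bigvee\{\chi_{aS}\mid aS\in K^{P}_{l+1},\ aS\vDash\varphi(a)\}
\]
with $\vDash\forall x(\varphi(x)\leftrightarrow\mathrm{ST}_x(\psi))$. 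Here $\psi$ exists as a finite object: $K^P_{l+1}$ is a finite set of finite models, and for each of them the truth of $\varphi(a)$ can be decided by a bounded computation over a finite structure.

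It remains to turn semantic validity into provability. Let $\theta\equiv\forall x(\varphi(x)\leftrightarrow\mathrm{ST}_x(\psi))$, which is a single sentence. I would apply the Weak Completeness Theorem for $\mathcal{FO}$ (provable in $\mathrm{WKL}_0$) with $T=\emptyset$. The empty set is consistent in $\mathrm{WKL}_0$, for instance by soundness applied to a one-point standard model, which is a finite object that $\mathrm{RCA}_0$ can build together with its full valuation. Since $\vDash\theta$, completeness gives $\mathrm{Pbl}_{\mathcal{FO}}(\emptyset,\theta)$, which is (2).

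The main care point is matching notions of model. The semantic form quantifies over Kripke models $(W,R,V)$ with full valuations, while completeness speaks of standard models $(M_\star,V)$ on $\mathcal{L}_{\mathcal{FO}}$. By Lemma~\ref{frightk} and the identification made after it, every standard model gives a Kripke model satisfying the same standard translations. The proof of Theorem~\ref{vanB} in fact establishes $M\vDash\varphi(w)\leftrightarrow M,w\Vdash\psi$ for arbitrary standard models: it fixes an arbitrary model and world and passes to unravellings. So validity in the sense required by the completeness theorem holds.

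If one wants to avoid the general completeness theorem, there is an alternative route. The contrapositive of completeness, that a consistent sentence $\neg\theta$ has a model, needs only the WKL-based model construction for a single sentence. Either way, $\mathrm{WKL}_0$ suffices, and this step is the only point where more than $\mathrm{RCA}_0$ is used.
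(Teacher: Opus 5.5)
Your proposal is correct and follows the paper's route. $(2)\Rightarrow(1)$ is Proposition~\ref{2im1vB1}, and $(1)\Rightarrow(2)$ takes the formula $\psi$ supplied by the semantic form (proved in $\mathrm{RCA}_0$), then converts $\vDash\forall x(\varphi(x)\leftrightarrow\mathrm{ST}_{x}(\psi))$ into $\mathrm{Pbl}_{\mathcal{FO}}\bigl(\emptyset,\forall x(\varphi(x)\leftrightarrow\mathrm{ST}_{x}(\psi))\bigr)$ via completeness, which is the only use of $\mathrm{WKL}_0$; your remarks on the consistency of $\emptyset$ and on identifying Kripke models with standard models spell out what the paper leaves implicit.
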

%\end{comment}

%Note that we only use $\mathrm{WKL}_0$ to show that $\mathrm{Pbl}_{\mathcal{FO}}\bigl(\emptyset, \forall x(\varphi(x)\Leftrightarrow\mathrm{ST}_{x}(\psi))\bigr)\leftrightarrow\,\vDash\forall x(\varphi(x)\leftrightarrow\mathrm{ST}_{x}(\psi))$.
The following theorem is known as Rosen's theorem~\cite{MR1468908}.

\begin{cor}[Rosen's theorem, $\mathrm{RCA}_0$~\cite{MR1468908}]
Let $\varphi(x)\in\mathrm{Fml}_{\mathcal{FO}}$ which has no free number variables except $x$. Then the following are equivalent:
\begin{enumerate}
    \item $\varphi(x)$ is bisimulation invariant for finite models,
    \item there exists $\psi\in\mathrm{Fml}$ such that for every finite model $M$ on $\mathcal{L}_{\mathcal{FO}}$, $M\vDash\forall x(\varphi(x)\leftrightarrow\mathrm{ST}_{x}(\psi))$ holds.
\end{enumerate}
\end{cor}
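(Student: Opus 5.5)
The plan is to rerun the proof of Theorem~\ref{vanB} (semantic form) with every model restricted to be finite, and to check that each step stays inside the class of finite models.

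The direction $(2)\Rightarrow(1)$ is the argument of Proposition~\ref{2im1vB1} restricted to finite $M,N$. There the hypothesis supplies $\psi$ with $M\vDash\forall x(\varphi(x)\leftrightarrow\mathrm{ST}_x(\psi))$ for finite $M$, so soundness is not even needed. One then runs the same $\mathrm{I}\Sigma^0_1$ induction on $\psi$ along the bisimulation $Z$.

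For $(1)\Rightarrow(2)$, let $q=\mathrm{Qr}(\varphi)$, $l=2^q-1$, and let $P=\{p_0,\dots,p_{k-1}\}$ cover the predicates of $\varphi$. Take the same finite characteristic formula $\psi\equiv\bigvee\{\chi_{aS}\mid aS\in K^P_{l+1},\ aS\vDash\varphi(a)\}$. Every $aS$ is a finite tree, so the defining condition only consults finite models. Lemma~\ref{mmm} and the descending-height lemma used in the final step are purely about $\chi_{aS}$ and $k$-restricted $l$-bisimulations, so they carry over verbatim. The real work is to show that a $\varphi$ which is bisimulation invariant on finite models is $k$-restricted $l$-bisimulation invariant on finite models. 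That is, we need finite versions of Lemmas~\ref{bisimimlloc}, \ref{resbisiminv} and \ref{krlbisim1}.

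I expect this to be the main obstacle, because those proofs pass through tree unravellings and the disjoint unions $N,N'$. The unravelling of a finite model is in general infinite, so the plan is to use only the $l$-local unravelling $M^{l}[a]$, which is finite whenever $M$ is (paths of length $<l$ over a finite $W$). Then $N=\bigsqcup_{i<q}M_{A_i}\sqcup\bigsqcup_{i<q}M_{B_i}\sqcup M$ and $N'$ are finite disjoint unions of finitely many finite models. The identities $(M,a)\,\underline{\xleftrightarrow{}}\,(N,a)$ and $(N',a)\,\underline{\xleftrightarrow{}}\,(M^l[a],a)$ are given by explicit $\Delta^0_1$ relations.

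The delicate point is Otto's EF strategy in Lemma~\ref{bisimimlloc}, which requires $M$ to be a tree near $a$ in order to copy local neighbourhoods. I would avoid assuming $M$ is a tree by playing instead on $(M^{l'}[a],a)$ for a larger finite $l'$ (say $2l+1$), which is finite and bisimilar to $(M,a)$ up to depth $l'$. Alternatively, I would follow Otto's finite-model proof and use finite bisimilar companions in which short cycles are removed, for instance products with a finite group of large girth or the $l$-unravelling into finite path sets. Then I would verify that distance-$2^{q-m}$ neighbourhoods around chosen points are trees, so that $\mathrm{II}$'s local isomorphism choice is a finite search. In $\mathrm{RCA}_0$ this search gives $f_{\mathrm{II}}$ directly.

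With the finite versions of Lemmas~\ref{bisimimlloc}, \ref{resbisiminv} and \ref{krlbisim1} in place, the final argument of Theorem~\ref{vanB} applies to an arbitrary finite $M$ and $w$. First I would replace $(M,w)$ by its finite depth-$l$ unravelling, using $l$-locality. Then the descending-height lemma yields $aS\in K^P_{l+1}$ with $M,w\Vdash\chi_{aS}$, and restricted invariance transfers $\varphi$ between $M$ and $aS$ in both directions. This gives $M\vDash\forall x(\varphi(x)\leftrightarrow\mathrm{ST}_x(\psi))$ for every finite model $M$.
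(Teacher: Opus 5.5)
Your overall route, rerunning the proof of the semantic form with every model finite, is what the paper has in mind (it states Rosen's theorem without a separate proof). You also correctly single out the tree-unravelling step in Lemma~\ref{bisimimlloc} as the only non-routine point; elsewhere the finite truncations $M^{l}[a]$ do the job. However, your primary fix for that step fails. The model $M^{l'}[a]$ is finite and a tree, but it is only $l'$-bisimilar to $(M,a)$, not bisimilar. For example, if $M$ is a single reflexive point, $\Box^{l'}\bot$ holds at the root of $M^{l'}[a]$ but not in $M$. The hypothesis is invariance under full bisimulation of finite models, so nothing connects $M\vDash\varphi(a)$ with $M^{l'}[a]\vDash\varphi(a)$. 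The EF argument played on $M^{l'}[a]$ only yields $M^{l'}[a]\vDash\varphi(a)\Leftrightarrow M^{l}[a]\vDash\varphi(a)$, and getting back to $M$ would require $l'$-locality, which is what is being proved. Your fallback suggestions do not close the gap either. ``The $l$-unravelling into finite path sets'' is again only $l$-bisimilar. A product with a large-girth group removes short cycles but not backward edges, so $a$ and its successors may keep $R$-predecessors inside the $l$-neighbourhood of $a$. In that case $\exists y\,r(y,a)$, of quantifier rank $1$, already separates $N$ from $N'$, and $\mathrm{II}$ has no winning strategy.

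What the argument needs is a finite model $M'$ that is fully bisimilar to $(M,a)$ and agrees with the tree unravelling of $M$ within distance $l$ of $a$ in the Gaifman graph (edges taken in both directions). The partial unravelling provides this:
take all $R$-paths from $a$ of length at most $L$ for some $L>l$, and below each path $\sigma$ of length exactly $L$ attach a fresh copy of $M$, with an edge from $\sigma$ to the copy of $u$ whenever $\mathrm{last}(\sigma)\,R\,u$.
This model is finite and $\Delta^0_1$-definable from $M$. Sending each path to its last element and each copied world to its original is a bisimulation. Every path of length at most $L$ other than $\langle a\rangle$ has exactly one predecessor, its parent, and $\langle a\rangle$ has none. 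Hence the $l$-neighbourhood of the root is the depth-$\le l$ forward tree, and $M'^{\,l}[a]\cong M^{l}[a]$. Use $M'$ (and $q$ copies of it) in place of the unravelled $M$ when building $N$ and $N'$. Both disjoint unions are then finite, and Otto's strategy for $\mathrm{II}$ goes through unchanged. Bisimulation invariance for finite models then gives $M\vDash\varphi(a)\Leftrightarrow M'\vDash\varphi(a)\Leftrightarrow M'^{\,l}[a]\vDash\varphi(a)\Leftrightarrow M^{l}[a]\vDash\varphi(a)$. With this in place, the rest of your plan works as stated.
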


\subsection{Syntactic form of the van Benthem characterization theorem}
Now, we show the following proposition.

\begin{prop}[$\mathrm{WKL}_0$]\label{pp}
Let $\varphi(x)\in\mathrm{Fml}_{\mathcal{FO}}$ which has no free number variables except $x$. Then the statement ``$\varphi(x)$ is bisimulation invariant'' can be expressed by a $\Sigma^0_1$ sentence.
\end{prop}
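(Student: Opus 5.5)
The plan is to show that, in $\mathrm{WKL}_0$, ``$\varphi(x)$ is bisimulation invariant'' is equivalent to
\begin{equation*}
\exists\psi\in\mathrm{Fml}\,\exists p\,\bigl(\mathrm{Prf}_{\mathcal{FO}}(\emptyset,p)\land(\exists i<lh(p))(p(i)=\forall x(\varphi(x)\leftrightarrow\mathrm{ST}_{x}(\psi)))\bigr).
\end{equation*}
The matrix of this formula is $\Delta^0_0$ in $\psi$ and $p$: $\mathrm{Prf}_{\mathcal{FO}}$ only involves bounded quantification over the code $p$, and membership in $\mathrm{Axm}_{\mathcal{FO}}$ and $\mathrm{Fml}$ is decidable. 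So the whole formula is $\Sigma^0_1$. If one wants a sharper form, the existential over $\psi$ can be removed. The proof of Theorem~\ref{vanB} (semantic form) produces an explicit candidate
\begin{equation*}
\psi_\varphi\equiv\bigvee\{\chi_{aS}\mid aS\in K^{P}_{l+1},\ aS\vDash\varphi(a)\}.
\end{equation*}
Here $l=2^{\mathrm{Qr}(\varphi)}-1$, and $P$ is the finite set of propositional letters occurring in $\varphi$. This formula is computed from $\varphi$ by a primitive recursive procedure, because $K^{P}_{l+1}$ is a finite set of finite models and truth of $\varphi(a)$ in a finite model is decidable. The invariance statement would then be equivalent to $\exists p$ ``$p$ proves $\forall x(\varphi(x)\leftrightarrow\mathrm{ST}_x(\psi_\varphi))$''.

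The two directions go as follows.
\begin{enumerate}
\item \emph{From the $\Sigma^0_1$ sentence to invariance.} This is exactly Proposition~\ref{2im1vB1}, which uses only the soundness theorem and so is available already in $\mathrm{RCA}_0$.
\item \emph{From invariance to the $\Sigma^0_1$ sentence.} Assume $\varphi$ is bisimulation invariant. By the proof of Theorem~\ref{vanB} in $\mathrm{RCA}_0$, we get $\vDash\forall x(\varphi(x)\leftrightarrow\mathrm{ST}_x(\psi_\varphi))$, that is, this sentence holds in every standard model. Apply the weak completeness theorem for $\mathcal{FO}$ with $T=\emptyset$, which holds in $\mathrm{WKL}_0$. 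Its hypothesis is that $T$ is consistent, and $\emptyset$ is consistent by soundness together with any one-point standard model. Weak completeness then yields $\mathrm{Pbl}_{\mathcal{FO}}(\emptyset,\forall x(\varphi(x)\leftrightarrow\mathrm{ST}_x(\psi_\varphi)))$, which is the $\Sigma^0_1$ sentence. This is essentially the hybrid form, Corollary~\ref{vanB}.
\end{enumerate}

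The main point to check carefully is the second direction. We must make sure that the semantic argument of Theorem~\ref{vanB} really delivers the universal statement over \emph{all} standard models, for the fixed, uniformly defined $\psi_\varphi$. In particular, the reduction there to tree models of height $l$, via unravelling, must preserve both $\varphi$ (by $l$-locality and Lemma~\ref{resbisiminv}) and $\mathrm{ST}_x(\psi_\varphi)$ (since $\mathrm{ST}_x(\psi_\varphi)$ has modal depth at most $l$). We also need that the definition of $\psi_\varphi$ does not depend on the invariance hypothesis; it is defined outright from $\varphi$. Once this is in place, the completeness step is the only use of $\mathrm{WKL}$, and the remaining work is routine coding of proofs.
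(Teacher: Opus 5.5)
Your argument is correct, but it reaches the proposition by a different route than the paper. You derive the $\Sigma^0_1$ form from the characterization theorem itself. By the semantic form (provable in $\mathrm{RCA}_0$), Proposition~\ref{2im1vB1} and completeness, invariance is equivalent to $\exists\psi\,\mathrm{Pbl}_{\mathcal{FO}}\bigl(\emptyset,\forall x(\varphi(x)\leftrightarrow\mathrm{ST}_{x}(\psi))\bigr)$; this is just the hybrid form read as a definition. The paper does not use the characterization theorem here at all. It works in the doubled language $\hat{\mathcal{L}}$ with domain predicates and a symbol $Z$, and axiomatizes ``$Z$ is a bisimulation'' by $T_{Z}$. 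It then relativizes $\varphi$ to both sides to form $\tilde{\varphi}\equiv\forall y\forall z(Z(y,z)\rightarrow(\varphi^{\pi}(y)\leftrightarrow\varphi^{\pi^{\prime}}(z)))$ and notes that invariance amounts to $T_{Z}\vDash\tilde{\varphi}$. Finally it applies G\"odel's completeness theorem to get $\mathrm{Pbl}_{\mathcal{FO}(\hat{\mathcal{L}})}(T_{Z},\tilde{\varphi})$.

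Your route is shorter and reuses results already in hand. It also sidesteps a delicate step of the paper's route: turning arbitrary $M,N,Z$ into a standard model of $T_{Z}$ with a full valuation. Such a valuation must decide $\exists z\,Z(y,z)$, which is only $\Sigma^0_1$ in $Z$.

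What the paper's route buys is an intrinsic $\Sigma^0_1$ rendering, ``invariance is derivable from the bisimulation axioms'', that does not presuppose the theorem being characterized. This particular sentence is what the rest of the section uses: the syntactic form, Lemma~\ref{plrarrow} and Proposition~\ref{vanBequivWC} are all stated in terms of $\mathrm{Pbl}_{\mathcal{FO}(\hat{\mathcal{L}})}(T_{Z},\tilde{\varphi})$. If your sentence were taken as the official $\Sigma^0_1$ form of invariance, the syntactic form would collapse to the tautology $(2)\leftrightarrow(2)$, and its $\mathrm{PRA}$-provability would say nothing. So you prove the proposition as literally stated, but not the specific formalization the later results depend on.
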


\begin{proof}
Let ${\mathcal{L}}^{\prime}_{\mathcal{FO}}$ be the first-order language (with equality) which has unary predicates $P^{\prime}_0, P^{\prime}_1, \dots$ corresponding to the proposition letters $p_0, p_1, \dots$, and a binary relation symbol $r^{\prime}$. Let $\hat{\mathcal{L}}=\mathcal{L}_{\mathcal{FO}}\cup{\mathcal{L}}^{\prime}_{\mathcal{FO}}\cup\{D_{\mathcal{L}_{\mathcal{FO}}}, D_{{\mathcal{L}}^{\prime}_{\mathcal{FO}}}\}\cup\{Z\}$ where $D_{\mathcal{L}_{\mathcal{FO}}}$ and $D_{{\mathcal{L}}^{\prime}_{\mathcal{FO}}}$ are unary predicates and $Z$ is a $2$-ary relation symbol corresponding to the bisimulation. In a similar way as $\mathcal{L}_{\mathcal{FO}}$, we may show in $\mathrm{RCA}_0$ that there exist sets $\mathrm{Trm}_{\mathcal{FO}(\hat{\mathcal{L}})}$, $\mathrm{Fml}_{\mathcal{FO}(\hat{\mathcal{L}})}$, $\mathrm{Snt}_{\mathcal{FO}(\hat{\mathcal{L}})}$, and $\mathrm{Axm}_{\mathcal{FO}(\hat{\mathcal{L}})}$ consisting of all $\hat{\mathcal{L}}$-terms, $\hat{\mathcal{L}}$-formulas, $\hat{\mathcal{L}}$-sentences, and axioms of first-order logic in $\hat{\mathcal{L}}$ respectively. Moreover, as in the case of modal logic, we can define the provability predicate of first-order logic $\mathrm{Pbl}_{\mathcal{FO}(\hat{\mathcal{L}})}(X, \psi)$ in $\mathrm{RCA}_0$. 
\vspace{2mm}

Let $T_{Z}$ be the set of the following $\hat{\mathcal{L}}$-sentences:
\begin{itemize}
    \item $\exists y\exists z Z(y, z)$,
    \item $\forall y\forall z(Z(y, z)\rightarrow D_{\mathcal{L}_{\mathcal{FO}}}(y)\land D_{{\mathcal{L}}^{\prime}_{\mathcal{FO}}}(z))$,
    \item $\forall y\forall z(Z(y, z)\rightarrow(P_{i}(y)\leftrightarrow P^{\prime}_{i}(z)))$ for each $i$,
    \item $\forall y\forall z\forall y^{\prime}(D_{\mathcal{L}_{\mathcal{FO}}}(y^{\prime})\land Z(y, z)\land r(y, y^{\prime})\rightarrow\exists z^{\prime}(D_{{\mathcal{L}}^{\prime}_{\mathcal{FO}}}(z^{\prime})\land Z(y^{\prime}, z^{\prime})\land r^{\prime}(z, z^{\prime})))$,
    \item $\forall y\forall z\forall z^{\prime}(D_{{\mathcal{L}}^{\prime}_{\mathcal{FO}}}(z^{\prime})\land Z(y, z)\land r^{\prime}(z, z^{\prime})\rightarrow\exists y^{\prime}(D_{\mathcal{L}_{\mathcal{FO}}}(y^{\prime})\land Z(y^{\prime}, z^{\prime})\land r(y, y^{\prime})))$.
\end{itemize}
$T_Z$ is the set of sentences in $\hat{\mathcal{L}}$ expressing that $Z$ is a bisimulation between $D_{\mathcal{L}}$ and $D_{\mathcal{L}'}$. Clearly, $T_{Z}$ is consistent.

Next, we define the translations $\pi,{\pi}^{\prime}:\mathcal{L}_{\mathcal{FO}}\rightarrow\mathrm{Fml}_{\mathcal{FO}(\hat{\mathcal{L}})}$ as follows:
\begin{equation*}
{P^{\pi}(z)}\equiv\pi(P(z))=P(z)\land D_{\mathcal{L}_{\mathcal{FO}}}(z),
\end{equation*}
\begin{equation*}
r^{\pi}(y, z)\equiv \pi(r(y, z))=r(y, z)\land  D_{\mathcal{L}_{\mathcal{FO}}}(y)\land D_{\mathcal{L}_{\mathcal{FO}}}(z),
\end{equation*}
\begin{equation*}
{P^{{\pi}^{\prime}}(y)}\equiv{\pi}^{\prime}(P(y))=P^{\prime}(y)\land D_{{\mathcal{L}}^{\prime}_{\mathcal{FO}}}(y),
\end{equation*}
\begin{equation*}
r^{{\pi}^{\prime}}(y, z)\equiv{\pi}^{\prime}(r(y, z))=r^{\prime}(y, z)\land D_{{\mathcal{L}}^{\prime}_{\mathcal{FO}}}(y)\land D_{{\mathcal{L}}^{\prime}_{\mathcal{FO}}}(z).
\end{equation*}

Then we may extend the translations from $\mathrm{Fml}_{\mathcal{FO}}$ to $\mathrm{Fml}_{\mathcal{FO}(\hat{\mathcal{L}})}$ as follows:
\begin{itemize}
    \item $(\varphi_{0}\land\varphi_{1})^{\pi}\equiv{\varphi_{0}}^{\pi}\land{\varphi_{1}}^{\pi}$, \,$(\varphi_{0}\land\varphi_{1})^{{\pi}^{\prime}}\equiv{\varphi_{0}}^{{\pi}^{\prime}}\land{\varphi_{1}}^{{\pi}^{\prime}}$,
    \item $(\varphi_{0}\lor\varphi_{1})^{\pi}\equiv{\varphi_{0}}^{\pi}\lor{\varphi_{1}}^{\pi}$, \,$(\varphi_{0}\lor\varphi_{1})^{{\pi}^{\prime}}\equiv{\varphi_{0}}^{{\pi}^{\prime}}\lor{\varphi_{1}}^{{\pi}^{\prime}}$,
    \item $(\neg\varphi)^{\pi}\equiv\neg\varphi^{\pi}$,\,$(\neg\varphi)^{{\pi}^{\prime}}\equiv\neg\varphi^{{\pi}^{\prime}}$,
    \item $(\varphi_{0}\rightarrow\varphi_{1})^{\pi}\equiv{\varphi_{0}}^{\pi}\rightarrow{\varphi_{1}}^{\pi}$,\,$(\varphi_{0}\rightarrow\varphi_{1})^{{\pi}^{\prime}}\equiv{\varphi_{0}}^{{\pi}^{\prime}}\rightarrow{\varphi_{1}}^{{\pi}^{\prime}}$,
    \item $(\forall z\varphi(z))^{\pi}\equiv\forall z(D_{\mathcal{L}_{\mathcal{FO}}}(z)\rightarrow(\varphi(z))^{\pi})$,\,$(\forall z\varphi(z))^{{\pi}^{\prime}}\equiv\forall z(D_{{\mathcal{L}}^{\prime}_{\mathcal{FO}}}(z)\rightarrow(\varphi(z))^{{\pi}^{\prime}})$,
    \item $(\exists z\varphi(z))^{\pi}\equiv\exists z(D_{\mathcal{L}_{\mathcal{FO}}}(z)\land(\varphi(z))^{\pi})$,\,$(\exists z\varphi(z))^{{\pi}^{\prime}}\equiv\exists z(D_{{\mathcal{L}}^{\prime}_{\mathcal{FO}}}(z)\land(\varphi(z))^{{\pi}^{\prime}})$.
\end{itemize}

Let $\tilde{\varphi}\equiv\forall y\forall z(Z(y, z)\rightarrow(\varphi^{\pi}(y)\leftrightarrow\varphi^{{\pi}^{\prime}}(z)))$. Then $\varphi(x)$ is bisimulation invariant if and only if $T_{Z}\vDash\tilde{\varphi}$, that is, for every standard model $M\vDash\tilde{\varphi}$ on $\hat{\mathcal{L}}$, if $M$ is a model of $T_{Z}$, then $M$ satisfies $\tilde{\varphi}$. Thus, by G\"odel's completeness theorem, i.e., by $\mathrm{WKL}_{0}$, $T_{Z}\vDash\tilde{\varphi}$ if and only if $\mathrm{Pbl}_{\mathcal{FO}(\hat{\mathcal{L}})}(T_{Z}, \tilde{\varphi})$. Therefore, a $\Sigma^0_1$ sentence can express the condition that $\varphi(x)$ is bisimulation invariant.
\end{proof}

Using Proposition~\ref{pp}, we introduce the van Benthem characterization theorem (the syntactic form).  

\begin{state}[The van Benthem characterization theorem (the syntactic form)]\label{vanB2}
Let $\varphi(x)\in\mathrm{Fml}_{\mathcal{FO}}$ which has no free number variables except $x$. Then the following are equivalent:
\begin{enumerate}
    \item $\mathrm{Pbl}_{\mathcal{FO}(\hat{\mathcal{L}})}(T_{Z}, \tilde{\varphi})$ holds,
    \item there exists $\psi\in\mathrm{Fml}$ such that $\mathrm{Pbl}_{\mathcal{FO}}\bigl(\emptyset, \forall x(\varphi(x)\leftrightarrow\mathrm{ST}_{x}(\psi))\bigr)$ holds.
\end{enumerate}
\end{state}

\begin{cor}[$\mathrm{WKL}_0$]\label{vanB2}
The van Benthem characterization theorem (the syntactic form) holds.
\end{cor}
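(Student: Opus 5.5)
The plan is to show, inside $\mathrm{WKL}_0$, that each of the two conditions of Statement~\ref{vanB2} is equivalent to ``$\varphi(x)$ is bisimulation invariant'' (in the semantic sense used in the paper). The equivalence between them then follows. The tools are Gödel's completeness theorem for $\mathcal{FO}$ (available in $\mathrm{WKL}_0$), the soundness theorem, and Theorem~\ref{vanB} (the semantic form, already proved in $\mathrm{RCA}_0$).

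\emph{Step 1: condition (1) is equivalent to bisimulation invariance.} This is essentially the content of the proof of Proposition~\ref{pp}. Given standard models $M,N$ on $\mathcal{L}_{\mathcal{FO}}$ and a bisimulation $Z$, I will build a standard $\hat{\mathcal{L}}$-model of $T_Z$. Its domain is the disjoint union of the two domains, with $D_{\mathcal{L}_{\mathcal{FO}}}$ and $D_{\mathcal{L}'_{\mathcal{FO}}}$ naming the two parts and $Z$ interpreted by the given bisimulation. The point needing care is that a standard model must come with a full valuation $V$ on all sentences, and $\mathrm{RCA}_0$ cannot simply compute one. I will obtain it effectively from $V_M$ and $V_N$: by induction on formulas, the truth of an $\hat{\mathcal{L}}$-sentence reduces to truth of relativized $\pi$/$\pi'$ pieces plus atomic $Z$-facts. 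Alternatively, I can avoid building this model and apply completeness to a consistent extension instead.

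Conversely, any model of $T_Z$ restricts along $D_{\mathcal{L}_{\mathcal{FO}}}$ and $D_{\mathcal{L}'_{\mathcal{FO}}}$ to two $\mathcal{L}_{\mathcal{FO}}$-models with a bisimulation between them. The translations satisfy $\varphi^{\pi}(y)$ iff the restricted model satisfies $\varphi(y)$; this is an induction on $\varphi$ using $\mathrm{I}\Sigma^0_1$. So bisimulation invariance gives $T_Z\vDash\tilde\varphi$. The reverse implication, $T_Z\vDash\tilde\varphi$ giving invariance, goes through the model built in the first half. Gödel's completeness theorem ($\mathrm{WKL}_0$; $T_Z$ is consistent) then converts $T_Z\vDash\tilde\varphi$ into $\mathrm{Pbl}_{\mathcal{FO}(\hat{\mathcal{L}})}(T_Z,\tilde\varphi)$ and back, soundness giving the converse direction.

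\emph{Step 2: condition (2) is equivalent to bisimulation invariance.} By Theorem~\ref{vanB}, bisimulation invariance yields a $\psi$ with $\vDash\forall x(\varphi(x)\leftrightarrow\mathrm{ST}_x(\psi))$. Completeness applied with $X=\emptyset$ turns this into $\mathrm{Pbl}_{\mathcal{FO}}(\emptyset,\forall x(\varphi(x)\leftrightarrow\mathrm{ST}_x(\psi)))$; this is exactly Corollary~\ref{vanB} (hybrid form). In the other direction, Proposition~\ref{2im1vB1} gives bisimulation invariance from (2) directly in $\mathrm{RCA}_0$.

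Chaining the two steps gives (1)$\Leftrightarrow$(2). The main obstacle is Step 1: the correspondence between models of $T_Z$ and pairs of bisimilar models, in particular producing full valuations for the amalgamated $\hat{\mathcal{L}}$-structure. If that construction is awkward in $\mathrm{RCA}_0$, I would do it in $\mathrm{WKL}_0$ by a contrapositive. Suppose $\mathrm{Pbl}(T_Z,\tilde\varphi)$ fails. Then $T_Z\cup\{\neg\tilde\varphi\}$ is consistent, so completeness gives a model of it, and restricting that model yields two bisimilar models witnessing failure of invariance. For the other direction, suppose invariance fails via $M,N,Z$. Then I show by an $\mathrm{I}\Sigma^0_1$ induction that every $T_Z$-proof is sound in the amalgam, evaluated through $V_M$, $V_N$ and $Z$ only on the finitely many relevant formulas; this shows $\tilde\varphi$ is not provable.
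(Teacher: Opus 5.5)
Your route is the paper's: its proof is just ``by Proposition~\ref{pp} and the hybrid form in $\mathrm{WKL}_0$'', which is your Step~1 and Step~2 chained through bisimulation invariance. Step~2 is fine. So is the half of Step~1 that restricts a model of $T_Z$ along $D_{\mathcal{L}_{\mathcal{FO}}},D_{\mathcal{L}^{\prime}_{\mathcal{FO}}}$ and applies completeness to get invariance $\Rightarrow\mathrm{Pbl}_{\mathcal{FO}(\hat{\mathcal{L}})}(T_Z,\tilde\varphi)$.

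The gap is the converse, $\mathrm{Pbl}_{\mathcal{FO}(\hat{\mathcal{L}})}(T_Z,\tilde\varphi)\Rightarrow$ invariance, which you need for (1)$\Rightarrow$(2). The paper only asserts it inside Proposition~\ref{pp}, and neither of your arguments for it works.

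\begin{itemize}
\item Your first claim, that truth in the amalgam $(M\sqcup N,Z)$ reduces to $\pi/\pi^{\prime}$-pieces plus atomic $Z$-facts, is false, because quantifiers can range over $Z$. Take $M=N$ with empty accessibility relation and all $P_i$ false, so every non-empty $Z$ is a bisimulation. Let $Z=\{(h(n),n)\}$ for an injection $h$. Then the value of $\exists z\,Z(\overline{a},z)$ decides $a\in\mathrm{ran}(h)$, so a full valuation of the amalgam is not available in $\mathrm{WKL}_0$.
\item The same example defeats your fallback. $\exists z\,Z(y,z)$ is already a subformula of the axiom $\exists y\exists z\,Z(y,z)$ of $T_Z$, so evaluating ``only finitely many relevant formulas'' still requires a $\Sigma^0_1$-in-$Z$ set. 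The subformula $\exists z^{\prime}(D_{\mathcal{L}^{\prime}_{\mathcal{FO}}}(z^{\prime})\land Z(y^{\prime},z^{\prime})\land r^{\prime}(z,z^{\prime}))$ of the forth clause causes the same problem.
\item A Hilbert-style proof also contains formulas that are not subformulas of $T_Z\cup\{\tilde\varphi\}$. Without a valuation set, an $\mathrm{I}\Sigma^0_1$ induction along the proof has no formula to induct on.
\end{itemize}

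The missing idea is Skolemization together with the strong soundness theorem for weak models.
\begin{itemize}
\item Since $Z$ is a bisimulation, define $f(a,b,a^{\prime})=\min\{b^{\prime}: a^{\prime}Zb^{\prime}\land bR^{\prime}b^{\prime}\}$ when $aZb\land aRa^{\prime}$, and a default value otherwise. The search always terminates, so $f$ exists by $\Delta^0_1$-comprehension. Define the back witness $g$ in the same way.
\item Replace $\exists y\exists z\,Z(y,z)$ by $Z(c,d)$, with $c,d$ interpreted as $w,v$.
\item The Skolemized theory $T_Z^{\mathrm{Sk}}$ is universal and true in the amalgam. Hence $T_Z^{\mathrm{Sk}}\cup\{Z(c,d),\varphi^{\pi}(c),\neg\varphi^{\pi^{\prime}}(d)\}$ has a weak model:
  \begin{itemize}
  \item quantifier-free instances are decidable from $Z,R,R^{\prime},f,g$;
  \item universal subformulas of the Skolem axioms get value $1$;
  \item subformulas of $\varphi^{\pi},\varphi^{\pi^{\prime}}$ are evaluated through $V_M,V_N$. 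Parameters outside the relevant domain are never reached by the relativized quantifiers, so they can be eliminated syntactically.
  \end{itemize}
\item Strong soundness ($\mathrm{RCA}_0$, via cut elimination) makes this set consistent. Since $T_Z^{\mathrm{Sk}}$ proves every axiom of $T_Z$, you get $\neg\mathrm{Pbl}_{\mathcal{FO}(\hat{\mathcal{L}})}(T_Z,\tilde\varphi)$.
\end{itemize}
With this step supplied, your chain goes through in $\mathrm{WKL}_0$.
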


\begin{proof}
By Proposition~\ref{pp} and Corollary~\ref{vanB}.
\end{proof}

\begin{thm}
The van Benthem characterization theorem (the syntactic form) is also provable in $\mathrm{PRA}$.
\end{thm}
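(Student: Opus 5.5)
The plan is to derive the PRA result from the $\mathrm{WKL}_0$ version (Corollary~\ref{vanB2}) by $\Pi^0_2$-conservativity of $\mathrm{WKL}_0$ over $\mathrm{PRA}$. To use conservativity, I will recast the equivalence as a family of $\Pi^0_2$ sentences.

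The direction $(2)\Rightarrow(1)$ is the universal closure of an implication between two $\Sigma^0_1$ formulas, $\exists \psi\,\exists p\,\mathrm{Prf}(\dots)\rightarrow \exists p'\,\mathrm{Prf}_{\mathcal{FO}(\hat{\mathcal L})}(T_Z,\tilde\varphi,p')$. This is $\Pi^0_2$, since $\forall\varphi\forall\psi\forall p\exists p'(\dots)$ with a primitive recursive (indeed $\Delta_0$) matrix. Note that $T_Z$ is a primitive recursive set of axioms, so $\mathrm{Pbl}_{\mathcal{FO}(\hat{\mathcal L})}(T_Z,\cdot)$ is $\Sigma^0_1$. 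The direction $(1)\Rightarrow(2)$ likewise has the form $\forall\varphi\forall p\,(\mathrm{Prf}(T_Z,\tilde\varphi,p)\rightarrow\exists\psi\exists p'\,\mathrm{Prf}_{\mathcal{FO}}(\emptyset,\forall x(\varphi\leftrightarrow \mathrm{ST}_x(\psi)),p'))$, which is again $\Pi^0_2$. Both directions are provable in $\mathrm{WKL}_0$ by Corollary~\ref{vanB2}, where for $(2)\Rightarrow(1)$ one goes through Proposition~\ref{2im1vB1}, Proposition~\ref{pp} and the completeness theorem. Hence both are provable in $\mathrm{PRA}$, and this yields the theorem.

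I will check carefully that the matrices really are quantifier-free in the language of $\mathrm{PRA}$: the proof predicates, the translations $\pi,\pi'$, the formula $\tilde\varphi$, $\mathrm{ST}_x$, and membership in $T_Z$ are all primitive recursive. The $\Pi^0_2$-conservativity theorem \cite{MR2517689} is stated for sentences of the form $\forall n\exists m\,\theta$ with $\theta$ primitive recursive. Pairing quantifiers and using bounded quantification within primitive recursive functions is routine.

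The main obstacle is to confirm that the $\mathrm{WKL}_0$-proof genuinely establishes each direction as stated for arbitrary (possibly nonstandard) $\varphi$. In particular, the step from $\mathrm{Pbl}(T_Z,\tilde\varphi)$ to semantic bisimulation invariance uses only soundness, which holds in $\mathrm{RCA}_0$. The reverse step, from bisimulation invariance to provability of $\forall x(\varphi\leftrightarrow\mathrm{ST}_x(\psi))$, uses Theorem~\ref{vanB} followed by Gödel's completeness theorem applied to the single sentence, which is fine in $\mathrm{WKL}_0$. Should a more informative result be desired, I would additionally note that Herbrand-style analysis of the conservation proof yields a primitive recursive function sending a proof of $\tilde\varphi$ from $T_Z$ to the pair $(\psi,p')$. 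Alternatively, one could observe directly that $\psi$ is primitive recursive in $\varphi$ (it is the disjunction over $K^P_{l+1}$ with $l=2^{\mathrm{Qr}(\varphi)}-1$). However, deciding which $aS$ satisfy $\varphi$ is only needed implicitly, and conservativity handles this without further work.
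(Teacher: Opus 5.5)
Your proposal is correct and follows the paper's argument: the paper also notes that the syntactic form is expressed by a $\Pi^0_2$ sentence and transfers the $\mathrm{WKL}_0$ corollary to $\mathrm{PRA}$ by $\Pi^0_2$-conservativity. Your split into two $\Pi^0_2$ directions, and your check that $T_Z$, $\tilde\varphi$, $\mathrm{ST}_x$ and the proof predicates are primitive recursive, only make explicit what the paper leaves implicit. Your aside that the soundness step already goes through in $\mathrm{RCA}_0$ is not needed, since the whole argument runs in $\mathrm{WKL}_0$.
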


\begin{proof}
A $\Pi^0_2$ sentence expresses the van Benthem characterization theorem (the syntactic form). By Corollary~\ref{vanB2} and the fact that $\mathrm{WKL}_0$ is a $\Pi^0_2$-conservative extension of $\mathrm{PRA}$, the van Benthem characterization theorem (the syntactic form) can also be proved in  $\mathrm{PRA}$.
\end{proof}

\subsection{Hybrid form of the van Benthem characterization theorem}
In this subsection, we show that the hybrid form of the van Benthem characterization theorem is equivalent to the lightface version of $\mathrm{WKL}$ over $\mathrm{RCA}_0$.

\begin{dfn}[$\mathrm{RCA}_0$]
Let $\varphi(\vec{x})\in\mathrm{Fml}_{\mathcal{FO}}$ and $P_{k}(y)$. Then we denote the formula $\varphi^{P_{k}}(\vec{x})$ as follows:
\begin{itemize}
    \item $(r(x, y))^{P_{k}}\equiv r(x, y)\land P_{k}(x)\land P_{k}(y)$.
    \item $(P_{i}(x))^{P_{k}}\equiv P_{i}(x)\land P_{k}(x)$ (for all $i$),
    \item $(\varphi\rightarrow \psi)^{P_{k}}\equiv\varphi^{P_{k}}\rightarrow \psi^{P_{k}}$,
    \item $(\forall y\varphi(\vec{x}, y))^{P_{k}}\equiv\forall y(P_{k}(y)\rightarrow\varphi(\vec{x}, y)^{P_{k}})$,
%    \item $(\exists y\varphi(\vec{x}, y))^{P_{k}}\equiv\exists y(P_{k}(y)\land\varphi(\vec{x}, y)^{P_{k}})$.
\end{itemize}
\end{dfn}

\begin{lem}[$\mathrm{RCA}_0$]\label{unsbisim}
Let $\psi\in\mathrm{Snt}_{\mathcal{FO}}$ which contains only $P_{0}(y), \dots, P_{k-1}(y)$, and $r(y, y^{\prime})$ as first-order predicates. Then $\psi$ is unsatisfiable on $\mathcal{L}_{\mathcal{FO}}$ if and only if $\psi^{P_{k}}\land r(x, x)\land\neg P_{k}(x)$ is bisimulation invariant.
\end{lem}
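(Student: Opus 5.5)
We prove both directions directly, using the convention (as in Simpson's treatment) that standard models have non-empty domains. One subtlety is that $\psi^{P_{k}}$ is evaluated on the part of a model where $P_{k}$ holds, and that part may be empty. To avoid this we first replace $\psi$ by $\exists y(y=y)\land\psi$. This does not change satisfiability over non-empty domains, and it ensures that $\psi^{P_{k}}$ implies $\exists y P_{k}(y)$. We write $\chi(x)\equiv\psi^{P_{k}}\land r(x,x)\land\neg P_{k}(x)$ throughout.

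\emph{Unsatisfiable $\Rightarrow$ invariant.} Suppose $\psi$ is unsatisfiable. We show that $\chi(x)$ fails at every point of every model, so it is trivially bisimulation invariant. Suppose instead that $M\vDash\chi(w)$. Then $M\vDash\psi^{P_{k}}$, and hence $D=\{u\in M_{\star}\mid V(P_{k}(\overline{u}))=1\}$ is non-empty; this set exists by $\Delta^0_1$-comprehension since $V$ is given. Define a standard model $M_{D}$ on $D$ by $V_{D}(\theta)\coloneq V(\theta^{P_{k}})$. This mirrors the definition of $V^{l}[a]$ via $\varphi^{\leq l}_{z}$. We then check the clauses of a standard model. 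The only nontrivial clause is the one for $\forall$: $V((\forall y\theta)^{P_{k}})=1$ iff $V(\theta^{P_{k}}(\overline{u}))=1$ for all $u\in D$. This is exactly the clause for $\forall$ over $D$. Then $M_{D}\vDash\psi$, which contradicts unsatisfiability. No induction beyond $\mathrm{I}\Sigma^0_1$ is needed, because $V_{D}$ is defined explicitly from $V$.

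\emph{Invariant $\Rightarrow$ unsatisfiable.} We argue by contraposition: suppose $A=(A_{\star},V_{A})\vDash\psi$. Build $M$ as the disjoint union of $A$ with a single new point $c$. In $M$, $r$ is $r^{A}\cup\{(c,c)\}$, $P_{k}$ holds exactly on $A_{\star}$, and $P_{i}$ for $i<k$ is as in $A$ and false at $c$. Build $N$ as the disjoint union of $A$ with an irreflexive chain $c_{0}\,r\,c_{1}\,r\,c_{2}\cdots$. In $N$, $P_{k}$ again holds exactly on $A_{\star}$, and all $P_{i}$ are false on the chain.

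The relation $Z=\{(a,a)\mid a\in A_{\star}\}\cup\{(c,c_{n})\mid n\in\mathbb{N}\}$ is a bisimulation, and it is checkable directly in $\mathrm{RCA}_0$. The point $c$ sees only $c$, each $c_{n}$ sees only $c_{n+1}$, and the two components of each model are disconnected. We also have $M\vDash\chi(c)$. Indeed, $\psi^{P_{k}}$ holds in $M$ because its truth reduces to truth of $\psi$ in $A$, $c$ is reflexive, and $\neg P_{k}(c)$ holds. On the other hand $N\not\vDash\chi(c_{0})$, since $\neg r(c_{0},c_{0})$. Hence $\chi$ is not bisimulation invariant.

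\emph{Main obstacle.} The main obstacle is producing full valuations for $M$ and $N$ inside $\mathrm{RCA}_0$, since in general one cannot extend an atomic valuation to all formulas. The plan is to exploit that the extra parts are very homogeneous, and to handle the unions in the same way as in the proof of Lemma~\ref{bisimimlloc}. For $M$, the extra part is a single point. A quantifier $\forall y\theta$ then splits into ``for all $a\in A_{\star}$'' together with the instance $y=c$. By a primitive recursive syntactic transformation (substituting a constant for $c$ and simplifying atoms containing $c$), every sentence over $M$ reduces to a sentence evaluated by $V_{A}$.

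For $N$, the chain component is more delicate. The plan is to replace the infinite chain by a pure-$r$ structure whose first-order theory is decidable by a primitive recursive procedure, for example by quantifier elimination for the successor structure $(\mathbb{N},S)$. Truth of any sentence in $N$ then reduces to $V_{A}$ together with that decision procedure. A simpler alternative is to take the chain component to be a two-element bisimilar substitute, for instance a $2$-cycle $c_{0}\leftrightarrows c_{1}$ with $Z=\{(c,c_{0}),(c,c_{1})\}$. This still has $\neg r(c_{0},c_{0})$, and it is finite, so the same one-point-style reduction applies. I would use this finite variant.
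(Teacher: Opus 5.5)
Your argument is correct for the amended formula and follows the paper's construction. In the nontrivial direction you make $P_k$ hold exactly on a model of $\psi$, then compare its disjoint union with a reflexive $\neg P_k$-point against its union with a $\neg P_k$ two-cycle; your final two-cycle choice is exactly the paper's $M_A$ versus $M_B$. You set $P_k$ directly in the union, whereas the paper passes through $M^\star$ and Corollary~\ref{k-efgame}. Your relativized model $M_D$ and your syntactic reduction of the finite disjoint union to $V_A$ make explicit what the paper leaves to ``the definition of bisimulations'' and to its remark on disjoint unions.

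The step that needs correcting is the opening replacement of $\psi$ by $\exists y(y=y)\land\psi$, which you present as a harmless normalization. It is not one. $\chi$ is built syntactically from $\psi$, so after the replacement you are proving the lemma for a different formula, and nothing transfers back to $\psi^{P_k}\land r(x,x)\land\neg P_k(x)$.

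Nor could anything transfer back, because the lemma as literally stated is false for sentences satisfied only by the empty structure. Take $k=1$ and $\psi\equiv\forall y\,\bot$. It is unsatisfiable because domains are non-empty, yet $\psi^{P_1}\equiv\forall y(P_1(y)\rightarrow\bot)$. Let $M$ be a single reflexive point $c$ with all predicates false, and let $N$ be $M$ together with an isolated point $d$ satisfying $P_1$. Then $Z=\{(c,c)\}$ is a bisimulation with $M\vDash\chi(c)$ but $N\not\vDash\chi(c)$, so $\chi$ is not bisimulation invariant.

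You should therefore say explicitly that you are proving a corrected statement, equivalently one in which the conjunct $\exists y\,P_k(y)$ is added to $\chi$, rather than making a without-loss-of-generality reduction. The paper's one-line proof of the forward direction has exactly this hole. So does the passage to $W'=\{w\in W\mid P_k(w)\}$ in Proposition~\ref{vanBequivWC}, where $W'$ may be empty, so the corrected $\theta$ should be used there as well. With this reframing your proof is correct.
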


\begin{proof}
Let $\psi\in\mathrm{Snt}_{\mathcal{FO}}$ which contains only $P_{0}(y), \dots, P_{k-1}(y)$, and $r(y, y^{\prime})$ as first-order predicates. If $\psi$ is unsatisfiable, then $\psi^{P_{k}}\land r(x, x)\land\neg P_{k}(x)$ is bisimulation invariant by the definition of bisimulations. We assume that $\psi$ is satisfiable, that is, there exists a model $M=(W, R, V)$ such that $M\vDash\psi$. Similarly to Lemma~\ref{krlbisim1}, we define $M^{\star}=(W, R, V^{\star})$ (that is, $M^{\star}\vDash\forall xP_{k}(x)$). Then, by Lemma~\ref{k-efgame}, $M^{\star}\vDash\psi$, so, $M^{\star}\vDash\psi^{P_{k}}$. Next, we define two models $M_{A}=(\{a\}, R_{a}, V_{a})$ and $M_{B}=(\{b_{0}, b_{1}\}, R_{b}, V_{b})$ as follows: 
\begin{itemize}
    \item $R_{a}=\{(a, a)\}$, $V_{a}(\neg P_{k}(a))=1$, 
    \item $R_{b}=\{(b_{0}, b_{1}), (b_{1}, b_{0})\}$, $V_{b}(\neg P_{k}(b_{0}))=1$, and $V_{b}(\neg P_{k}(b_{1}))=1$.
\end{itemize}
We consider two disjoint unions of models $N=M^{\star}\sqcup M_{A}$ and $N^{\prime}=M^{\star}\sqcup M_{B}$. Then $(N, a)\,\underline{\xleftrightarrow{}}\,(N^{\prime}, b_{0})$ and $N\vDash\psi^{P_{k}}\land r(a, a)\land\neg P_{k}(a)$ and $N^{\prime}\not\vDash\psi^{P_{k}}\land r(b_{0}, b_{0})\land\neg P_{k}(b_{0})$. Thus, $\psi^{P_{k}}\land r(x, x)\land\neg P_{k}(x)$ is not bisimulation invariant.
\end{proof}

\begin{lem}[$\mathrm{WKL}_0$]\label{plrarrow}
Let $\psi\in\mathrm{Snt}_{\mathcal{FO}}$ which contains only $P_{0}(y), \dots, P_{k-1}(y)$, and $r(y, y^{\prime})$ as first-order predicates, and let $\theta(x)\equiv\psi^{P_{k}}\land r(x, x)\land\neg P_{k}(x)$. Then $\mathrm{Pbl}_{\mathcal{FO}(\hat{\mathcal{L}})}(T_{Z}, \tilde{\theta})$ holds if and only if $\mathrm{Pbl}_{\mathcal{FO}}(\psi, \bot)$ holds.
\end{lem}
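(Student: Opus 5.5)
We work in $\mathrm{WKL}_0$, where G\"odel's completeness theorem for $\mathcal{FO}$ (and, in the same way, for $\mathcal{FO}(\hat{\mathcal{L}})$) is available. The plan is to pass from each provability statement to the corresponding semantic statement, and then connect the two semantic statements through Lemma~\ref{unsbisim}.

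On the left side, completeness applied to the consistent theory $T_Z$ gives
\[
\mathrm{Pbl}_{\mathcal{FO}(\hat{\mathcal{L}})}(T_{Z}, \tilde{\theta})\iff T_{Z}\vDash\tilde{\theta}.
\]
As in the proof of Proposition~\ref{pp}, the right-hand side is equivalent to ``$\theta(x)$ is bisimulation invariant''. To see this, take a standard $\hat{\mathcal{L}}$-model of $T_Z$. Restricting it to $D_{\mathcal{L}_{\mathcal{FO}}}$ and $D_{\mathcal{L}'_{\mathcal{FO}}}$ yields two $\mathcal{L}_{\mathcal{FO}}$-models $M$ and $N$ together with a bisimulation $Z$ between them. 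Conversely, any such triple $(M,N,Z)$ assembles into a model of $T_Z$ on the disjoint union of the domains.

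The one point needing care is that $\varphi^{\pi}$ evaluated in the big model agrees with $\varphi$ evaluated in the restricted model. We prove this by induction on formulas using $\mathrm{I}\Sigma^0_1$, in the same way as the relativization arguments already used for $\varphi^{\leq l}_z$ and $\varphi^{P_k}$. In this translation a standard model needs its full valuation $V$. When restricting, we obtain $V$ by defining $V_M(\varphi(\vec a)) = V(\varphi^{\pi}(\vec a))$. When assembling, we take the disjoint-union valuation, which is built effectively from the two given valuations, as the paper notes via~\cite{MR2060932}.

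On the right side, completeness gives that $\mathrm{Pbl}_{\mathcal{FO}}(\psi,\bot)$ holds iff $\psi\vDash\bot$, that is, iff $\psi$ is unsatisfiable on $\mathcal{L}_{\mathcal{FO}}$. Lemma~\ref{unsbisim} says this holds iff $\theta(x)\equiv\psi^{P_k}\land r(x,x)\land\neg P_k(x)$ is bisimulation invariant. Chaining the equivalences from both sides proves the lemma.

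The main obstacle is the left-hand identification of $T_Z\vDash\tilde\theta$ with bisimulation invariance. The task is to construct the restricted valuations and the disjoint-union valuations inside $\mathrm{RCA}_0$ and to verify that the translations commute with satisfaction. We handle it by an explicit induction on the complexity of formulas with parameters.

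A fallback is available if one wants to avoid this for the direction from $\mathrm{Pbl}_{\mathcal{FO}}(\psi,\bot)$: that direction could be done syntactically. A refutation of $\psi$ relativizes to a proof of $\neg\psi^{\pi}$ within $\hat{\mathcal{L}}$, and $\neg\psi^{\pi}$ makes $\theta^{\pi}(y)$ and $\theta^{\pi'}(z)$ both refutable. Hence $\tilde\theta$ is derivable from $T_Z$ without any use of WKL. Only the converse direction genuinely needs completeness.
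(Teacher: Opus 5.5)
Your chain of equivalences is the paper's: completeness on both sides, plus Lemma~\ref{unsbisim}, with bisimulation invariance linked to $\mathrm{Pbl}_{\mathcal{FO}(\hat{\mathcal{L}})}(T_Z,\tilde\theta)$ via Proposition~\ref{pp}. (The paper gets that link from the hybrid and syntactic forms of the characterization theorem, and the syntactic form is itself derived from Proposition~\ref{pp}.) Two steps do not hold up.

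The serious one is that the direction from $\mathrm{Pbl}_{\mathcal{FO}}(\psi,\bot)$ to $\mathrm{Pbl}_{\mathcal{FO}(\hat{\mathcal{L}})}(T_Z,\tilde\theta)$ is false as stated. You inherit this from the half ``$\psi$ unsatisfiable $\Rightarrow$ $\theta$ bisimulation invariant'' of Lemma~\ref{unsbisim}, which the paper's proof uses in the same way. In a model with no $P_k$-points, $\psi^{P_k}$ is evaluated as $\psi$ in the empty structure, so it can hold even though $\psi$ is refutable. Take $\psi\equiv\forall x\bot$: then $\mathrm{Pbl}_{\mathcal{FO}}(\psi,\bot)$ holds, and $\psi^{P_k}\equiv\forall y(P_k(y)\rightarrow\bot)$. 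Write $D,D'$ for $D_{\mathcal{L}_{\mathcal{FO}}},D_{\mathcal{L}^{\prime}_{\mathcal{FO}}}$. Consider the finite $\hat{\mathcal{L}}$-structure with $D=\{a\}$, $D'=\{b_0,b_1\}$, $r=\{(a,a)\}$, $r'=\{(b_0,b_1),(b_1,b_0)\}$, $Z=\{(a,b_0),(a,b_1)\}$ and all unary predicates empty. It is a model of $T_Z$ in which $\theta^{\pi}(a)$ holds and $\theta^{\pi'}(b_0)$ fails, so by soundness $\mathrm{Pbl}_{\mathcal{FO}(\hat{\mathcal{L}})}(T_Z,\tilde\theta)$ fails. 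Your syntactic fallback breaks at exactly this point. $\theta^{\pi}$ contains $(\psi^{P_k})^{\pi}$, a relativization to $D\land P_k$, and relativizing a refutation of $\psi$ to $D\land P_k$ only yields $\exists u(D(u)\land P_k(u))\rightarrow\neg(\psi^{P_k})^{\pi}$. Neither $T_Z$ nor $\theta^{\pi}$ supplies that premise. The fix is to add $\exists y\,P_k(y)$ as a conjunct of $\theta$. Your fallback then proves this direction outright, and the construction of Lemma~\ref{unsbisim} still shows non-invariance when $\psi$ is satisfiable.

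The second problem is your argument that $T_Z\vDash\tilde\theta$ implies bisimulation invariance. This is the direction of Proposition~\ref{pp} that the paper leaves unproved, and your justification of it does not work. The $\hat{\mathcal{L}}$-structure assembled from $(M,N,Z)$ is not a disjoint union in the sense used in Lemma~\ref{bisimimlloc}: $Z$ links the two components. So the truth of a sentence such as $\exists z\,Z(\overline{m},z)$ is not determined by the valuations of $M$ and $N$. In general the full valuation you need does not exist in $\mathrm{WKL}_0$. On the frame $(\mathbb{N},\emptyset)$ with all atoms false, every nonempty $Z$ is a bisimulation. For $Z=\{(f(m),m)\mid m\in\mathbb{N}\}$ the valuation would decide membership in the range of the injection $f$, giving $\mathrm{ACA}_0$ by Proposition~\ref{acaeqijf}.

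You only use this implication for the direction from $\mathrm{Pbl}_{\mathcal{FO}(\hat{\mathcal{L}})}(T_Z,\tilde\theta)$ to $\mathrm{Pbl}_{\mathcal{FO}}(\psi,\bot)$, and there the general claim is unnecessary. Argue contrapositively. If $\psi$ is consistent, completeness gives a model. The configuration of Lemma~\ref{unsbisim} then assembles into a model of $T_Z\cup\{\neg\tilde\theta\}$: take $M^{\star}\sqcup M_A$ against $M^{\star}\sqcup M_B$, with $Z$ the identity on $M^{\star}$ together with $(a,b_0)$ and $(a,b_1)$. Since this $Z$ is definable, every $\hat{\mathcal{L}}$-sentence translates effectively into an $\mathcal{L}_{\mathcal{FO}}$-sentence about $M^{\star}$. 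So the valuation exists in $\mathrm{RCA}_0$, and soundness gives $\neg\mathrm{Pbl}_{\mathcal{FO}(\hat{\mathcal{L}})}(T_Z,\tilde\theta)$. The opposite restriction step, from a model of $T_Z$ to a bisimilar pair, is fine as you describe it.
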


\begin{proof}
By G\"odel's Completeness Theorem for $\mathcal{FO}$ (that is, $\mathrm{WKL}_0$), $\psi$ is unsatisfiable if and only if $\mathrm{Pbl}_{\mathcal{FO}}(\psi, \bot)$ holds. By Corollary~\ref{vanB} and Corollary~\ref{vanB2} (that is, $\mathrm{WKL}_0$), $\theta(x)$ is bisimulation invariant if and only if $\mathrm{Pbl}_{\mathcal{FO}(\hat{\mathcal{L}})}(T_{Z}, \tilde{\theta})$ holds. Then, by Lemma~\ref{unsbisim}, $\psi$ is unsatisfiable if and only if $\theta(x)$ is bisimulation invariant. Thus, $\mathrm{Pbl}_{\mathcal{FO}(\hat{\mathcal{L}})}(T_{Z}, \tilde{\theta})$ holds if and only if $\mathrm{Pbl}_{\mathcal{FO}}(\psi, \bot)$ holds.
\end{proof}

\begin{cor}\label{plrarrowpra}
Lemma~\ref{plrarrow} is also provable in $\mathrm{PRA}$.
\end{cor}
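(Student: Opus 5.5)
The plan mirrors the proof of the PRA version of the syntactic form: check that the statement of Lemma~\ref{plrarrow} is (equivalent to) a $\Pi^0_2$ sentence, and then use the fact that $\mathrm{WKL}_0$ is a $\Pi^0_2$-conservative extension of $\mathrm{PRA}$.

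First I fix the logical form. For each $k$ and each $\psi\in\mathrm{Snt}_{\mathcal{FO}}$ using only $P_0,\dots,P_{k-1}$ and $r$, the maps $\psi\mapsto\theta$ and $\theta\mapsto\tilde\theta$ are primitive recursive syntactic operations. The relativization $(\cdot)^{P_k}$, the conjunction with $r(x,x)\land\neg P_k(x)$, and the translations $\pi,\pi'$ defining $\tilde\theta$ are all defined by recursion on formulas. The predicate ``$\psi$ contains only $P_0,\dots,P_{k-1}$ and $r$'' is primitive recursive. The set $T_Z$ is primitive recursive as well: it consists of finitely many fixed sentences plus one schema instance for each $i$. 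Hence both $\mathrm{Pbl}_{\mathcal{FO}(\hat{\mathcal{L}})}(T_Z,\tilde\theta)$ and $\mathrm{Pbl}_{\mathcal{FO}}(\psi,\bot)$ are $\Sigma^0_1$ formulas in the parameters $k,\psi$, each being an existential quantifier over a finite proof (and a finite list of members of $T_Z$) followed by a primitive recursive check.

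Lemma~\ref{plrarrow} is therefore the universal closure
\begin{equation*}
\forall k\,\forall\psi\,\bigl(\mathrm{Snt}_k(\psi)\rightarrow(\exists p\,A(k,\psi,p)\leftrightarrow\exists p\,B(\psi,p))\bigr),
\end{equation*}
with $A$ and $B$ primitive recursive. This is a conjunction of two sentences of the form $\forall(\exists\rightarrow\exists)$, i.e. $\forall\forall\exists$. So the whole statement is a $\Pi^0_2$ sentence in the first-order language of arithmetic, with no set parameters. Since $\mathrm{WKL}_0$ proves it by Lemma~\ref{plrarrow}, $\Pi^0_2$-conservativity yields a proof in $\mathrm{PRA}$, where each direction becomes a primitive recursive function transforming proofs.

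The main thing to be careful about is the formalization in the language of $\mathrm{PRA}$. The $\mathrm{WKL}_0$ proof uses $T_Z$ as a set parameter and passes through semantic notions such as satisfiability and bisimulation invariance. Those second-order notions disappear from the statement itself, however: I will replace $T_Z$ by its primitive recursive (indeed $\Delta^0_0$-definable) membership predicate, and check that the formula $\mathrm{Pbl}_{\mathcal{FO}(\hat{\mathcal{L}})}(T_Z,\cdot)$ used in $\mathrm{WKL}_0$ is provably equivalent in $\mathrm{RCA}_0$ to that arithmetical $\Sigma^0_1$ formula. Once this is checked, the conservativity step is immediate, exactly as in the proof that the syntactic form is provable in $\mathrm{PRA}$.
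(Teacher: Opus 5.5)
Your proposal is correct and is the paper's proof: the paper also just observes that Lemma~\ref{plrarrow} is expressed by a $\Pi^0_2$ sentence and applies the $\Pi^0_2$-conservativity of $\mathrm{WKL}_0$ over $\mathrm{PRA}$. Your explicit checks fill in details the paper leaves implicit: that $T_Z$ and the maps $\psi\mapsto\theta\mapsto\tilde\theta$ are primitive recursive, that each side of the biconditional is therefore $\Sigma^0_1$, and that the universal closure is a $\Pi^0_2$ sentence without set parameters.
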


\begin{proof}
A $\Pi^0_2$ sentence expresses Lemma~\ref{plrarrow}. Thus, by the fact that $\mathrm{WKL}_0$ is a $\Pi^0_2$-conservative extension of $\mathrm{PRA}$, Lemma~\ref{plrarrow} can also be proved in  $\mathrm{PRA}$.
\end{proof}

\begin{prop}\label{vanBequivWC}
The following statements are equivalent over $\mathrm{RCA}_0$:
\begin{enumerate}
    \item The van Benthem characterization theorem (the hybrid form),
    \item Weak Completeness Theorem for $\mathcal{FO}$.
\end{enumerate}
\end{prop}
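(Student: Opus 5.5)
We prove the two implications separately over $\mathrm{RCA}_0$. Throughout, the bridge between the two statements is the formula $\theta(x)\equiv\psi^{P_{k}}\land r(x,x)\land\neg P_{k}(x)$ from Lemma~\ref{unsbisim}, together with the fact that the semantic form (Theorem~\ref{vanB}) is already available in $\mathrm{RCA}_0$.

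\emph{(2) $\Rightarrow$ (1).} By Proposition~\ref{2im1vB1}, direction $(2)\Rightarrow(1)$ of the hybrid form holds in $\mathrm{RCA}_0$. For the converse, let $\varphi(x)$ be bisimulation invariant. By Theorem~\ref{vanB}, working in $\mathrm{RCA}_0$, there is $\psi\in\mathrm{Fml}$ with $\vDash\forall x(\varphi(x)\leftrightarrow\mathrm{ST}_{x}(\psi))$. This is a single sentence. Apply the Weak Completeness Theorem with $T=\emptyset$, which is consistent by soundness, to obtain $\mathrm{Pbl}_{\mathcal{FO}}(\emptyset,\forall x(\varphi(x)\leftrightarrow\mathrm{ST}_{x}(\psi)))$. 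This direction is routine.

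\emph{(1) $\Rightarrow$ (2).} Assume the hybrid form. Let $T$ be a finite consistent set and $T\vDash\chi$; we must show $\mathrm{Pbl}_{\mathcal{FO}}(T,\chi)$. Set $\psi\equiv\bigwedge T\land\neg\chi$, so $\psi$ is unsatisfiable. Since any finite language can be renamed into $P_0,\dots,P_{k-1},r$, we may assume $\psi$ uses only these; one checks that such relabeling is harmless in $\mathrm{RCA}_0$. It then suffices to show that unsatisfiability of a single sentence $\psi$ implies $\mathrm{Pbl}_{\mathcal{FO}}(\psi,\bot)$, and from this recover $\mathrm{Pbl}_{\mathcal{FO}}(T,\chi)$ by the deduction theorem.

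For the key step, Lemma~\ref{unsbisim} (in $\mathrm{RCA}_0$) gives that $\theta(x)$ is bisimulation invariant. The hybrid form then yields a modal $\psi'$ with $\mathrm{Pbl}_{\mathcal{FO}}(\emptyset,\forall x(\theta(x)\leftrightarrow\mathrm{ST}_x(\psi')))$. We then need to extract a proof of $\neg\psi$, syntactically and without completeness. The idea is to exploit the two concrete finite models $M_A$ (a reflexive $\neg P_k$ point) and $M_B$ (a $\neg P_k$ two-cycle), whose disjoint unions with anything are bisimilar at $a$ and $b_0$. Since $\theta(a)$ entails $r(a,a)\land\neg P_k(a)$, and in $M_B$-type configurations $r(b_0,b_0)$ fails, we get $\vdash\neg\theta(b_0)$ there, hence $\neg\mathrm{ST}_{b_0}(\psi')$. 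The modal theory at $a$ and at $b_0$ is then identical; concretely, the truth value of $\mathrm{ST}(\psi')$ at such points is a fixed finite computation, call it $\epsilon\in\{0,1\}$.

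To turn this into a proof, introduce a fresh constant $c$ together with finitely many axioms $\Delta_A$ describing $c$ as a reflexive, isolated $\neg P_k$ point. Inside first-order logic, $\Delta_A$ decides $\mathrm{ST}_c(\psi')$ by induction on $\psi'$; this is a primitive recursive syntactic derivation, formalizable in $\mathrm{RCA}_0$. Doing the same with $\Delta_B$ for $b_0,b_1$ gives $\vdash\Delta_B\to\neg\mathrm{ST}_{b_0}(\psi')$. By bisimilarity, the two derived values coincide, so $\vdash\Delta_A\to\neg\mathrm{ST}_c(\psi')$, hence $\vdash\Delta_A\to\neg\theta(c)$. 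Since $\Delta_A$ implies $r(c,c)\land\neg P_k(c)$ and $c$ is isolated from the $P_k$-part, we get $\vdash\Delta_A\to\neg\psi^{P_k}$. Finally, eliminating the constant and relativization, which is syntactic and uses that $P_k$ has a witness, yields $\vdash\neg\psi$. This last derivation is the main obstacle: we must check carefully, in $\mathrm{RCA}_0$, that the relativized sentence together with the isolated-point axioms proves $\neg\psi$ by a purely syntactic relativization lemma, which says that $\psi^{P_k}$ with $\exists y P_k(y)$ is interderivable with $\psi$ on the $P_k$-part. The fallback is to combine the syntactic form (Corollary~\ref{plrarrowpra}) with this relativization lemma.
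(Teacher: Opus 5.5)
Your direction $(2)\Rightarrow(1)$ is the paper's argument. The gap is in the last step of your main route for $(1)\Rightarrow(2)$. Your derivation of $\vdash\Delta_A(c)\rightarrow\neg\psi^{P_k}$ is fine, provided $\Delta_A$ and $\Delta_B$ fix the same values for all atoms of $\psi'$. After substituting suitable formulas for $r$ and the $P_i$ with $i\neq k$, it amounts to $\vdash\exists y\,\neg P_k(y)\rightarrow\neg\psi^{P_k}$. Getting from there to $\vdash\neg\psi$ is not a relativization lemma, though. Relativization gives you: from $\vdash\chi$ infer $\vdash\exists y\,P_k(y)\rightarrow\chi^{P_k}$. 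Substituting $y=y$ for $P_k(y)$ gives the converse. Neither discharges a hypothesis asserting a point \emph{outside} $P_k$: under $P_k(y)\mapsto(y=y)$ your hypothesis $\Delta_A(c)$ becomes refutable, and the implication says nothing. What you actually need is the conservativity statement ``if $\psi$ is consistent then so is $\psi^{P_k}\land\exists y\,\neg P_k(y)$''. Its obvious proof, adding a point to a model of $\psi$, uses exactly the completeness you are trying to derive. It cannot come from substituting formulas for the predicate symbols alone: for $\psi\equiv\exists x\forall y\,(y=x)$, a model of $\psi$ has no room for both a nonempty $P_k$-part and a point outside it. A genuinely syntactic proof would need something like a two-dimensional interpretation (diagonal pairs as the $P_k$-part, all off-diagonal pairs identified into one outside point) plus a separate finite check of one-element models. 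None of this is in your proposal, so as written the main route does not reach $\mathrm{Pbl}_{\mathcal{FO}}(\psi,\bot)$.

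Your fallback is the paper's route, and it makes the whole $\Delta_A/\Delta_B$ construction, and any relativization lemma, unnecessary. For all $\psi,\psi'$, the implication $\mathrm{Pbl}_{\mathcal{FO}}(\emptyset,\forall x(\theta(x)\leftrightarrow\mathrm{ST}_x(\psi')))\rightarrow\mathrm{Pbl}_{\mathcal{FO}}(\psi,\bot)$ is a $\Pi^0_2$ statement provable in $\mathrm{WKL}_0$. Indeed, Proposition~\ref{2im1vB1} makes $\theta$ bisimulation invariant, Lemma~\ref{unsbisim} then makes $\psi$ unsatisfiable, and completeness yields the proof. So by $\Pi^0_2$-conservativity it holds in $\mathrm{PRA}$ and hence in $\mathrm{RCA}_0$. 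This is what Corollary~\ref{plrarrowpra} (the $\mathrm{PRA}$ version of Lemma~\ref{plrarrow}---not the syntactic form) combined with the syntactic form provides. The paper applies it in model-existence form. Consistency of $\psi$ gives $\neg\mathrm{Pbl}_{\mathcal{FO}(\hat{\mathcal{L}})}(T_Z,\tilde\theta)$, hence no modal $\psi'$ with a provable biconditional. By the hybrid form, $\theta$ is then not bisimulation invariant, hence satisfiable, and restricting a model of $\theta$ to its $P_k$-part gives a model of $\psi$. Your contrapositive version is equally valid once this transfer replaces the hand-made syntactic extraction.
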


\begin{proof}
$(2)\Rightarrow(1)$: By Theorem~\ref{vanB}.
\vspace{1mm}

$(1)\Rightarrow(2)$: Let $\psi\in\mathrm{Snt}_{\mathcal{FO}}$ and let $T\subseteq\mathrm{Snt}_{\mathcal{FO}}$ be a finite set. Suppose that $T\cup\{\psi\}$ contains only $P_{0}(y), \dots, P_{k-1}(y)$, and $r(y, y^{\prime})$ as first-order predicates. We assume that $\neg\mathrm{Pbl}_{\mathcal{FO}}(T\cup\{\neg\psi\}, \bot)$ holds. Let $\phi\equiv\bigwedge T\land\neg\psi$ and $\theta(x)\equiv\phi^{P_{k}}\land r(x, x)\land\neg P_{k}(x)$. Then, by Corollary~\ref{plrarrowpra}, $\neg\mathrm{Pbl}_{\mathcal{FO}(\hat{\mathcal{L}})}(T_{Z}, \tilde{\theta})$ holds. Thus, by the van Benthem characterization theorem (the hybrid form), $\theta(x)$ is not bisimulation invariant. Thus, there exists a model $M=(W, R, V)$ on $\mathcal{L}_{\mathcal{FO}}$ and $w\in W$ such that $M\vDash\theta(w)$, that is, $M\vDash\phi^{P_{k}}$. We define $M^{\prime}=(W^{\prime}, R^{\prime}, V^{\prime})$ as follows:
\begin{itemize}
    \item $W^{\prime}=\{w\in W\mid P_{k}(w)\}$,
    \item $R^{\prime}=R\cap(W^{\prime}\times W^{\prime})$,
    \item $V^{\prime}(\varphi(\vec{a}))=V(\varphi^{P_{k}}(\vec{a}))$ for all $\varphi(\vec{x})\in\mathrm{Fml}_{\mathcal{FO}}$, and $\vec{a}\in W^{\prime}$.
\end{itemize}

Then $M^{\prime}\vDash\phi$. Thus,  $\bigwedge T\land\neg\psi$ has a model on $\mathcal{L}_{\mathcal{FO}}$.
\end{proof}

Next, we formulate Robinson arithmetic $\mathrm{Q}$ in the language $\mathcal{L}_{\mathcal{FO}}$. $\mathrm{Q}$ consists of a constant symbol $0$, an unary function symbol $S$, and binary function symbols $+,\,\cdot$. First, we formulate these symbols as relations in $\mathcal{L}_{\mathcal{FO}}$. Specifically, we adopt an event-based approach to these relations, where each arithmetic operation is associated with an explicit event variable and its arguments.
\vspace{2mm}

We introduce unary predicates $P_{\star}, P_{0}, P_{1}, \dots, P_{8}$. 
\vspace{2mm}

$(\mathrm{A})$: $P_{\star}(x)$ denotes that $x$ is a natural number.
\vspace{2mm}

$(\mathrm{B})$: The predicate $\mathrm{Succ}(x, y)$ denotes that $y=S(x)$: 
\begin{align*}
\mathrm{Succ}(x, y)&\equiv P_{\star}(x) \land P_{\star}(y) \land r(x, y).
\end{align*}

\begin{center}
\begin{tikzpicture}[
    scale=0.8, 
    transform shape,
    >=Stealth,
    thick,
    circle_node/.style={
        draw, 
        circle, 
        minimum size=8mm, 
        inner sep=0pt,
        fill=white, 
        font=\large\bfseries
    },
    box_node/.style={draw, rectangle, rounded corners=2pt, minimum height=6mm, inner sep=4pt},
    oval_node/.style={draw, ellipse, minimum width=2.5cm, minimum height=1cm}
]

    % --- 座標の定義 ---
    \coordinate (P0) at (0,1.5); 
    \coordinate (P1) at (3.2,1.5); 
    \coordinate (P2) at (6.4,1.5); 
    \coordinate (P3) at (9.6,1.5); 
    \coordinate (P4) at (11.6,1.5);

    % --- 1. 上部のタイムライン（数直線）と番号 ---
    \node[circle_node] (N0) at (P0) {0};
    \node[circle_node] (N1) at (P1) {1};
    \node[circle_node] (N2) at (P2) {2};
    \node[circle_node] (N3) at (P3) {3};

    \begin{scope}[on background layer]
        \draw[->, line width=1.5pt] (N0) -- node[above, midway, font=\Large] {$r$} (N1);
        \draw[->, line width=1.5pt] (N1) -- node[above, midway, font=\Large] {$r$} (N2);
        \draw[->, line width=1.5pt] (N2) -- node[above, midway, font=\Large] {$r$} (N3);
        \draw[line width=1.5pt] (N3) -- (P4) node[right, font=\large\bfseries, color=black] {\dots};
    \end{scope}

    % --- 注釈の追加 ---
    \node[below=0.2cm of N2, font=\small\itshape, text depth=0pt] {Note: Arrows between 0, 1, 2, 3, \dots\ represent successor relations.};

\end{tikzpicture}
\end{center}

$(\mathrm{C})$: The predicate $\mathrm{Add}(x, y, z, w, u_{1}, u_{2}, u_{3})$ denotes that $x+y=z$, where $w$ is the addition event, and $u_1, u_2, u_3$ represent that $x$, $y$, and $z$ are the first argument, the second argument, and the result of this event, respectively.
\begin{align*}
\mathrm{Add}(x, y, z, w, u_{1}, u_{2}, u_{3})\equiv P_{\star}(x)\land P_{\star}(y)\land P_{\star}(z)\land \Bigl(P_{1}(w)&\land(r(w, u_1) \land P_2(u_1) \land r(u_1, x))\\&\land(r(w, u_2) \land P_3(u_2) \land r(u_2, y))\\&\land(r(w, u_3) \land P_4(u_3) \land r(u_3, z))\Bigr).
\end{align*}

\begin{center}
\begin{tikzpicture}[
    scale=0.8, 
    transform shape,
    >=Stealth,
    thick,
    circle_node/.style={
        draw, 
        circle, 
        minimum size=8mm, 
        inner sep=0pt,
        fill=white, 
        font=\large\bfseries
    },
    box_node/.style={draw, rectangle, rounded corners=2pt, minimum height=6mm, inner sep=4pt},
    oval_node/.style={draw, ellipse, minimum width=2.5cm, minimum height=1cm}
]

    % --- 座標の定義 ---
    \coordinate (P0) at (0,1.5); 
    \coordinate (P1) at (3.2,1.5); 
    \coordinate (P2) at (6.4,1.5); 
    \coordinate (P3) at (9.6,1.5); 
    \coordinate (P4) at (11.6,1.5);

    % --- 1. 上部のタイムライン（数直線）と番号 ---
    \node[circle_node] (N0) at (P0) {0};
    \node[circle_node] (N1) at (P1) {1};
    \node[circle_node] (N2) at (P2) {2};
    \node[circle_node] (N3) at (P3) {3};

    \begin{scope}[on background layer]
        \draw[->, line width=1.5pt] (N0) -- node[above, midway, font=\Large] {$r$} (N1);
        \draw[->, line width=1.5pt] (N1) -- node[above, midway, font=\Large] {$r$} (N2);
        \draw[->, line width=1.5pt] (N2) -- node[above, midway, font=\Large] {$r$} (N3);
        \draw[line width=1.5pt] (N3) -- (P4) node[right, font=\large\bfseries, color=black] {\dots};
    \end{scope}

    % --- 2. 中段の引数・結果のボックス ---
    \node[box_node] (B1) at (3.2,0) {\large $u_1 : x+2=3$};
    \node[box_node] (B2) at (6.4,0) {\large $u_2 : 1+y=3$};
    \node[box_node] (B3) at (9.6,0) {\large $u_3 : 1+2=z$};

    % --- 3. 下部の数式オーバル ---
    \node[oval_node] (Eq) at (6.4,-1.8) {\Large $w : 1 + 2 $};

    % --- 4. 矢印の接続 ---
    \draw[->, line width=1.2pt] (B1.north) -- node[left, midway, font=\Large] {$r$} (N1.south);
    \draw[->, line width=1.2pt] (B2.north) -- node[left, midway, font=\Large] {$r$} (N2.south);
    \draw[->, line width=1.2pt] (B3.north) -- node[left, midway, font=\Large] {$r$} (N3.south);

    \draw[->, line width=1.2pt] (Eq.north) -- node[above left, midway, font=\Large] {$r$} (B1.south);
    \draw[->, line width=1.2pt] (Eq.north) -- node[left, midway, font=\Large] {$r$} (B2.south);
    \draw[->, line width=1.2pt] (Eq.north) -- node[above right, midway, font=\Large] {$r$} (B3.south);

    % --- 注釈の追加 ---
    \node[below=0.2cm of Eq, font=\small\itshape] {Note: This diagram illustrates the addition $1 + 2$.};

\end{tikzpicture}
\end{center}

$(\mathrm{D})$: The predicate $\mathrm{Mult}(x, y, z, w, u_{1}, u_{2}, u_{3})$ denotes that $x\cdot y=z$, where $w$ is the multiplication event, and $u_1, u_2, u_3$ represent that $x$, $y$, and $z$ are the first argument, the second argument, and the result of this event, respectively.
\begin{align*}
\mathrm{Mult}(x, y, z,w, u_{1}, u_{2}, u_{3})\equiv P_{\star}(x)\land P_{\star}(y)\land P_{\star}(z)\land\Bigl(P_{5}(w)&\land(r(w, u_1) \land P_6(u_1) \land r(u_1, x))\\&\land(r(w, u_2) \land P_7(u_2) \land r(u_2, y))\\&\land(r(w, u_3) \land P_8(u_3) \land r(u_3, z))\Bigr).
\end{align*}

\begin{center}
\begin{tikzpicture}[
    scale=0.8, 
    transform shape,
    >=Stealth,
    thick,
    circle_node/.style={
        draw, 
        circle, 
        minimum size=8mm, 
        inner sep=0pt,
        fill=white, 
        font=\large\bfseries
    },
    box_node/.style={draw, rectangle, rounded corners=2pt, minimum height=6mm, inner sep=4pt},
    oval_node/.style={draw, ellipse, minimum width=2.5cm, minimum height=1cm}
]

    % --- 座標の定義 ---
    \coordinate (P0) at (0,1.5); 
    \coordinate (P1) at (3.2,1.5); 
    \coordinate (P2) at (6.4,1.5); 
    \coordinate (P3) at (9.6,1.5); 
    \coordinate (P4) at (11.6,1.5);

    % --- 1. 上部のタイムライン（数直線）と番号 ---
    \node[circle_node] (N0) at (P0) {0};
    \node[circle_node] (N1) at (P1) {1};
    \node[circle_node] (N2) at (P2) {2};
    \node[circle_node] (N3) at (P3) {3};

    \begin{scope}[on background layer]
        \draw[->, line width=1.5pt] (N0) -- node[above, midway, font=\Large] {$r$} (N1);
        \draw[->, line width=1.5pt] (N1) -- node[above, midway, font=\Large] {$r$} (N2);
        \draw[->, line width=1.5pt] (N2) -- node[above, midway, font=\Large] {$r$} (N3);
        \draw[line width=1.5pt] (N3) -- (P4) node[right, font=\large\bfseries, color=black] {\dots};
    \end{scope}

    % --- 2. 中段の引数・結果のボックス ---
    \node[box_node] (B1) at (3.2,0) {\large $u_1 : x\times2=2$};
    \node[box_node] (B2) at (6.4,0) {\large $u_2 : 1\times y=2$};
    \node[box_node] (B3) at (9.6,0) {\large $u_3 : 1\times2=z$};

    % --- 3. 下部の数式オーバル ---
    \node[oval_node] (Eq) at (6.4,-1.8) {\Large $w : 1 \times 2 $};

    % --- 4. 矢印の接続 ---
    \draw[->, line width=1.2pt] (B1.north) -- node[left, midway, font=\Large] {$r$} (N1.south);
    \draw[->, line width=1.2pt] (B2.north) -- node[left, midway, font=\Large] {$r$} (N2.south);
    \draw[->, line width=1.2pt] (B3.north) -- node[above left, midway, font=\Large] {$r$} (N2.south); 

    \draw[->, line width=1.2pt] (Eq.north) -- node[above left, midway, font=\Large] {$r$} (B1.south);
    \draw[->, line width=1.2pt] (Eq.north) -- node[left, midway, font=\Large] {$r$} (B2.south);
    \draw[->, line width=1.2pt] (Eq.north) -- node[above right, midway, font=\Large] {$r$} (B3.south);

    % --- 注釈の追加 ---
    \node[below=0.2cm of Eq, font=\small\itshape] {Note: This diagram illustrates the multiplication $1 \times 2$.};

\end{tikzpicture}
\end{center}

Then, we define the following axioms with the above predicates.
\vspace{2mm}

$(0)$: Each unary predicate $P_i$ is mutually exclusive with the others.
\begin{align*}
\forall x \ \neg(P_i(x) \land P_j(x)) \quad (i, j \in\{\star,1,\dots,8\}, \ i \neq j).
\end{align*}

$(1)$: There exists exactly one natural number zero.
\begin{align*}
\exists x (P_{\star}(x) \land P_0(x)),
\end{align*}
\begin{align*}
\forall x\forall y (P_{\star}(x) \land P_0(x)\land P_{\star}(y) \land P_0(y)\rightarrow x=y).
\end{align*}

$(2)$: $\mathrm{Succ}(x, y)$ defines a function on the natural numbers.
\begin{align*}
\forall x\exists y(P_{\star}(x)\rightarrow\mathrm{Succ}(x, y)),
\end{align*}
\begin{align*}
\forall x\forall y\forall y^{\prime}(\mathrm{Succ}(x, y)\land\mathrm{Succ}(x, y^{\prime})\rightarrow y=y^{\prime}).
\end{align*}

$(3)$: The addition event $w$ is connected to a node satisfying $P_i$.
\begin{align*}
\forall w \exists u_1 \Big( P_1(w) \to (P_i(u_1) \land r(w, u_1)) \Big)\,(i\in\{2, 3, 4\}).
\end{align*}

$(4)$: There is at most one node satisfying $P_i$ that is connected to the addition event $w$.
\begin{align*}
\forall w \forall u_1 \forall v_1 \Big( (P_1(w) \land P_i(u_1) \land r(w, u_1) \land P_i(v_1) \land r(w, v_1)) \to u_1 = v_1 \Big)\,(i\in\{2, 3, 4\}).
\end{align*}

$(5)$: Nodes satisfying $P_i$ are connected via $r$ to a node representing a natural number.
\begin{align*}
\forall w \forall u_1 \exists u_2 \Big( (P_1(w) \land P_i(u_1) \land r(w, u_1)) \to (P_{\star}(u_2) \land r(u_1, u_2)) \Big)\,(i\in\{2, 3, 4\}).
\end{align*}

$(6)$: If a node satisfying $P_i$ is connected via $r$ to a natural number, then such a natural number is unique.
\begin{align*}
\forall w \forall u_1 \forall u_2 \forall v_2 \Big( (P_1(w) \land P_i(u_1) \land r(w, u_1) \land P_{\star}(u_2) \land r(u_1, u_2) \land P_{\star}(v_2) \land r(u_1, v_2)) \to u_2 = v_2 \Big)\,(i\in\{2, 3, 4\}).
\end{align*}

$(7)$: For any natural numbers $x$ and $y$, there exist an addition event $w$ and nodes $u_1$, $v_1$ such that $u_1$ and $v_1$ take $x$ and $y$ as their first and second arguments, respectively.
\begin{align*}
\forall x \forall y \exists w \exists u_1 \exists v_1 \Bigg( (P_{\star}(x) \land P_{\star}(y)) \to \Big( P_1(w) \land (P_2(u_1) \land r(w, u_1) \land r(u_1, x)) \land (P_3(v_1) \land r(w, v_1) \land r(v_1, y)) \Big) \Bigg).
\end{align*}

$(8)$: The addition event $w$ for given inputs $x, y$ is unique.
\begin{align*}
\forall x \forall y \forall w \forall w' \forall u_1 \forall v_1 \forall u'_1 \forall v'_1 \Bigg(
& \Big[ P_1(w) \land (P_2(u_1) \land r(w, u_1) \land r(u_1, x)) \land (P_3(v_1) \land r(w, v_1) \land r(v_1, y)) \Big] \\
& \land \\
& \Big[ P_1(w') \land (P_2(u'_1) \land r(w', u'_1) \land r(u'_1, x)) \land (P_3(v'_1) \land r(w', v'_1) \land r(v'_1, y)) \Big] \\
& \to w = w'\Bigg).
\end{align*}

$(9)$: For any natural numbers $x$ and $y$, there exist a sum $z$ and nodes $w, u_1, u_2,$ and $u_3$ that connects them.
\begin{align*}
\forall x \forall y \exists z \exists w \exists u_1 \exists u_2 \exists u_3 \Big( (P_{\star}(x) \land P_{\star}(y)) \to \mathrm{Add}(x, y, z, w, u_1, u_2, u_3) \Big).
\end{align*}

$(10)$: If $z$ and $z'$ are two results of the computation of $x+y$, they must coincide.
\begin{align*}
\forall x \forall y \forall z \forall z' \forall w \forall w' \forall u_1 \forall u_1' \forall u_2 \forall u_2' \forall u_3 \forall u_3' \Bigg(
& \Big[ \mathrm{Add}(x, y, z, w, u_1, u_2, u_3) \land \mathrm{Add}(x, y, z', w', u_1', u_2', u_3' ) \Big] \\
& \to z = z'
\Bigg).
\end{align*}

$(11)$: The multiplication event $w$ is connected to a node satisfying $P_i$.
\begin{align*}
\forall w \exists u_1 \Big( P_5(w) \to (P_i(u_1) \land r(w, u_1)) \Big)\,(i\in\{6, 7, 8\}).
\end{align*}

$(12)$: There is at most one node satisfying $P_i$ that is connected to the multiplication event $w$.
\begin{align*}
\forall w \forall u_1 \forall v_1 \Big( (P_5(w) \land P_i(u_1) \land r(w, u_1) \land P_i(v_1) \land r(w, v_1)) \to u_1 = v_1 \Big)\,(i\in\{6, 7, 8\}).
\end{align*}

$(13)$: Nodes satisfying $P_i$ are connected via $r$ to a node representing a natural number.
\begin{align*}
\forall w \forall u_1 \exists u_2 \Big( (P_5(w) \land P_i(u_1) \land r(w, u_1)) \to (P_{\star}(u_2) \land r(u_1, u_2)) \Big)\,(i\in\{6, 7, 8\}).
\end{align*}

$(14)$: If a node satisfying $P_i$ is connected via $r$ to a natural number, then such a natural number is unique.
\begin{align*}
\forall w \forall u_1 \forall u_2 \forall v_2 \Big( (P_5(w) \land P_i(u_1) \land r(w, u_1) \land P_{\star}(u_2) \land r(u_1, u_2) \land P_{\star}(v_2) \land r(u_1, v_2)) \to u_2 = v_2 \Big)\,(i\in\{6, 7, 8\}).
\end{align*}

$(15)$: For any natural numbers $x$ and $y$, there exists at least one set consisting of an addition event $w$ and $P_6$-node $u_1$, and  $P_7$-node $v_1$ connected to $w$, such that $u_1$ and $v_1$ take $x$ and $y$ as arguments.
\begin{align*}
\forall x \forall y \exists w \exists u_1 \exists v_1 \Bigg( (P_{\star}(x) \land P_{\star}(y)) \to \Big( P_5(w) \land (P_6(u_1) \land r(w, u_1) \land r(u_1, x)) \land (P_7(v_1) \land r(w, v_1) \land r(v_1, y)) \Big) \Bigg).
\end{align*}

$(16)$: The multiplication event $w$ for given inputs $x, y$ is unique.
\begin{align*}
\forall x \forall y \forall w \forall w' \forall u_1 \forall v_1 \forall u'_1 \forall v'_1 \Bigg(
& \Big[ P_5(w) \land (P_6(u_1) \land r(w, u_1) \land r(u_1, x)) \land (P_7(v_1) \land r(w, v_1) \land r(v_1, y)) \Big] \\
& \land \\
& \Big[ P_5(w') \land (P_6(u'_1) \land r(w', u'_1) \land r(u'_1, x)) \land (P_7(v'_1) \land r(w', v'_1) \land r(v'_1, y)) \Big] \\
& \to w = w'\Bigg).
\end{align*}

$(17)$: For any natural numbers $x$ and $y$, there exist a product $z$ and nodes $w, u_1, u_2,$ and $u_3$ that connects them.
\begin{align*}
\forall x \forall y \exists z \exists w \exists u_1 \exists u_2 \exists u_3 \Big( (P_{\star}(x) \land P_{\star}(y)) \to \mathrm{Mult}(x, y, z, w, u_1, u_2, u_3) \Big).
\end{align*}

$(18)$: If $z$ and $z'$ are two results of the computation of $x\cdot y$, they must coincide.
\begin{align*}
\forall x \forall y \forall z \forall z' \forall w \forall w' \forall u_1 \forall u_1' \forall u_2 \forall u_2' \forall u_3 \forall u_3' \Bigg(
& \Big[ \mathrm{Mult}(x, y, z, w, u_1, u_2, u_3) \land \mathrm{Mult}(x, y, z', w', u_1', u_2', u_3' ) \Big] \\
& \to z = z'
\Bigg).
\end{align*}

$(19)$: The zero is not the successor of any number $\bigl(\forall x(S(x) \neq 0)\bigr)$.
\begin{align*}
\forall x \forall y \big( \mathrm{Succ}(x, y) \to \neg P_0(y) \big).
\end{align*}

$(20)$: The successor function is injective. $\bigl(\forall x \forall y(S(x) = S(y) \to x = y)\bigr)$.
\begin{align*}
\forall x \forall y \forall z \big( (\mathrm{Succ}(x, z) \land \mathrm{Succ}(y, z)) \to x = y \big).
\end{align*}

$(21)$: Every number other than zero is a successor. $\bigl(\forall x(x \neq 0 \to \exists y(x = S(y)))\bigr)$.
\begin{align*}
\forall x \exists y\big( (P_\star(x) \land \neg P_0(x)) \to (P_\star(y) \land \mathrm{Succ}(y, x)) \big).
\end{align*}

$(22)$: The base case of the addition $\bigl(\forall x(x + 0 = x)\bigr)$.
\begin{align*}
\forall x \forall y \exists w \exists u_1 \exists u_2 \exists u_3\big( (P_\star(x) \land P_\star(y) \land P_0(y)) \to \mathrm{Add}(x, y, x, w, u_1, u_2, u_3) \big).
\end{align*}

$(23)$: The recursive definition of the addition $\bigl(\forall x \forall y(x + S(y) = S(x + y))\bigr)$.
\begin{align*}
\forall x \forall y \forall y' \forall z \forall z' \forall w \forall w' \forall u_1 \forall u_2 \forall u_3 \forall v_1 \forall v_2 \forall v_3 
\Bigg(
& \Big[ \mathrm{Succ}(y, y') \land\mathrm{Add}(x, y, z, w, u_1, u_2, u_3) \\
& \quad \land \mathrm{Add}(x, y', z', w', v_1, v_2, v_3) \Big] \\
& \to \mathrm{Succ}(z, z')
\Bigg).
\end{align*}

$(24)$: The base case of the multiplication $\bigl(\forall x(x \cdot 0 = 0)\bigr)$.
\begin{align*}
\forall x \forall y \exists w \exists u_1 \exists u_2 \exists u_3\big( (P_\star(x) \land P_\star(y) \land P_0(y)) \to \mathrm{Mult}(x, y, y, w, u_1, u_2, u_3) \big).
\end{align*}

$(25)$: The recursive definition of multiplication $\bigl(\forall x \forall y(x \cdot S(y) = (x \cdot y) + x)\bigr)$.
\begin{align*}
\forall x \forall y \forall y' \forall z \forall w \forall m_1 \forall u_1 \forall u_2 \forall u_3 \forall m_2 \forall v_1 \forall v_2 \forall v_3 \exists a \exists s_1 \exists s_2 \exists s_3 
\Bigg(
& \Big[ \mathrm{Succ}(y, y')\land \mathrm{Mult}(x, y, z, m_1, u_1, u_2, u_3) \\
& \quad \land \mathrm{Mult}(x, y', w, m_2, v_1, v_2, v_3) \Big] \\
& \to \mathrm{Add}(z, x, w, a, s_1, s_2, s_3)
\Bigg).
\end{align*}

We define $\mathrm{Q}_{K}$ as the set of formulas consisting of the axioms $(0)$ through $(25)$. $\mathrm{Q}_{K}$ is an interpretation of $\mathrm{Q}_{rel}$, which is a relational formulation of Robinson arithmetic $\mathrm{Q}$. 

\begin{lem}[$\mathrm{RCA}_0$]\label{QKcon}
$\mathrm{Q}_{K}$ is consistent, that is, $\neg\mathrm{Pbl}_{\mathcal{FO}}\bigl(\mathrm{Q}_{K}, \bot\bigr)$.
\end{lem}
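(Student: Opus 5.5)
The plan is to prove consistency via soundness. In $\mathrm{RCA}_0$ we will exhibit an explicit standard model $\mathcal{N}$ of $\mathrm{Q}_{K}$ on $\mathcal{L}_{\mathcal{FO}}$, together with a full valuation $V$ on all sentences. The Soundness Theorem for $\mathcal{FO}$ then gives the result: if $\mathrm{Pbl}_{\mathcal{FO}}(\mathrm{Q}_{K},\bot)$ held, there would be finitely many $\psi_1,\dots,\psi_n\in\mathrm{Q}_{K}$ with $\vDash\psi_1\land\dots\land\psi_n\rightarrow\bot$. Since $\mathcal{N}$ satisfies every $\psi_i$, it would satisfy $\bot$, which is impossible.

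The domain $M_\star$ will be the disjoint union of four kinds of points:
\begin{itemize}
\item the numbers $\langle 0,n\rangle$ for $n\in\mathbb{N}$, satisfying $P_\star$, with $P_0$ holding only at $\langle 0,0\rangle$;
\item addition events $\langle 1,x,y\rangle$, satisfying $P_1$;
\item argument and result nodes $\langle i,x,y\rangle$ for $i\in\{2,3,4\}$, satisfying $P_i$;
\item the analogous multiplication nodes $\langle i,x,y\rangle$ for $i\in\{5,\dots,8\}$.
\end{itemize}
The relation $r$ is defined as follows: $\langle0,n\rangle\, r\,\langle0,n+1\rangle$; each event $\langle1,x,y\rangle$ has $r$-edges to $\langle i,x,y\rangle$ for $i=2,3,4$; and $\langle2,x,y\rangle\,r\,\langle0,x\rangle$, $\langle3,x,y\rangle\,r\,\langle0,y\rangle$, $\langle4,x,y\rangle\,r\,\langle0,x+y\rangle$. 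Multiplication is treated the same way, with $x\cdot y$ as the result. All predicates $P_j$ for $j\notin\{\star,0,\dots,8\}$ are set false. Every atomic relation is then primitive recursive, so the atomic diagram exists by $\Delta^0_1$-comprehension.

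The main obstacle is the one already stressed after Statement~\ref{pextend}: in $\mathrm{RCA}_0$ one cannot in general extend an atomic diagram to a full valuation $V$ on all sentences, since the quantifier clause needs arithmetical truth. To avoid this, we restrict the valuation, in the spirit of the step-by-step constructions of $F,G,H,V$ in Section 2. Only finitely many axioms $\psi_1,\dots,\psi_n$ matter, and they have bounded quantifier rank. So it suffices to have a partial valuation on sentences of rank at most $d$ that meets the Tarski clauses there, which a suitable bounded-rank form of soundness can use. For this structure, each quantifier over a formula already evaluated can be decided effectively. Every element has finitely many $r$-neighbours, computable from its code, and the relevant truth conditions are verified by explicit witnesses; for example, $\forall x\exists y$ axioms have witness $y=\langle0,x+1\rangle$ or $\langle1,x,y\rangle$. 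More robustly, we will show that the formulas involved reduce, uniformly in parameters, to bounded formulas. The plan is to prove by $\Sigma^0_1$-induction on $d$ that a valuation for rank at most $d$ exists, and then to apply soundness restricted to proofs whose formulas have rank at most $d$. If this fails, the fallback is a purely syntactic cut-free or Herbrand-style argument that $\mathrm{Q}_{K}$ has no finite refutation, which can also be carried out in $\mathrm{RCA}_0$ for an open-like theory.

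Finally, we check each axiom $(0)$--$(25)$ in $\mathcal{N}$ directly. Exclusivity $(0)$ and the uniqueness of zero $(1)$ are immediate from the coding. The function and uniqueness axioms $(2)$--$(18)$ hold because each event is indexed by its arguments $(x,y)$, and its argument and result nodes have exactly one number successor. The Peano clauses $(19)$--$(25)$ reduce to the true identities $x+0=x$, $x+(y+1)=(x+y)+1$, $x\cdot0=0$ and $x(y+1)=xy+x$. These are provable in $\mathrm{RCA}_0$ once we note that $\mathrm{Add}(x,y,z,\dots)$ holds in $\mathcal{N}$ iff $z=x+y$ and the witnesses are the canonical nodes.
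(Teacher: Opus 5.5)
There is a genuine gap in your main line: the step ``prove by $\Sigma^0_1$-induction on $d$ that a valuation for rank at most $d$ exists, and then apply soundness restricted to proofs whose formulas have rank at most $d$'' does not work, for two reasons.

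First, a Hilbert-style refutation (modus ponens is the only rule) can pass through formulas of arbitrarily large quantifier rank, unrelated to the rank of the axioms. So $d$ would have to be the rank of the given refutation, which you do not control.

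Second, for large $d$ the valuation you need does not exist in $\mathrm{RCA}_0$. Your structure interprets $(\mathbb{N},+,\cdot)$, and $V$ must be defined on sentences with constants $\overline{m}$ for all elements. Take a universal Diophantine predicate, written with $\mathrm{Add}$ and $\mathrm{Mult}$ as $\exists\vec x\,U(\overline{\langle 0,e\rangle},\vec x)$; these sentences have a fixed standard rank $d_0$. Their truth values, uniformly in $e$, decide the halting problem. Hence no such valuation is computable, and $\mathrm{RCA}_0$ cannot prove it exists, since it fails in the $\omega$-model of computable sets. Moreover, ``there is a valuation for rank $\le d$'' is a $\Sigma^1_1$ statement, so $\Sigma^0_1$-induction does not apply to it anyway. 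Your claim that ``each quantifier over a formula already evaluated can be decided effectively'' is therefore false for arbitrary formulas. It holds only for the specific formulas coming from the axioms.

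The missing idea is the one you relegate to a fallback: cut elimination (equivalently Herbrand's theorem), which is formalizable in $\mathrm{RCA}_0$. The paper uses it in packaged form as Simpson's strong soundness theorem: if $X$ has a \emph{weak model}, then $X$ is consistent. A weak model is a structure whose valuation satisfies the Tarski clauses only on the closure of $X$ under subformulas and substitution of constants $\overline{m}$. Because a cut-free refutation involves only such subformula instances, this partial valuation is all you need, not one on all sentences of rank $\le d$.

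Your structure (or the paper's, with domain $\mathbb{N}$) yields such a weak model. For the finitely many axioms $(0)$--$(25)$, every parameter instance of a subformula reduces to a decidable condition, using the canonical witnesses and the finitely many computable $r$-successors of each element. So the valuation on this closure exists by $\Delta^0_1$-comprehension, and your axiom-by-axiom check then completes the proof. Concretely: replace the rank-$d$ valuations and ``bounded-rank soundness'' by the subformula-closure valuation together with strong soundness.
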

\begin{proof}
By the strong soundness theorem of first-order logic (see Simpson~\cite{MR2517689}), it is enough to show that $\mathrm{Q}_{K}$ has a weak model on $\mathcal{L}_{\mathcal{FO}}$. Let the domain of this model be $\mathbb{N}$, and consider an evaluation function restricted to the subformulas of $\mathrm{Q}_{K}$. Here, we assign the truth value to all formulas obtained by translating $\mathrm{Q}_{rel}$ into the language $\mathcal{L}_{\mathcal{FO}}$ using the predicates employed to construct $\mathrm{Q}_{K}$. This evaluation function can be constructed by using $\Sigma^0_1$-induction.
\end{proof}

We consider the $\Sigma_1$-completeness of $\mathrm{Q}_{K}$. For an $\mathcal{L}_{1}$-formula $\varphi$, let $\varphi^{\pi_{Q_{K}}}$ denote its interpretation via the predicates used in the construction of $\mathrm{Q}_{K}$.

\begin{lem}[The $\Sigma_1$-completeness of $\mathrm{Q}_{K}$, $\mathrm{RCA}_0$]\label{SgcomQK}
Let $\Sigma_{1}$-sentence. $\varphi$. If $\varphi$ holds, then $\mathrm{Pbl}_{\mathcal{FO}}\bigl(\mathrm{Q}_{K}, \varphi^{\pi_{Q_{K}}}\bigr)$.
\end{lem}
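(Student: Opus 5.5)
The plan is to prove the statement in two stages. First we prove $\Sigma_1$-completeness for the usual relational Robinson arithmetic $\mathrm{Q}_{rel}$ (in the language $\mathcal{L}_1$). Then we push the resulting derivations through the interpretation $\pi_{Q_K}$, using the fact that each axiom of $\mathrm{Q}_{rel}$ is sent to a sentence provable from $\mathrm{Q}_K$ (indeed, essentially to one of the axioms $(0)$–$(25)$).

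Concretely, write the true $\Sigma_1$-sentence as $\varphi\equiv\exists\vec{y}\,\theta(\vec{y})$ with $\theta$ bounded. Fix witnesses $\vec{n}$ with $\theta(\vec{n})$ true. It then suffices to derive $\theta(\vec{n})^{\pi_{Q_K}}$ from $\mathrm{Q}_K$, followed by existential introduction.

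The core is the following claim, proved by induction in $\mathrm{RCA}_0$ ($\mathrm{I}\Sigma^0_1$). There is a primitive recursive, hence $\Delta^0_1$-definable, map sending each bounded sentence true in $\mathbb{N}$ (with numerals as parameters) to a $\mathrm{Q}_K$-proof of its translation. The pieces are as follows.
\begin{itemize}
\item Numerals. For each $n$, the formula ``$x$ is the $n$-th element'' is expressed as $\exists x_0\dots x_n(P_\star(x_0)\land P_0(x_0)\land\bigwedge\mathrm{Succ}(x_i,x_{i+1})\land x=x_n)$. Axioms $(1),(2)$ prove existence and uniqueness of such an $x$.
\item Atomic facts. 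If $n+m=k$ in $\mathbb{N}$, then $\mathrm{Q}_K$ proves $\mathrm{Add}$ holds for the numeral elements. This goes by recursion on $m$ using $(9),(10),(22),(23)$, and similarly for $\mathrm{Mult}$ using $(17),(18),(24),(25)$. Distinct numerals are proved distinct via $(19),(20)$.
\item Bounded quantifiers. We use that $\mathrm{Q}_K$ proves $\forall x(x\le\bar n\rightarrow x=\bar 0\lor\dots\lor x=\bar n)$. Here $\le$ is defined through $\mathrm{Add}$, and the proof uses $(21),(22),(23)$ by induction on $n$ outside the theory.
\end{itemize}
Each item amounts to constructing explicit proof codes whose length is bounded by a primitive recursive function of the input. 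The induction establishing that these codes are correct proofs is a $\Sigma^0_1$ (actually $\Delta^0_0$ in the code) induction, which is available in $\mathrm{RCA}_0$. The claim is then discharged by induction on the construction of bounded formulas, again in $\mathrm{I}\Sigma^0_1$, since ``$\mathrm{Pbl}_{\mathcal{FO}}(\mathrm{Q}_K,\cdot)$ of a specified code'' is checked by a primitive recursive function.

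The main obstacle I expect is the bounded-quantifier step. Axiom $(21)$ only gives that every nonzero element is a successor. Deriving $x\le\bar n\rightarrow\bigvee_{i\le n}x=\bar i$ requires an ordering, and in relational $\mathrm{Q}$ the ordering is usually taken as $\exists z(z+x=\bar n)$; making the standard $\mathrm{Q}$ argument go through then needs the commuted form of addition. To avoid this I would define $x\le y$ as $\exists z(x+z=y)$. Then $x+z=\bar n$ is handled by a case split on $z=0$ or $z=S(z')$ via $(21)$, $(22)$, $(23)$, $(20)$, together with induction on $n$ at the metalevel, which gives exactly the needed disjunction. I would first check this in ordinary $\mathrm{Q}$ and then transcribe it, event by event, into the $\mathrm{Add}$-encoding, using $(8),(10)$ to identify events.
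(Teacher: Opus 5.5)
Your proposal is correct and follows the same route as the paper. The paper's proof just combines the $\mathrm{I}\Sigma^0_1$-provable $\Sigma_1$-completeness of $\mathrm{Q}$ (H\'ajek--Pudl\'ak) with the fact that the axioms of $\mathrm{Q}_{rel}$ translate into (essentially) axioms of $\mathrm{Q}_K$, and you unpack that citation into an explicit primitive recursive construction of $\mathrm{Q}_K$-proofs whose correctness is checked in $\mathrm{I}\Sigma^0_1$; one small correction: the usual definition $x\le y:\equiv\exists z(z+x=y)$ does not need commuted addition either, since you split on $x$ rather than $z$ (using $x=0\lor\exists x'(x=S(x'))$ and $z+S(x')=S(z+x')$), so your change of definition is harmless but unnecessary, provided the same $\le$ is used in $\mathrm{Q}_{rel}$ and in its translation.
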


\begin{proof}
This follows from the fact that $\mathrm{Q}_{K}$ is an interpretation of $\mathrm{Q}_{rel}$ and that the $\Sigma_1$-completeness of $\mathrm{Q}$ is provable in $\mathrm{I}\Sigma^0_1$ (see Hájek and Pudlák~\cite{MR1219738}).
\end{proof}

\begin{state}[Weak $\Sigma^0_1$-separation~\cite{MR2517689}]
Let $\varphi(x)$ and $\psi(x)$ be $\Sigma^{0}_{1}$-formulas without set parameters, in which $X$ does not occur as a free variable. If $\neg\exists n(\varphi(n)\land\psi(n))$, then
\begin{align*}
\exists X\forall n((\varphi(n)\rightarrow n\in X)\land(\psi(n)\rightarrow n\not\in X)).
\end{align*}
\end{state}

Last, we show that the hybrid form is equivalent to the weak $\Sigma^0_1$-separation over $\mathrm{RCA}_0$. In fact, over $\mathrm{RCA}_0$, these claims are strictly stronger than $\mathrm{RCA}_0$ but strictly weaker than $\mathrm{WKL}_0$.

\begin{thm}
The following are equivalent over $\mathrm{RCA}_0$:
\begin{enumerate}
    \item The van Benthem characterization theorem (the hybrid form),
    \item Weak Completeness Theorem for $\mathcal{FO}$,
    \item Weak $\Sigma^0_1$-separation.
\end{enumerate}
\end{thm}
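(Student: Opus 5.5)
(1)$\Leftrightarrow$(2) is Proposition~\ref{vanBequivWC}, so it remains to show (2)$\Leftrightarrow$(3) over $\mathrm{RCA}_0$.

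For (3)$\Rightarrow$(2), we follow Simpson's proof of the completeness theorem, but only for a finite $T$. Let $T$ be a finite set of sentences and $\psi$ a sentence such that $\neg\mathrm{Pbl}_{\mathcal{FO}}(T\cup\{\neg\psi\},\bot)$; we want a model of $\bigwedge T\land\neg\psi$. Everything is lightface: $\phi=\bigwedge T\land\neg\psi$ is a single number, so the sets we need are $\Sigma^0_1$-definable without set parameters. Concretely, we fix the standard Henkin enumeration $\theta_0,\theta_1,\dots$ of sentences of the Henkin-extended language. The goal is a completion $X$ of $\{\phi\}$ plus the Henkin axioms that is consistent, complete and Henkin. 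For this we apply weak $\Sigma^0_1$-separation to the pair
\[
\varphi(n)\equiv \mathrm{Pbl}(\{\phi\}\cup H,\theta_n),\qquad \psi'(n)\equiv\mathrm{Pbl}(\{\phi\}\cup H,\neg\theta_n),
\]
where $H$ is the primitive recursive set of Henkin axioms. These two sets are disjoint by the consistency of $\phi$ together with $H$, which follows from the standard conservativity of Henkin axioms and is provable in $\mathrm{RCA}_0$.

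A single separating set is not obviously a complete consistent theory. So instead we separate the two $\Sigma^0_1$ sets of codes of finite partial assignments: those that are ``provably refuted'' versus those that are extended along a canonical path. Alternatively, we use the known fact (Simpson, IV.4) that weak $\Sigma^0_1$-separation is equivalent to the existence of a path through every infinite \emph{computable} (parameter-free) binary tree. We then apply that to the lightface tree of finite truth assignments to $\theta_0,\dots,\theta_{n-1}$ not refuted by proofs of length $<n$. This tree is infinite because $\phi+H$ is consistent. A path gives a complete Henkin theory, and from it $\mathrm{RCA}_0$ builds the term model, i.e.\ a standard model in the sense of the paper, by $\Delta^0_1$ comprehension, exactly as in Simpson's proof. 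I would cite this lightface tree formulation rather than reprove it.

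For (2)$\Rightarrow$(3), we use $\mathrm{Q}_K$. Take parameter-free $\Sigma^0_1$ formulas $\varphi(n)=\exists s\,\varphi_0(n,s)$ and $\psi(n)=\exists s\,\psi_0(n,s)$ with no common solution. We introduce a fresh unary predicate $P_9$ (or use the arithmetic numerals) and form the single sentence
\[
\chi\equiv \bigwedge \mathrm{Q}_K\land\forall x\,\bigl(\text{``}\exists s\le x\,\varphi_0(x,s)\land\forall t<s\,\neg\psi_0(x,t)\text{''}\to P_9(x)\bigr)\land\forall x\,\bigl(\text{``}\exists s\le x\,\psi_0(x,s)\land\forall t\le s\,\neg\varphi_0(x,t)\text{''}\to\neg P_9(x)\bigr),
\]
formalized via $\pi_{Q_K}$, using witness comparison as in the Rosser trick. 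Note that $\mathrm{Q}_K$ is finite, so $\chi$ is one sentence. We claim $\chi$ is consistent. If $\chi\vdash\bot$, the usual Rosser argument with Lemma~\ref{SgcomQK} turns the proof into a number $n$ with $\varphi(n)\land\psi(n)$, and this can be carried out in $\mathrm{I}\Sigma^0_1$. Lemma~\ref{QKcon} handles the $\mathrm{Q}_K$ part. By (2) (in the form ``consistent finite theory has a model''), there is a standard model $M$ of $\chi$. We then set
\[
X=\{n: M\vDash P_9(\underline{n})\},
\]
where $\underline{n}$ is the element reached from the $P_0$-node by $n$ applications of $\mathrm{Succ}$. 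This set is $\Delta^0_1$ in $M$, because $\mathrm{Succ}$ is functional in $M$ and the successor walk can be computed by primitive recursion from the valuation. If $\varphi(n)$ holds, then Lemma~\ref{SgcomQK} gives that $\mathrm{Q}_K$ proves the true $\Sigma_1$ Rosser instance, so $M\vDash P_9(\underline{n})$ by soundness within $M$; symmetrically for $\psi$. Hence $X$ separates.

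The main obstacle is the direction (2)$\Rightarrow$(3): setting up the Rosser-style sentence inside the relational language of $\mathrm{Q}_K$ and verifying, in $\mathrm{RCA}_0$ only, both that it is consistent and that truth of the relevant $\Sigma_1$ instances transfers to the nonstandard-free numerals of a standard model. I would first try to reduce everything to Lemmas~\ref{QKcon} and~\ref{SgcomQK} plus the soundness theorem, which is available in $\mathrm{RCA}_0$.
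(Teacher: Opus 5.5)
There is a genuine gap in (2)$\Rightarrow$(3), at the claim that $\chi$ is consistent. Your handling of (1)$\Leftrightarrow$(2) is fine. So is (3)$\Rightarrow$(2): the paper only cites Simpson there, and your route through paths in parameter-free infinite trees is a legitimate way to fill in that citation. Weak $\Sigma^0_1$-separation yields such paths by separating the nodes whose left child leaves the tree strictly before the right one from those where the reverse happens. The abandoned first attempt can be deleted.

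Write $\rho(x)\equiv\exists s(\varphi_0(x,s)\land\forall t<s\,\neg\psi_0(x,t))$ and $\rho'(x)\equiv\exists s(\psi_0(x,s)\land\forall t\le s\,\neg\varphi_0(x,t))$. The bound $\exists s\le x$ in your $\chi$ must go: with it, $P_9(n)$ is unconstrained whenever every witness for $\varphi(n)$ exceeds $n$, and separation fails.

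Since $P_9$ is fresh, replacing $P_9(x)$ by $\rho(x)$ turns any refutation of $\chi$ into a $\mathrm{Q}_K$-proof of the $\Sigma_1$ sentence $\exists x(\rho(x)\land\rho'(x))$. Ruling that out is a $\Sigma_1$-\emph{soundness} (reflection) property of $\mathrm{Q}_K$. That is the converse of Lemma~\ref{SgcomQK}, not something the lemma gives you.

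A Rosser argument does not help either. It excludes $\rho(x)\land\rho'(x)$ only when the two witnesses are comparable, and $\mathrm{Q}$ proves comparability only when one of them is a numeral. So nothing in $\mathrm{Q}_K$ stops a nonstandard $x$ from satisfying both.

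Lemma~\ref{QKcon} gives the consistency of $\mathrm{Q}_K$ alone. Its weak-model argument does not extend to $\chi$, because evaluating $\rho(n)$ on $\mathbb{N}$ is a $\Sigma^0_1$ question that $\mathrm{RCA}_0$ cannot settle. You would therefore need a separate proof-theoretic argument (Herbrand's theorem or cut elimination), and your sketch contains none.

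The repair, which is the paper's argument, is to drop $P_9$ entirely. Take any model $M$ of $\mathrm{Q}_K$, obtained from (2) and Lemma~\ref{QKcon}; the paper phrases this via the complete theory $T_{\mathrm{Q}_K}$ of $M$. Put $X=\{n : M\vDash\rho(\underline{n})\}$.

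\begin{itemize}
\item If $\varphi(n)$ holds, then $\rho(n)$ is a true $\Sigma_1$ sentence because $\neg\psi(n)$. So $\mathrm{Q}_K$ proves it by Lemma~\ref{SgcomQK}, and $n\in X$ by soundness.
\item If $\psi(n)$ holds, pick a witness $k$. Then $\mathrm{Q}_K$ proves the true $\Delta_0$ facts $\psi_0(\underline{n},\underline{k})$ and $\bigwedge_{i\le k}\neg\varphi_0(\underline{n},\underline{i})$. Together with the $\mathrm{Q}$-provable facts $\forall s(s\le\underline{k}\lor\underline{k}<s)$ and $\forall s(s\le\underline{k}\rightarrow\bigvee_{i\le k}s=\underline{i})$, these give $\mathrm{Q}_K\vdash\neg\rho(\underline{n})$, so $n\notin X$.
\end{itemize}

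Here the Rosser comparison is only ever made against a numeral. That is exactly why nothing beyond the consistency of $\mathrm{Q}_K$ is needed.
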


\begin{proof}
By Proposition~\ref{vanBequivWC}, $(1)\Leftrightarrow (2)$. Given the connection between $\mathrm{WKL}_0$ and G\"odel's completeness theorem, $(3)$ implies $(2)$ (See Simpson~\cite{MR2517689}). We show that $(2)$ implies $(3)$. By Lemma~\ref{QKcon} and Weak Completeness Theorem for $\mathcal{FO}$, $\mathrm{Q}_{K}$ has a model $M$ on $\mathcal{L}_{\mathcal{FO}}$. Let $T_{\mathrm{Q}_{K}}=\{\varphi\in\mathrm{Snt}_{\mathcal{FO}}\mid M\vDash\varphi\}$. Then $T_{\mathrm{Q}_{K}}$ is a consistent completion of $\mathrm{Q}_{K}$. By Lemma~\ref{SgcomQK}, the desired set $X$ can be constructed using $T_{\mathrm{Q}_{K}}$.
\end{proof}

\section*{Acknowledgements}
The work of the second author is partially supported by
 JSPS KAKENHI grant numbers JP21KK0045, JP23K03193, and 26K00615.

%\section{Some properties for modal logic in second-order arithmetic}

%\section{Open questions}

\bibliography{refmo}
\begin{comment}
\newpage

\renewcommand{\thesection}{\Alph{section}}
\setcounter{section}{1}

\section*{Appendix A: Some proofs}

\subsection{Proofs of Lemmas in Section~3}

%\subsection{Proofs of Lemmas in Section~4}

%\subsection{Proof of Lemma~\ref{frightk}}

\stepcounter{section}
\end{comment}
\end{document}